\newcommand{\mybox}{\collectbox{\setlength{\fboxsep}{1pt}\fbox{\BOXCONTENT}}}
\def\@settitle{\begin{center}\baselineskip14\p@\relax
    \bfseries \@title
  \end{center}}
\def\@setauthors{\begingroup
  \def\thanks{\protect\thanks@warning}\trivlist
  \centering\footnotesize \@topsep30\p@\relax
  \advance\@topsep by -\baselineskip
  \item\relax
  \author@andify\authors
  \def\\{\protect\linebreak}{\authors}\ifx\@empty\contribs
  \else
    ,\penalty-3 \space \@setcontribs
    \@closetoccontribs
  \fi
  \endtrivlist
  \endgroup
}
\newif\ifanindex
\newcommand{\Bv}{\mathbf{v}}
\newtheorem{itheorem}{Theorem}
\newtheorem{icor}[itheorem]{Corollary}
\newtheorem{Theorem}{Theorem}[section]
\newtheorem{Proposition}[Theorem]{Proposition}
\newtheorem{Lemma}[Theorem]{Lemma}
\newtheorem*{Question}{Question}
\newtheorem{Corollary}[Theorem]{Corollary}
\newtheorem{Conjecture}[Theorem]{Conjecture}
\newtheorem{Definition}[Theorem]{Definition}
\newtheorem{Example}[Theorem]{Example}
\newtheorem{Remark}[Theorem]{Remark}
\numberwithin{equation}{section}
\renewcommand{\theequation}{\arabic{section}.\arabic{equation}}
\tikzset{wei/.style={draw=red,double=red!40!white,double distance=1.5pt,thin}}
\newcounter{subeqn}
\renewcommand{\thesubeqn}{\theequation\alph{subeqn}}
\newcommand{\subeqn}{\refstepcounter{subeqn}\tag{\thesubeqn}}
\newcommand{\newseq}{\refstepcounter{equation}}
\newcommand{\nc}{\newcommand}
\newcommand{\renc}{\renewcommand}
\nc{\bla}{{\boldsymbol{\la}}}
\nc{\mmod}{\operatorname{-mod}}
\nc{\h}{\mathfrak h}
\nc{\K}{\mathbbm{k}}
\nc{\g}{\mathfrak g}
\nc{\ft}{\mathfrak t}
\nc{\fM}{\mathfrak M}
\nc{\bM}{\mathbf M}
\nc{\bR}{\mathbf R}
\nc{\Bm}{\mathbf m}
\nc{\bS}{\mathbf S}
\nc{\bT}{\mathbf T}
\nc{\bU}{\mathbf U}
\nc{\bV}{\mathbf V}
\nc{\bi}{\mathbf i}
\nc{\bp}{\mathbf p}
\nc{\barQ}{\bar{Q}}
\nc{\barP}{\bar{P}}
\nc{\barX}{\bar{X}}
\nc{\hsigma}{\hat{\sigma}}
\nc{\TL}{\tilde{\mathscr{T}}_{\mathcal{L}}}
\nc{\bs}{\mathbf s}
\renc{\C}{\mathbb C}
\nc{\Sym}{\operatorname{Sym}}
\nc{\acham}{\eta}
\nc{\tU}{\mathcal{U}}
\nc{\PolKLR}{\mathsf{Pol}}
\nc{\BY}{\mathbf{Y}}
\nc{\longi}{{\boldsymbol{\ell}}}
\nc{\red}{\operatorname{red}}
\nc{\ind}{\operatorname{ind}}
\nc{\yz}{z}
\nc{\YZ}{Z}
\nc{\Z}{\mathbb Z}
\nc{\R}{\mathbb R}
\nc{\N}{\mathbb N}
\nc{\B}{\mathcal B}
\nc{\M}{\mathcal M}
\nc{\cE}{\mathcal E}
\nc{\cF}{\mathcal F}
\nc{\fB}{\mathfrak B}
\nc{\con}{\sim}
\nc{\indices}{\Sigma}
\nc{\pgl}{\mathfrak{pgl}}
\nc{\ev}{\mathsf{ev}}
\nc{\Hom}{\operatorname{Hom}}
\nc{\End}{\operatorname{End}}
\nc{\res}{\operatorname{res}}
\nc{\al}{\alpha}
\nc{\Stein}{\mathbb{X}}
\nc{\pStein}{\mathbb{Y}}
\nc{\vp}{\varphi}
\nc{\Cth}{S_h}
\nc{\cO}{\mathcal{O}}
\nc{\fg}{\mathfrak{g}}
\nc{\one}{\mathbf{1}}
\nc{\bb}{\mathbf{b}}
\nc{\ext}{\operatorname{Ext}}
\nc{\out}{\operatorname{out}}
\nc{\FY}{FY}
\nc{\ep}{\epsilon}
\nc{\bz}{{\mathbf z}}
\nc{\inn}{\operatorname{in}}
\nc{\BK}{{\reflectbox{\rm R}}}
\nc{\Bi}{\mathbf{i}}
\nc{\Ba}{\mathbf{a}}
\nc{\Bj}{\mathbf{j}}
\nc{\Wei}{\EuScript{W}}
\nc{\Bb}{\mathbf{b}}
\nc{\Bnu}{{\boldsymbol{\nu}}}
\nc{\quiver}{I}
\nc{\tGamma}{\tilde{\Gamma}}
\nc{\tGammabR}{\tGamma_{\bR}}
\nc{\GammabR}{\Gamma_{\bR}}
\nc{\diam}{\diamond}
\nc{\la}{\lambda}
\nc{\Yml}{Y_\mu^\lambda}
\nc{\FYml}{FY_\mu^\lambda}
\nc{\bgam}{{\boldsymbol{\gamma}}}
\nc{\blam}{{\boldsymbol{\lambda}}}
\nc{\gr}{\operatorname{gr}}
\nc{\Spec}{\operatorname{Spec}}
\nc{\Stendhal}{Stendhal\xspace}
\nc{\Tsetlin}{\foreignlanguage{russian}{Цетлин}\xspace}
\nc{\GT}{Gelfand-Tsetlin\xspace}
\nc{\GTc}{\mbox{\rm\foreignlanguage{russian}{ГЦ}}}
\nc{\MaxSpec}{\operatorname{MaxSpec}}
\nc{\Cartan}{\C[H_\bullet^{(\bullet)}]}
\nc{\tmetric}{\mathscr{\tilde T}}
\nc{\metric}{\mathscr{T}}
\nc{\pmmetric}{{}_{\pm}\mathscr{T}}
\nc{\pmetric}{{}_{+}\mathscr{T}}
\nc{\mmetric}{{}_{-}\mathscr{T}}
\nc{\Pol}{\mathsf{Pol}}
\nc{\hh}{h}
\nc{\wtmodY}{{Y^\la_\mu\operatorname{-wtmod}}}
\nc{\wtmodFY}{{FY^\la_\mu\operatorname{-wtmod}}}
\nc{\wtmodBK}{{\BK\operatorname{-wtmod}}}
\nc{\fdFY}{{FY^\la_\mu\operatorname{-mod}_{\operatorname{fd}}}}
\nc{\OFY}{{FY^\la_\mu{\text{-}\cO}}}
\nc{\fdY}{{Y^\la_\mu\operatorname{-mod}_{\operatorname{fd}}}}
\nc{\OY}{{Y^\la_\mu{\text{-}\cO}}}
\nc{\yMon}{\mathsf{a}}
\nc{\zMon}{\mathsf{b}}
\nc{\GL}{\mathcal{GL}}
\nc{\supp}{\operatorname{supp}}
\nc{\calL}{\mathcal{L}}
\nc{\calK}{\mathcal{K}}
\nc{\calF}{\mathcal{F}}
\nc{\mla}{\mathfrak{m}}
\nc{\nla}{\mathfrak{n}}
\newcommand{\arxiv}[1]{\href{http://arxiv.org/abs/#1}{\tt arXiv:\nolinkurl{#1}}}
\nc{\Gr}{\mathsf{Gr}}
\nc{\Grlmbar}{\Gr^{\overline{\lambda}}_\mu}
\nc{\excise}[1]{}
\title{Gelfand-Tsetlin modules in
  the Coulomb context}
\author{Ben Webster}
\address{B.~Webster: Department of Pure Mathematics, University of Waterloo \&
Perimeter Institute for Theoretical Physics, Canada}
\email{ben.webster@uwaterloo.ca}
\begin{document}
\ifanindex
\else 
\newcommand{\notation}[2]{}
\fi
\begin{abstract}
 This paper gives a new perspective on the theory of principal Galois orders, as developed by Futorny, Ovsienko, Hartwig, and others.  Every principal Galois order can be written as $eFe$ for any idempotent $e$ in an algebra $F$, which we call a flag Galois order; and in most important cases we can assume that these algebras are Morita equivalent.  These algebras have the property that the completed algebra controlling the fiber over a maximal ideal has the same form as a subalgebra in a skew group ring, which gives a new perspective to a number of results about these algebras.

 We also discuss how this approach relates to the study of Coulomb branches in the sense of Braverman-Finkelberg-Nakajima, which are particularly beautiful examples of principal Galois orders.  These include most of the interesting examples of principal Galois orders, such as $U(\mathfrak{gl}_n)$.  In this case, all the objects discussed have a geometric interpretation, which endows the category of Gelfand-Tsetlin modules with a graded lift and allows us to interpret the classes of simple Gelfand-Tsetlin modules in terms of dual canonical bases for the Grothendieck group.  In particular, we classify the Gelfand-Tsetlin modules over $U(\mathfrak{gl}_n)$ and relate their characters to a generalization of Leclerc's shuffle expansion for dual canonical basis vectors.

 Finally, as an application, we disprove a conjecture of Mazorchuk, showing that the fiber over a maximal ideal of the Gelfand-Tsetlin subalgebra appearing in a finite-dimensional representation has an infinite-dimensional module in its fiber for $n\geq 6$. 
\end{abstract}

\maketitle

\section{Introduction}
\label{sec:introduction}
Let $\Lambda$ be a Noetherian commutative ring and $\widehat{W}$ a
monoid acting faithfully on $\Lambda$; let
$L=\operatorname{Frac}(\Lambda)$ be the fraction field of $\Lambda$.  Assume that $\widehat{W}$ is
the semi-direct product of a finite subgroup $W$ and a submonoid
$\mathcal{M}$ and that $\# W$ is invertible in $\Lambda$.  For simplicity, we assume throughout the introduction
that $\mathcal{M}$ has finite stabilizers in its action on $\MaxSpec(\Lambda)$.

\notation{$\Lambda$}{A Noetherian commutative ring.}
\notation{$L$}{The fraction field of $L$.}
\notation{$\widehat{W}$}{A monoid with a faithful action on $\Lambda$, which is the semi-direct product of a finite subgroup $W$ and a submonoid $ \mathcal{M}$.}
\notation{$\Gamma$}{The invariants $\Lambda^W$.}
\notation{$K$}{The fraction field of  $\Gamma$.}

A {\bf principal Galois order}
(Def. \ref{def:PGO}) $U$ is a subalgebra of invariants $(L\#\mathcal{M})^W$ of the skew group ring $L\#\mathcal{M}$ equipped with
(among other structures)
an inclusion of $\Gamma=\Lambda^W$ as a subalgebra (usually called the
Gelfand-Tsetlin subalgebra) and a faithful action on
$\Gamma$.

We call a finitely generated $U$-module {\bf Gelfand-Tsetlin} if it is
locally finite under the action of $\Gamma$, and thus decomposes as a
direct sum of generalized weight spaces.  An important motivating
question for a great deal of work in recent years has been the
question:
\notation{$U$}{A principal Galois order.}
\begin{Question}\label{question}
  Given a principal Galois order $U$, classify the simple Gelfand-Tsetlin
  modules and describe the dimensions of their generalized weight spaces for the different maximal ideals of $\Gamma$.
\end{Question}
\subsection{Generalities on Galois orders}
Work of Drozd-Futorny-Ovsienko \cite[Th. 18]{FOD} shows that the simple GT modules containing a maximal ideal $\mathsf{m}_\gamma$ of $\Gamma$ in their support (the
{\bf fiber} over $\gamma$) are
controlled by a profinite length algebra
$ {\widehat{U}_\gamma}$, which naturally acts on the
corresponding generalized weight space for any $U$-module.  The
simple discrete modules over $\widehat{U}_\gamma $ are the non-zero $\gamma$-generalized weight spaces of the
different simple Gelfand-Tsetlin modules.  Thus, we can rephrase the above question as how to understand these algebras in specific special cases.
\notation{$ {\widehat{U}_\gamma}$}{The algebra controlling the Gelfand-Tsetlin modules with non-zero $\gamma$ weight space.}

One perspective shift we want to strongly emphasize is that taking
invariants for a group action conceals a great deal of structure---we can see this structure more clearly if we instead consider subalgebras $F$ in the skew group ring $L\#\widehat{W}$ with coefficients in $L$ of the semi-direct
product $\widehat{W}=W\ltimes \mathcal{M}$,
which we call {\bf principal flag orders} (Def. \ref{def:FGO}). These
are simply the principal Galois orders containing the smash product
$\Lambda\# W$ where we take $W'=\{1\}$ and
$\mathcal{M}'=\widehat{W}$.
\notation{$F$}{A principal flag order.}

If we let $e\in \Z[\frac{1}{\#W}][W]$ be the symmetrization
idempotent, then for any principal flag order $F$, the centralizer
$U=eFe$ is a principal Galois order for our original data, and every principal Galois order
appears this way (Lemma~\ref{lem:FD}).  One theme we will use throughout the paper is the interplay between a maximal ideal $\mathfrak{m}_\la\subset \Lambda$, and the maximal ideal 
$\mathsf{m}_\gamma=\mathfrak{m}_\la\cap \Gamma$ lying under it in $\Gamma$.

\newcommand{\What}{\widehat{W}}
\notation{$\mathfrak{m}_\la$}{A maximal ideal in $\Lambda$.}
\notation{${\widehat{F}_\la}$}{The endomorphism algebra of the weight functor $\Wei_{\la}$.}
\notation{${\widehat{W}_\la}$}{The stabilizer of $\la$ in $\What$.}
\notation{${{W}_\la}$}{The stabilizer of $\la$ in $W$.}
\newcommand{\Lambdahat}{\widehat{\Lambda}}

Applying the
results of \cite{FOD} to  $F$  in this situation, we have an algebra
$ {\widehat{F}_\la}$ which plays the same role as $ {\widehat{U}_\gamma}$, controlling the GT modules in this fiber.   One of the advantages of this approach is that the algebra $\widehat{F}_\la$  has the same flavor as $F$ itself, but with the group $\widehat{W}$ replaced by the stabilizer of $\la$ in this group.  Let $ {\widehat{W}_\la}\subset \What$ be the stabilizer of
$\la\in \MaxSpec(\Lambda)$ and let $ {W_\la}$ be the stabilizer of $\la$ in $W$. Let $\Lambdahat_{\la}$ be the completion of
$\Lambda$ with respect to this maximal ideal and $\widehat{L}_\la$ the fraction field of this completion.  Consider the symmetrizing idempotent
$e_\la$ in $\Z[\frac{1}{\#W}][ {W_\la}]$. 

\begin{itheorem}[Propositions \ref{prop:Fla-order}, \ref{prop:fla-complete} \& Lemma \ref{lem:U-F}]
 The algebra
$ {\widehat{F}_\la}$ is a principal flag order for the ring $\Lambdahat$ and the group $ {\widehat{W}_\la}$, that is, it is a subalgebra of the skew group ring $\widehat{L}_{\la}\#
   {\widehat{W}_\la}$ such that $\widehat{F}_\la\otimes_{\widehat{\Lambda}_{\la}}\widehat{L}_{\la}\cong \widehat{L}\#
  \widehat{W}_\la$, with an induced action on $\widehat{\Lambda}$.
  
  Furthermore, we have a natural isomorphism \[ {\widehat{U}_\gamma}=e_\la {\widehat{F}_\la}e_\la.\] In particular, if $W_\la=\{1\}$, these algebras are isomorphic. 
\end{itheorem}

 \notation{$\widehat{\Lambda}_\la$}{The invariants of $\widehat{W}_\la$ acting on $\Lambdahat$}
 \notation{$F^{(1)}_\la$}{The quotient of $F_{\la}$ by the maximal ideal of $\Lambdahat$.}
By \cite[Th. 4.1(4)]{FOgalois}, the center of $ {\widehat{F}_\la}$ is the subalgebra of invariants
$ {\widehat{\Lambda}_\la}=\Lambdahat^{\widehat{W}_\la}$
and any simple module over $\widehat{F}_\la$ will
factor through the quotient $ {F^{(1)}_\la}$ by the
unique maximal ideal of the center.  Thus, this gives a canonical way to choose a finite-dimensional quotient of $ {\widehat{F}_\la}$ through
which all simples factor.

\subsection{The reflection case}
The situation will be simpler if we work in the context (studied
in \cite[\S 4.1]{Hartwig}  and 
\cite{FGRZGalois}) where we assume that:
\begin{itemize}
\item[$(\star)$] The algebra $\Lambda$ is the symmetric algebra on a vector space $V$,
  the group $W$ is a complex reflection group acting on $V$,
  $\mathcal{M}$ is a subgroup of translations, and $F$ is free as a
  left $\Lambda$-module.
\end{itemize}

In this case, we can always choose $F$ so that $U$ and $F$
are Morita equivalent via the bimodules $eF$ and $Fe$, and the
dimension of $ {F^{(1)}_\la}$ is easy to calculate: it is
just $(\# {\widehat{W}_\la})^2$.  Furthermore, the
quotient by the maximal ideal $ {\mathfrak{m}_\la}$ has
dimension $\# {\widehat{W}_\la}$ and has every simple
module as a quotient. In particular, the sum of the dimensions of the $\lambda$-generalized
  weight space for all simple \GT -modules is $\leq
  \# {\widehat{W}_\la}$.

  If we consider how the results apply to $ {\widehat{U}_\gamma}$, then they are almost unchanged, except that we replace the order of the group $\widehat{W}_\la $ with the number of cosets $C(\gamma)=\frac{\# {\widehat{W}_\la}}{\#
   {W_\la}}$ for any maximal ideal $\mathfrak{m}_\la$ lying over
$\mathsf{m}_\gamma$ in $\Lambda$.
With the assumptions $(\star)$, the algebra $U_{\gamma}^{(1)}$ is
  $C(\gamma)^2$-dimensional, and the sum of the dimensions of the $\gamma$-generalized weight space for all simple \GT -modules is $\leq S(\gamma)$.  This seems to be implicit in the results of    \cite{futornyFibersCharacters2014} such as Th. 4.12(c), but some of these results are left unstated there\footnote{Note that the published and arXived versions of \cite{futornyFibersCharacters2014} have different section numbering. We follow the numbering of the published version; in the arXiv version, this is Theorem 5.2(3).}.
 
\subsection{Coulomb branches}
\label{sec:coulomb-branches-1}

The results of the previous sections are fairly abstract and give no
indication of how to actually compute the algebras
$U_{\gamma}^{(1)}$ and understand their representation theory.
In this section, we discuss the source of many of the most interesting examples of principal Galois orders: the {\bf Coulomb branches} of Braverman, Finkelberg, and Nakajima
\cite{NaCoulomb,BFN}.  These include the primary motivating example,
the orthogonal Gelfand-Zetlin\footnote{As any savvy observer knows, there is no universally agreed-upon spelling of \foreignlanguage{russian}{Гельфанд-Цетлин} in the Latin alphabet;  in fact, it's not even spelled consistently in Russian, since some authors write \foreignlanguage{russian}{Цейтлин}, a different transliteration of the same name.  We will write ``Tsetlin'' as this is the spelling that will elicit the most correct pronunciation from an English-speaker. However, since ``OGZ'' is well-established as an acronym, we will not change the spelling of the name of these algebras.} algebras of Mazorchuk
\cite{mazorchukOGZ} (including $U(\mathfrak{gl}_n)$), and a number of
examples that seem to have escaped the notice of experts, such as the
spherical Cherednik algebras of the groups $G(\ell,1,n)$ and
hypertoric enveloping algebras.

\notation{$G$}{A reductive connected group.}
\notation{$N$}{A representation of $N$.}
The  Coulomb branch is an algebra constructed from the data of a gauge
group $G$ and matter representation
$N$. For example:
\begin{itemize}
\item In the case where $G$ is abelian and $N$ arbitrary, the Coulomb branch is a
  hypertoric enveloping algebra as defined in  \cite{BLPWtorico}; the
  isomorphism of this with a Coulomb branch (defined at a ``physical level of rigor'') is proven in \cite[\S
  6.6.2]{bullimoreBoundariesMirror2016}; it was confirmed this
  matches the BFN definition of the Coulomb branch in \cite[\S 4(vii)]{BFN}.
\item In the case where $G=GL_n$ and $N=\mathfrak{gl}_n\oplus (\C^n)^{\oplus \ell}$,
  the Coulomb branch is a spherical Cherednik algebra of the group $G(\ell,1,n)$ by \cite{KoNa}.  Recent work of the author and LePage confirms that the spherical Cherednik algebra for $G(\ell,p,n)$ is also a
principal Galois order \cite[Prop. 3.16]{lepageRationalCherednik2023}.  
\item In the case where
  \newseq
  \begin{align*}
\label{eq:OGZ1}\subeqn G&=GL_{v_1}\times \cdots \times GL_{v_{n-1}}\\
\label{eq:OGZ2}\subeqn  N&=M_{v_n,v_{n-1}}(\C)\oplus M_{v_{n-1},v_{n-2}}(\C) \oplus
      \cdots\oplus M_{v_2,v_{1}}(\C),
    \end{align*}
the Coulomb branch is an orthogonal Gelfand-Zetlin algebra associated to the dimension vector $\Bv=(v_1,\dots, v_n)$ as shown in \cite[\S 3.5]{weekesGeneratorsCoulomb2019}.  In particular, $U(\mathfrak{gl}_n)$
    arises from the vector $\Bv=(1,2,3,\dots, n)$.    
\end{itemize}
In this case, the algebras $U_{\gamma}^{(1)}$ also have a
geometric interpretation in terms of convolution in homology:
\begin{itheorem}[Th. \ref{thm:stein-iso}]
  The Coulomb branch for any group $G$ and representation
  $N$ is a principal Galois order with
  $\Lambda=\Sym^\bullet(\mathfrak{t})[\hbar]$, the symmetric algebra
  on the Cartan of $G$ with an extra loop parameter $\hbar$  and $\What$ given by the
  affine Weyl group of $G$ acting naturally on this space.  
  
  For each maximal ideal $\mathsf{m}_{\gamma}$ of $\Gamma$, there is a Levi subgroup $ {G_\gamma}\subset G$, with parabolic $ {P_\gamma}$ and a
  $ {P_\gamma}$-submodule $ {N^-_\gamma}\subset N$ such that \begin{align}
  U_{\gamma}^{(1)} &\cong H^{BM}_*\big(\big\{(gP_\gamma, g'P_\gamma,n)\in
    \frac{G_\gamma}{P_\gamma}\times \frac{G_\gamma}{P_\gamma}\times N\mid n\in
                     gN^-_\gamma \cap g'N^-_\gamma \big\}\big)\notag\\
   U^{(1)}_{\mathscr{S}}&\cong \bigoplus_{\gamma,\gamma'\in \mathscr{S}}H^{BM}_*\big (\big\{(gP_\gamma, g'P_{\gamma'},n)\in
     \frac{G_\gamma}{P_\gamma}\times \frac{G_{\gamma'}}{P_{\gamma'}}\times N \mid n\in
                     gN^-_\gamma \cap g'N^-_{\gamma'}\big\}\big)\label{eq:steinberg}
  \end{align}
for any set $\mathscr{S}$ contained in a single $\What$-orbit, 
where the right hand side is endowed with the usual convolution
multiplication (as in \cite[(2.7.9)]{CG97}).  
\end{itheorem}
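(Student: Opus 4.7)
The first assertion---that the Coulomb branch is a principal Galois order with $\Lambda=\Sym^{\bullet}(\mathfrak{t})[\hbar]$ and $\widehat{W}$ the affine Weyl group of $G$---is essentially contained in \cite{WeekesCoulomb}: the Gelfand-Tsetlin subalgebra $\Gamma=\Lambda^W$ is identified with $H^*_{G\times \C^\times_\hbar}(\mathrm{pt})$, and the translation part of $\widehat{W}$ acts by convolution with cocharacter orbit classes on the affine Grassmannian. Compatibility with the flag Galois order framework developed in this paper is then extracted via Lemma~\ref{lem:FD}. The substantial content lies in the geometric description of $U^{(1)}_\gamma$ and $U^{(1)}_\mathscr{S}$.

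For this, my plan would be to apply equivariant localization with respect to the $\C^\times$-action on the BFN space $\mathcal{R}_{G,N}$ induced by the cocharacter underlying $\gamma$. On the algebra side, $\mathsf{m}_\gamma$-adic completion corresponds geometrically to restriction to the $\gamma$-fixed locus of $\mathcal{R}_{G,N}$. The key geometric step is to show that this fixed locus decomposes, stratum by stratum on the affine Grassmannian, into components indexed by cosets in $\widehat{W}/\widehat{W}_\gamma$, with the trivial coset producing precisely the parabolic Steinberg-type variety appearing in \eqref{eq:steinberg}. Here $G_\gamma$ is the $\gamma$-centralizer (a Levi of $G$), $P_\gamma$ is the attractor parabolic for the $\C^\times$-action on $G$, and $N^-_\gamma\subset N$ is the sum of non-positive weight spaces of $\gamma$ on $N$; all of these arise intrinsically from the localization setup.

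Once this is established, the convolution product on $H^{BM}_*(\mathcal{R}_{G,N})$ restricts to the standard convolution on BM homology of the Steinberg variety in the sense of \cite[\S 2.7]{CG97}, since the correspondences defining convolution are compatible with passage to $\C^\times$-fixed loci. Passing from $\widehat{F}_\gamma$ to $F^{(1)}_\gamma$ corresponds geometrically to specializing the residual equivariant parameters in the center $\widehat{\Lambda}^{\widehat{W}_\gamma}$, yielding the stated finite-dimensional algebra. The description of $U^{(1)}_\mathscr{S}$ is then assembled block-by-block: the off-diagonal pieces $_\gamma F_{\gamma'}$ come from BM homology of correspondences between $G_\gamma/P_\gamma$ and $G_{\gamma'}/P_{\gamma'}$ inside the fixed locus, and the single-$\widehat{W}$-orbit hypothesis on $\mathscr{S}$ ensures the relevant Levi data match up.

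The hardest step will be the explicit identification of the $\gamma$-fixed locus of $\mathcal{R}_{G,N}$, since $\mathcal{R}_{G,N}$ is infinite-dimensional and is built from sections of $N$-bundles over a formal disc. Concretely, one must verify that the weight decomposition of $N$ under $\gamma$ cuts the section data in each $G_\mathcal{O}$-stratum down to exactly $N^-_\gamma$, and that only finitely many strata contribute after $\mathsf{m}_\gamma$-adic completion, so that the finite-dimensional Steinberg variety really captures $\widehat{F}_\gamma$. Once this fixed-locus analysis is secured, the remaining compatibilities with convolution and with the quotient to $F^{(1)}_\gamma$ follow formally from the standard equivariant-localization package.
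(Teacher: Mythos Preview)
Your proposal is essentially the same geometric localization argument the paper sketches (attributed there to Nakajima, building on Varagnolo--Vasserot): exponentiate $\gamma$ to a one-parameter subgroup $\mathbb{T}$, apply the localization theorem to identify the $\mathfrak{m}_\gamma$-adic completion with the completed equivariant Borel--Moore homology of the $\mathbb{T}$-fixed locus, and then explicitly compute that fixed locus as the parabolic Steinberg variety. Your identification of $G_\gamma$, $P_\gamma$, and $N^-_\gamma$ matches the paper's.

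Two points deserve attention. First, for the Galois-order structure you cite \cite{WeekesCoulomb}, but that paper concerns the OGZ case specifically; the embedding $\mathscr{A}\hookrightarrow \mathcal{K}_\Gamma$ for general Coulomb branches comes from localization in equivariant cohomology as in \cite[(5.18) \& Prop.~5.19]{BFN}. Second, you write that convolution ``is compatible with passage to $\C^\times$-fixed loci'' as though this were formal. The paper explicitly warns that this step is subtle: in finite dimensions one has both pullback and pushforward isomorphisms between $H_*^{\mathbb{T}}(X)$ and $H_*^{\mathbb{T}}(X^{\mathbb{T}})$ after localization, differing by the Euler class of the normal bundle, and neither alone intertwines convolution. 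One must take the pushforward isomorphism in the first factor and the pullback in the second (equivalently, twist by half the Euler class), and in the infinite-dimensional BFN setting neither pure isomorphism even makes sense---only this intermediate one does. The paper ultimately sidesteps a full justification by invoking the purely algebraic isomorphism of \cite[Thm.~4.2]{WebSD}; if you want a self-contained geometric proof you will need to address this Euler-class bookkeeping carefully rather than treat it as a formality.
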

The algebra $U^{(1)}_{\mathscr{S}}$ is a {\bf Steinberg algebra} in the sense of Sauter \cite{sauterSurveySpringer2013}.  
One notable point to consider is that this algebra is naturally
graded. Thus,   for any choice of $(G,N)$ and
$\What$-orbit $\mathscr{S}$, this gives a graded lift
$\widetilde{\GTc}(\mathscr{S})$ of the category of Gelfand-Tsetlin
modules supported on this orbit.  It's a consequence of the
Decomposition Theorem that the classes of simple modules form a dual canonical basis of the Grothendieck group $K_0(\widetilde{\GTc}(\mathscr{S}))$ (see Theorem \ref{th:dual-canonical1}).
  
\notation{$\tilde{T}_{\Bv}$}{The KLRW
algebra with $v_i$ black strands.}
Let us now focus on the case of orthogonal Gelfand-Zetlin algebras, so
$G$ and $N$ are of the form
(\ref{eq:OGZ1}--\ref{eq:OGZ2}). 
The convolution algebras of \eqref{eq:steinberg} have appeared in numerous places in the
literature: they are very closely
related to the {\bf KLRW
algebras}\footnote{Called ``Stendhal algebras'' or ``Webster
algebras'' in some other sources, and a special case of ``reduced
weighted KLR algebras'' by \cite[Th. 3.5]{WebwKLR}.} $ {\tilde{T}}$ as defined in \cite[Def. 4.5]{Webmerged}
corresponding to the Lie algebra $\mathfrak{sl}_n$, with its Dynkin
diagram identified as usual with the set $\{1,\dots, n-1\}$.  These algebras
correspond to a list of highest weights, which we will take to be
$v_n$ copies of the $(n-1)$-st fundamental weight $\omega_{n-1}$; the
dimension vector $(v_1,\dots, v_{n-1})$ determines the number of times that
each Dynkin node appears as a label on a black strand.  Readers
unfamiliar with these algebras can also refer to \cite[\S
3.1]{KTWWYO}.  
The author has proven in \cite[Cor. 4.9]{WebwKLR} that there is a set $\mathscr{S}$ such that:
\begin{equation}\label{eq:equiv-conv}
   \tilde{T}\cong \bigoplus_{\gamma,\gamma'\in \mathscr{S}}H^{BM, G}_*\big (\big\{(gP_\gamma, g'P_{\gamma'},n)\in
     \frac{G_\gamma}{P_\gamma}\times \frac{G_{\gamma'}}{P_{\gamma'}}\times N \mid n\in gN^-_\gamma \cap g'N^-_{\gamma'}\big\}\big), 
\end{equation}
That is, $\tilde{T}$ is an equivariant Steinberg algebra for the space
appearing in \eqref{eq:steinberg}.  The set $\mathscr{S}$ appearing here is finite, but if we change it to the set of all integral elements, the RHS of \eqref{eq:equiv-conv} gives an algebra Morita equivalent to $ \tilde{T}$.  

The
algebra $\tilde{T}$ is a cousin of the KLR algebras \cite{KLI}, but instead of
categorifying the universal enveloping algebra $U(\mathfrak{n})$ of
the strictly lower triangular matrices in $\mathfrak{gl}_n$, they categorify the tensor product of
$U(\mathfrak{n})$ with the $v_n$th tensor power of the defining
representation of $\mathfrak{gl}_n$; this is proven in 
\cite[Prop. 4.39]{Webmerged}. The classes of
simple modules over this algebra match the dual canonical basis in this space (which is proven
in the course of the proof of \cite[Th. 8.7]{WebCB}). 

Of course, the difference between the RHS of equations \eqref{eq:steinberg} and \eqref{eq:equiv-conv} is between non-equivariant and equivariant homology.  We can account for the difference between these on the LHS by taking an appropriate quotient. That is:
\begin{icor}\label{cor:U-T}
  For $\mathscr{S} $ the set of integral elements of
  $\MaxSpec(\Gamma)$, the algebra $U^{(1)}_{\mathscr{S}}$ is Morita
  equivalent to
  the algebra $\tilde{T}'$, the quotient of $\tilde{T}$ by all positive degree central elements.
\end{icor}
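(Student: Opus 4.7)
The plan is to realize both $U^{(1)}_{\mathscr{S}}$ and $\tilde{T}'$ as non-equivariant convolution algebras on the same Steinberg-type variety, with the discrepancy between their indexing sets absorbed into a Morita equivalence.

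First, I would specialize the second Theorem to the OGZ data \eqref{eq:OGZ1}--\eqref{eq:OGZ2}. For an integral $\gamma\in \MaxSpec(\Gamma)$, the Levi $G_\gamma$, parabolic $P_\gamma$, and subrepresentation $N^-_\gamma$ appearing on the right-hand side of \eqref{eq:steinberg} encode exactly a loading of colored strands on the real line in the sense of \cite[\S 3.1]{KTWWYO}: the eigenvalues of $\gamma$ determine strand positions, the factors of $G$ determine colors, and $N^-_\gamma$ records the oriented matter flows. The Steinberg-type variety is then identified with the variety underlying the corresponding block of $\tilde{T}$.

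Second, I would invoke \cite[Cor. 4.9]{WebwKLR}, which asserts that $\tilde{T}$ is the $(G\times \C^\times)$-equivariant Borel--Moore homology of precisely this Steinberg variety, with convolution product. The hypothesis that $\mathscr{S}$ consists of integral weights ensures that we stay in the locus where this geometric description applies. Quotienting $\tilde{T}$ by the unique graded maximal ideal of its center $\Gamma$ specializes all equivariant parameters to zero, so $\tilde{T}'$ coincides literally with the non-equivariant convolution algebra attached to a finite set of loading representatives for the relevant orbits.

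Third, the Morita equivalence comes from comparing the two indexing sets. The algebra $U^{(1)}_{\mathscr{S}}$ sums over all pairs of integral weights $(\gamma,\gamma')$, whereas $\tilde{T}'$ sums only over finitely many loading representatives. Two integral weights that differ by a reordering of coordinates within a common coincidence pattern yield identical Steinberg varieties and hence equivalent block idempotents; collapsing each such equivalence class to a single representative is exactly the procedure of replacing $F$ by $U=eFe$ via the symmetrizer $e$, which is Morita precisely under the freeness condition $(\star)$ that governs the OGZ case.

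The main obstacle will be the bookkeeping in the third step, together with the fact that \eqref{eq:steinberg} is stated orbit by orbit while the set of all integral elements of $\MaxSpec(\Gamma)$ decomposes into a countable union of $\widehat{W}$-orbits. One must therefore take direct sums over orbits on both sides and verify that the $\widehat{W}$-orbit decomposition on the Galois-order side matches the block decomposition of $\tilde{T}$ under its loading parametrization from \cite[\S 3.1]{KTWWYO}. Once this orbit-by-orbit matching is checked, the Morita equivalence of the two convolution algebras is immediate from the identifications established in steps one and two.
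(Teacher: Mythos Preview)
Your overall strategy---realize both sides as Borel--Moore convolution algebras on the same Steinberg spaces via Theorem~B and \cite[Cor.~4.9]{WebwKLR}, then reconcile the indexing sets---is exactly the paper's approach, and is also how Theorem~\ref{th:F-KLR} makes the corollary precise. Two points in your write-up are garbled, however, and would need to be fixed before this becomes a proof.

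First, your ``main obstacle'' is based on a misreading: once the flavor parameters $\lambda_{n,*}$ are fixed to integers, the integral points of $\MaxSpec(\Gamma)$ form a \emph{single} $\widehat{W}$-orbit, not a countable union. The translation lattice $\mathcal{M}$ acts simply transitively on integer tuples modulo $W$, so there is no orbit-matching to do. This does not damage the argument---it only removes a step you thought you needed---but it signals that you should revisit what $\widehat{W}$ is in this example.

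Second, and more substantively, your account of the Morita equivalence in step three conflates two unrelated reductions. The passage $F\mapsto U=eFe$ via the symmetrizer $e$ is the quotient by the $W$-action on $\MaxSpec(\Lambda)$; it has nothing to do with collapsing the infinitely many $\gamma\in\mathscr{S}$ to finitely many loading types. The latter collapse is governed instead by the observation (Lemma~\ref{lem:Nlam-same} in the paper) that $\gamma$ and $\gamma'$ with the same $(P_\gamma,N^-_\gamma)$ produce isomorphic idempotents in the convolution algebra, so that $U^{(1)}_{\mathscr{S}}$ is Morita equivalent to $U^{(1)}(\mathsf{S})$ for any \emph{finite complete} set $\mathsf{S}$ of representatives. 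One then checks that the idempotents $e(\lambda)$ for $\lambda\in\mathsf{S}$ exhaust the isomorphism classes of primitive idempotents in $\tilde{T}'$ (equivalently, that no simple $\tilde{T}'$-module is killed by $e_{\mathsf{S}}$), which is the content of the paper's Theorem~\ref{th:F-KLR} together with the Morita property of the flag order. Your sentence ``collapsing each such equivalence class \ldots\ is exactly the procedure of replacing $F$ by $U=eFe$'' is simply false and should be replaced by this two-step argument.
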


This gives a new way of interpreting the results of \cite[\S
6]{KTWWYO}; in particular, Corollary \ref{cor:U-T} is effectively
equivalent to Theorem 6.4 of {\it loc.\ cit.}  This
gives us a criterion in terms of which weight spaces are not zero that
classifies the different simple Gelfand-Tsetlin modules with integral
weights for an orthogonal Gelfand-Zetlin algebra (Theorem
\ref{th:good-words}).  

In joint work with Silverthorne \cite{silverthorneGelfandTsetlinModules2024}, we develop the consequences of this connection further and present computer calculations that completely answer
Question \ref{question} for Gelfand-Tsetlin modules of
$\mathfrak{sl}_3$ and $\mathfrak{sl}_4$ (higher values of $n$ proved
to be too much for our computer). This matches the results of Futorny-Grantcharov-Ramirez \cite{futornyGelfandTsetlinModules2018}.  We can answer at least one basic
question for much higher values of $n$: the number of simple integral
Gelfand-Tsetlin modules in the principal block of
$\mathfrak{sl}_n$.  These are given by a modified version of the Kostant partition function (for which we know no closed form).  In small ranks, these are given by:\medskip

\setlength{\tabcolsep}{1em} 
\centerline{\begin{tabular}[]{|c|c|c|c|c|c|c|c|}\hline
              $n$ & 3 & 4 & 5 & 6 & 7&8&9\\
\hline                    
\# of simples& 20 & 259 & 6005 & 235,546&14,981,789&$1.494\times10^9$&$2.275 \times 10^{11}$ \\ \hline
\end{tabular}}\medskip
 
This shows the difficulty of answering this question in an {\it ad hoc} case-by-case manner once $n>3$ (and especially $n>4$).   

However, this does not preclude systematic study of these questions.  As an illustration, we use these results to resolve a question of Mazorchuk \cite{mazorchukPersonalCommunication}.

Standard calculations (for example, \cite[Th. 2.20]{molevGelfandTsetlin2006}) compute the spectrum of $\Gamma$ on every finite-dimensional representation.  These correspond precisely to {\bf Gelfand-Tsetlin patterns}, and in the fiber over such a pattern, there is a unique finite-dimensional module.  It is natural to ask if there are any infinite-dimensional modules in the fiber over a Gelfand-Tsetlin pattern.  One can easily confirm by hand that there are no such infinite-dimensional modules for $\mathfrak{gl}_2$, and the explicit calculations of \cite{futornyGelfandTsetlinModules2018,silverthorneGelfandTsetlinModules2024} show that there are no such modules for $n\leq 4$.  However, low-rank cases like this can often be deceptive:
\begin{itheorem}[Th. \ref{th:Mazorchuk}]
  Let $U=U(\mathfrak{gl}_n)$ and $\Gamma$ its usual \GT subalgebra.  For the maximal ideal $\mathfrak{m}_\gamma\subset \Gamma$ corresponding to a Gelfand-Tsetlin pattern:
  \begin{enumerate}
      \item If $n\leq 5$, the fiber over $\gamma$ is a single finite-dimensional irreducible representation.
      \item If $n\geq 6$, the fiber over $\gamma$ contains an infinite-dimensional irreducible representation.
  \end{enumerate}
\end{itheorem} 
We establish this by a hands-on computation using KLR algebras (though locating the relevant example required some help from SageMath).  

\section*{Acknowledgements}

A great number of people deserve acknowledgment in the creation of
this paper: my collaborators Joel Kamnitzer, Peter Tingley, Alex Weekes, and Oded
Yacobi, since this project grew out of our joint work; Volodymyr
Mazorchuk, who first suggested to me that a connection existed between
our previous work and the study of Gelfand-Tsetlin modules, and who,
along with Elizaveta Vishnyakova, Jonas Hartwig, Slava Futorny, Dimitar Grantcharov,
and Pablo Zadunaisky, suggested many references and improvements;
Turner Silverthorne, who collaborated with me on the computer program
used in some of the calculations in this paper; 
Hiraku Nakajima who, amongst many other things,  pointed out to me
the argument used in the proof of Theorem \ref{thm:stein-iso}; and several anonymous referees.

The author was supported by NSERC through a Discovery Grant. This research was supported in part by Perimeter Institute for Theoretical Physics. Research at Perimeter Institute is supported by the Government of Canada through the Department of Innovation, Science and Economic Development Canada and by the Province of Ontario through the Ministry of Research, Innovation and Science.

\section{Generalities on Galois orders}
\label{sec:gener-galo-orders}

Following the notation of \cite{Hartwig}, let $\Lambda$ be a noetherian
integrally closed domain and $L$ its fraction field.  Note that
this implies Hartwig's condition (A3), and we lose no generality in
assuming this by \cite[Lem. 2.1]{Hartwig}.  Let $W$ be a
finite group\footnote{Note that this is a departure from the notation of
  \cite{Hartwig}, where this group is denoted by $G$.  We will be most interested in the case where $W$ is the Weyl group of a
semisimple Lie group acting on the Cartan, so we prefer to save $G$
  for the name of the Lie group.} acting faithfully on $\Lambda$ and
$\Gamma=\Lambda^{W}, K=L^{W}$.  Let
$\mathcal{M}$ be a submonoid of $\operatorname{Aut}(\Lambda)$ which is normalized by
$W$, and let $\What=W\ltimes\mathcal{M} $, which we also
assume acts faithfully (this implies Hartwig's (A1) and (A2)).   Let
$\calL$ be the smash product $L\# \mathcal{M}$, 
$\calF=\calL\# W$, and $\calK=\calL^{W}$.  Note that
$L$ is an $ \calL$ module in the obvious way and thus $K$ is a $
\calK$-module.  

 \newcommand{\KGamma} {\mathcal{K}_{\Gamma}}
\notation{$\KGamma$}{The standard order $\{X\in \calK\mid
    X(\Gamma)=\Gamma\}$.}

The more general notion of Galois orders was introduced by Futorny and
Ovsienko \cite{FOgalois}, but we will only be interested in a special
class of these considered by Hartwig in \cite{Hartwig}, which makes
these properties easy to check.  
\begin{Definition}[\mbox{\cite[Def 2.22 \& 2.24]{Hartwig}}]\label{def:PGO}
  The {\bf standard order} (or ``universal ring'' in the terminology of \cite{vishnyakovaGeometricApproach2018, MVHC}) is the subalgebra
  \[\KGamma=\{X\in \calK\mid
    X(\Gamma)\subset \Gamma\}.\] A subalgebra
  $ A\subset \KGamma$ containing $\Gamma$ is a {\bf principal Galois order}
  if $KA=\calK$.
\end{Definition}
It is a well-known principle in the analysis of quotient singularities
that taking the smash product of an algebra with a group acting on it is
a much better-behaved operation than taking invariants.  Similarly, in
the world of Galois orders, there is a larger algebra that
considerably simplifies the analysis of these algebras.

\begin{Definition}\label{def:FGO}
  The {\bf standard flag order} is the subalgebra
  \[\mathcal{F}_{\Lambda}=\{X\in \calF\mid
    X(\Lambda)\subset \Lambda\}.\]    A subalgebra $
  F\subset
 \mathcal{F}_{\Lambda}$ containing $\Lambda$ is called a {\bf principal flag order} if $LF=\calF$ and $W\subset F$.  
\end{Definition}
It is an easy check, via the same proofs, that the analogues of
\cite[Prop. 2.5, 2.14 \& Thm 2.21]{Hartwig} hold here: that is $F$ is a
Galois order inside $\calF$ with $\Lambda$ maximal
commutative; in order to match the notation of \cite{FOgalois}, we
must take $G=\{1\}$ and $\EuScript{M}=W\ltimes \mathcal{M}$.

    \newcommand{\FLambda} {\mathcal{F}_{\Lambda}}
    \notation{$\FLambda$}{The standard flag order $\{X\in \calF\mid
    X(\Lambda)=\Lambda\}$.}

Let
$e=\frac{1}{\#W}\sum_{w\in W}w\in \FLambda$.  Note
that $\calK\subset \calF$ through the obvious inclusion. Given $k\in \calK$, the element $eke\in \calF$ acts
on $\Gamma$ by the same operator as $k$.  Thus, $k\mapsto eke$ is an
algebra isomorphism $\calK\cong e\calF e$.
\begin{Lemma}
  The isomorphism above induces an isomorphism $\KGamma\cong e \FLambda e$.  
\end{Lemma}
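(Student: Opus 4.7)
The plan is to show that the algebra isomorphism $\varphi\colon \mathcal{K}\to e\mathcal{F}e$, $k\mapsto eke$, restricts to a bijection between the subalgebras $\mathcal{K}_\Gamma$ and $e\mathcal{F}_\Lambda e$. Since $\varphi$ is already known to be an algebra isomorphism, the only work is to establish the two set-theoretic inclusions $\varphi(\mathcal{K}_\Gamma)\subset e\mathcal{F}_\Lambda e$ and $\varphi(\mathcal{K}_\Gamma)\supset e\mathcal{F}_\Lambda e$. Throughout, I will use two elementary facts: first, the idempotent $e$ acts on $L$ as the Reynolds operator with image $K=L^W$, so in particular $e(\Lambda)\subset \Gamma$ and $e$ is the identity on $\Gamma$; and second, $K\cap\Lambda=\Gamma$, since any element of $\Lambda$ that is fixed by $W$ is by definition in $\Lambda^W=\Gamma$.

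For the forward inclusion, take $k\in \mathcal{K}_\Gamma$ and apply $eke$ to an arbitrary $\lambda\in\Lambda$. The first application $e(\lambda)=\tfrac{1}{\#W}\sum_{w\in W} w(\lambda)$ lies in $\Gamma$; by hypothesis $k$ sends $\Gamma$ into $\Gamma$, so $k(e(\lambda))\in \Gamma$; and then $e$ fixes this element, giving $eke(\lambda)\in\Gamma\subset\Lambda$. Thus $eke\in\mathcal{F}_\Lambda$, and so $eke=e(eke)e\in e\mathcal{F}_\Lambda e$.

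For the reverse inclusion, let $x\in e\mathcal{F}_\Lambda e$, so $x=eye$ for some $y\in\mathcal{F}_\Lambda$. Since $x\in e\mathcal{F}e$, the isomorphism $\varphi$ produces a unique $k\in\mathcal{K}$ with $eke=eye$, and it remains to verify that $k(\Gamma)\subset\Gamma$. For any $\gamma\in\Gamma$, I compute $k(\gamma)$ in two ways acting on $L$: on one hand $e(\gamma)=\gamma$, $k(\gamma)\in K\subset L^W$, and $e$ fixes $L^W$, so $eke(\gamma)=k(\gamma)$; on the other hand $eye(\gamma)=e(y(\gamma))$, and since $y\in\mathcal{F}_\Lambda$ we have $y(\gamma)\in \Lambda$, so $e(y(\gamma))\in\Gamma$ by the Reynolds averaging. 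Combining, $k(\gamma)\in \Gamma$, which shows $k\in\mathcal{K}_\Gamma$ and completes the proof.

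The argument is essentially bookkeeping; the only conceptual point to be careful about is that in the reverse direction one should not try to deduce $k(\gamma)\in\Lambda$ directly from the $\mathcal{F}_\Lambda$-condition on $y$, because $k$ and $y$ are only related through conjugation by $e$. The clean way around this, used above, is to observe that $k$ and $eke$ actually agree as operators on $\Gamma\subset K$, and to combine this with $K\cap\Lambda=\Gamma$ to upgrade the conclusion $k(\gamma)\in\Lambda$ to $k(\gamma)\in\Gamma$. This is the only step where the integrally-closed/fixed-field hypothesis on $\Lambda,\Gamma$ is implicitly used, and it is the step I expect to be easiest to overlook.
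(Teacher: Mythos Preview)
Your proof is correct and follows essentially the same route as the paper's: both directions are established by tracking how $eke$ (respectively $eae$) acts on $\Lambda$ or $\Gamma$, using that $e$ is the Reynolds projector onto $\Gamma$. The paper's version is terser---it writes the reverse inclusion as the one-line computation $eae\cdot\Gamma = ea\cdot\Gamma \subset e\Lambda = \Gamma$---but the content is the same. One small remark: your closing paragraph about the ``integrally-closed/fixed-field hypothesis'' being needed for $K\cap\Lambda=\Gamma$ is a misattribution; as you yourself note earlier, that identity is just the definition $\Gamma=\Lambda^W$, and in fact your argument never actually invokes $K\cap\Lambda=\Gamma$ since you obtain $k(\gamma)=e(y(\gamma))\in\Gamma$ directly.
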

\begin{proof}
  If $a\in \FLambda$, then $eae \Gamma=ea \Gamma\subset
  e\Lambda=\Gamma$, so $eae \in e\KGamma e$.  On the other hand, the subalgebra $e\KGamma e$ acts trivially on the elements of $\Lambda$ that transform by any non-trivial irrep of $W$, and sends $\Lambda$ to $\Lambda$. This shows that we also have the opposite inclusion $e\KGamma e\subset e
  \FLambda e$.  
\end{proof}
Thus, we have that for any flag order $F$, the centralizer algebra
$U=eFe$ is a
principal Galois order.  
As usual with the centralizer algebra of an idempotent:
\begin{Lemma}
  The category of $U$-modules is a quotient of the category of
$F$-modules through the functor $M\mapsto eM$; that is, this functor is exact and has right and left adjoints $N\mapsto Fe\otimes_{U}N$ and
  $N\mapsto \Hom_{U}(eF,N)$ that split the quotient functor.
\end{Lemma}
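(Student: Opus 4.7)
The plan is to verify each clause by standard manipulations with the idempotent $e\in F$, viewing $eF$ as a $(U,F)$-bimodule and $Fe$ as an $(F,U)$-bimodule, where $U=eFe$. First I would observe that the two natural descriptions
\[eM\;\cong\;eF\otimes_F M\;\cong\;\Hom_F(Fe,M)\]
are both $U$-module isomorphisms; the first makes $e(-)$ right exact and the second makes it left exact. Exactness of $e(-)$ therefore follows from the fact that $Fe$ is a summand of $F$ as a left $F$-module (hence projective), so $\Hom_F(Fe,-)$ is exact.

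Next, the adjunctions are the standard tensor-hom adjunctions attached to the two presentations above. Writing $eM=eF\otimes_F M$ gives
\[\Hom_U(eM,N)=\Hom_U(eF\otimes_F M, N)\;\cong\;\Hom_F\bigl(M,\Hom_U(eF,N)\bigr),\]
identifying $N\mapsto\Hom_U(eF,N)$ as the right adjoint. Writing $eM=\Hom_F(Fe,M)$ together with the tensor-hom adjunction
\[\Hom_F(Fe\otimes_U N,M)\;\cong\;\Hom_U\bigl(N,\Hom_F(Fe,M)\bigr)\;=\;\Hom_U(N,eM)\]
identifies $N\mapsto Fe\otimes_U N$ as the left adjoint.

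It remains to verify that both adjoints split the functor $e(-)$, i.e.\ that the unit and counit of these adjunctions are isomorphisms after applying $e(-)$. For the left adjoint this is immediate:
\[e(Fe\otimes_U N)\;=\;eFe\otimes_U N\;=\;U\otimes_U N\;\cong\;N.\]
For the right adjoint, an element of $e\Hom_U(eF,N)$ is, by the definition of the left $F$-action on $\Hom_U(eF,N)$ (namely $(f\cdot\varphi)(x)=\varphi(xf)$), a homomorphism of the form $x\mapsto\varphi(xe)$, and hence factors canonically through the quotient $eF\twoheadrightarrow eFe$. This produces the identification
\[e\Hom_U(eF,N)\;\cong\;\Hom_U(eFe,N)\;=\;\Hom_U(U,N)\;\cong\;N.\]
The mildly subtle step is this last identification, because one must track bimodule structures carefully to see that the map is $U$-linear; once this is done, exactness plus the splitting of both adjoints gives the quotient-category description in the usual way (since the kernel of an exact functor with a fully faithful adjoint is automatically a Serre subcategory and the induced functor is an equivalence).
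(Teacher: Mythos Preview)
Your proof is correct and is exactly the standard argument one would give for this fact about idempotents. The paper does not actually supply a proof: it introduces the lemma with the phrase ``As usual with the centralizer algebra of an idempotent'' and leaves it at that, treating the result as well known. So there is nothing to compare against beyond noting that you have spelled out in full the details the paper assumes the reader already knows.
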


Furthermore, every principal Galois order appears this way.  Consider
the smash product $\Lambda\#
W\subset \End_{\Lambda^{W}}(\Lambda)$, and let
$D$ be a subalgebra that satisfies $ \Lambda\#
W\subset D
\subset \End_{\Gamma}(\Lambda)\subset L\# W$.  Note that in
this case, $e D e=\Gamma$, since this is true
when $D= \Lambda\#
W$ or $D= \End_{\Gamma}(\Lambda)$.
\nc{\FD}{F_D}
Let $\FD$ be the subalgebra generated by $D\subset \FLambda$ and by $eUe$, which is the image of $U\subset \KGamma$ under its isomorphism to $e\FLambda e$.  Although we have a canonical isomorphism $U\cong eUe$,  it is helpful to distinguish these in the notation, since their natural actions on $\Lambda$ are different.  Indeed, every element of $eUe$ acts by 0 on $(1-e)L$, which is a complementary $K$-subspace to $K$ inside $L$.  While $eUe$ is not a unital subalgebra of $\FLambda$, the subalgebra $D$ is, so $\FD$ is a unital subalgebra.   
\notation{$\FD$}{A flag Galois order canonically constructed from
  $U$ and $D$ by considering
  $De\otimes_{\Gamma}U\otimes_{\Gamma}eD$.}
\begin{Lemma}\label{lem:FD}
  For any principal Galois order $U$, and any $D$ as above, the subalgebra $\FD$ is a principal flag order such that $U\cong e \FD e$.
\end{Lemma}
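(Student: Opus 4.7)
The plan is to handle the two assertions separately: first, the Morita-corner identity $U = eF_D e$; second, that $F_D$ inherits the structure of a principal flag order in $\mathcal{F}_\Lambda$ via its natural map. The first is an immediate tensor product computation:
\[
eF_D e = (eDe)\otimes_\Gamma U\otimes_\Gamma (eDe) = \Gamma\otimes_\Gamma U\otimes_\Gamma \Gamma = U,
\]
where the crucial ingredient $eDe = \Gamma$ is the identity established in the paragraph just before the lemma.

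For the flag order claim, I would define the natural map $\phi \colon F_D \to \mathcal{F}$ by multiplication, $a\otimes u\otimes b \mapsto a u b$, viewing $De, eD \subset D \subset \mathcal{F}$ and $U\subset \mathcal{K}\subset \mathcal{F}$. This descends to the balanced tensor product over $\Gamma$ by associativity of multiplication in $\mathcal{F}$, and it is an algebra homomorphism because the product rule on $F_D$ (built from the pairing $eD\otimes_D De \to \Gamma \subset U$) matches ordinary multiplication in $\mathcal{F}$. Since $D \subset \End_\Gamma(\Lambda)$ preserves $\Lambda$ by hypothesis and $U$ preserves $\Gamma$, the image lies in $\mathcal{F}_\Lambda$. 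To conclude that $F_D$ is a principal flag order, I must verify (a) $\Lambda, W \subset F_D$ as a subalgebra of $\mathcal{F}_\Lambda$ and (b) $KF_D = \mathcal{F}$. For (a), the chain of inclusions $\Lambda, W \subset \Lambda \# W \subset D$ lifts to $F_D$ via a Morita section of the pairing $eD\otimes_D De \to \Gamma$, giving an algebra embedding of $D$ into $F_D$ that supplies the required subalgebras. For (b), the defining property $KU = \mathcal{K}$ of a principal Galois order combined with the smash-product decomposition of $\mathcal{F}$ and the fact that $W \subset D \subset F_D$ forces $KF_D \supset K \cdot De\cdot U\cdot eD \supset \mathcal{K}\cdot W = \mathcal{F}$.

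The main obstacle I expect is part (a): identifying $\Lambda$ and $W$ as literal subalgebras of $F_D$. The naive embedding $d\mapsto de\otimes 1\otimes e$ collapses under $\phi$ to $de$ rather than to $d$, so one must exploit additional structure --- interpreting $F_D$ not purely as the bare tensor product but as the algebra of a generalized matrix (or pushout/Morita-context) construction in which $D$ and $U$ are amalgamated along $\Gamma$, with the given tensor presentation describing the ``off-diagonal'' bimodule and the algebra multiplication using $eD\otimes_D De \to \Gamma \subset U$ to close up. Once this identification is set up correctly, the remaining verifications reduce to routine consequences of $KU = \mathcal{K}$, associativity, and the hypothesis $\Lambda \# W \subset D \subset \End_\Gamma(\Lambda)$.
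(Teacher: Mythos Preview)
Your outline matches the paper's, but there are two genuine gaps and one unnecessary detour.

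First, you never address injectivity of the multiplication map $\phi\colon F_D\to\mathcal{F}$, and without it $F_D$ is not a \emph{subalgebra} of $\mathcal{F}_\Lambda$ at all, so the phrase ``principal flag order'' has no meaning. The paper's key move is to note that $De\subset (L\#W)e$ and $eD\subset e(L\#W)$ are $\Lambda$-submodules of $L$, and that the multiplication map
\[
(L\#W)e\;\otimes_K\;\mathcal{K}\;\otimes_K\;e(L\#W)\longrightarrow \mathcal{F}
\]
is an isomorphism (because $L\#W\cong M_{|W|}(K)$ with $e$ a rank-one idempotent). Injectivity of $\phi$ then follows by localizing the $\Gamma$-tensor product at $K$.

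Second, your step (b) contains an actual error: the claim $\mathcal{K}\cdot W=\mathcal{F}$ is false. Since $\mathcal{K}=\mathcal{L}^W\subsetneq\mathcal{L}$, the span $\mathcal{K}\cdot W$ is only $\mathcal{K}\# W$, a proper subalgebra of $\mathcal{F}=\mathcal{L}\# W$. (Relatedly, $eD$ does not recover $W$, because $ew=e$ for every $w\in W$.) The paper instead uses $KF_D\supset \mathcal{K}\Lambda W=\mathcal{L}W=\mathcal{F}$: the factor $\Lambda$ is what promotes $\mathcal{K}$ to $\mathcal{L}$.

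Finally, your ``main obstacle'' (a) dissolves once injectivity is in hand. Identifying $F_D$ with its image $De\cdot U\cdot eD\subset\mathcal{F}$, the containment $\Lambda\# W\subset F_D$ is just the assertion that $\Lambda\# W$ lies in this image, which the paper treats as immediate from $\Lambda\#W\subset D$. No generalized-matrix or Morita-context reinterpretation of $F_D$ is required; the naive tensor product already works once you stop trying to embed $D$ via $d\mapsto de\otimes 1\otimes e$ and instead look at the image of the multiplication map.
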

\begin{proof}
  \mybox{$F_D \subset \mathcal{F}_{\Lambda}$:} 
  As discussed above,  we have that $eUe\cdot \Lambda = eU\Gamma$. Thus, by the principal Galois order property of $U$, we have 
  \begin{equation}\label{eq:U-Lambda}
  	eUe\cdot \Lambda = eU\cdot \Gamma\subset \Gamma\subset \Lambda.
  \end{equation}  The subalgebra $D$ preserves $\Lambda$  by definition, so combined with \eqref{eq:U-Lambda}, we have $F_D\subset \mathcal{F}_{\Lambda}$.  
  
\mybox{$LF_D=\calF$ and $W\subset F_D$:} Furthermore, we have $L \FD\supset \calL\Lambda W=\calL W=\calF$ and by
  construction, the algebra $\FD$ contains the smash product $\Lambda\#
W$.  

Thus, the paragraphs above show that $F_D$ satisfies the conditions of a principal flag order.

\mybox{$eF_De=U$:} 
The algebra $F_D$ is spanned by elements of the form $f=d_0eu_1ed_1e\cdots eu_ned_n$ for $d_i\in D$ and $u_i\in U$.  Thus, we have 
\[efe=(ed_0e)u_1(ed_1e)\cdots u_n(ed_ne).\] Since $ed_ie\in \Gamma$, this product lies in $U.$
\end{proof}

\subsection{\GT modules}
\label{sec:weight-modules}

\notation{$\Wei_{\lambda}$}{The weight functor ${\Wei_{\lambda}}(M)=\{ m\in M \mid
 {\mathfrak{m}_\la^N}m=0 \:\text{ for some }  N\gg 0\}$.}

Now, fix a principal flag order $F\subset \FLambda$. 
We
wish to understand the representation theory of this algebra.  
Consider the weight functors \[ {\Wei_{\lambda}}(M)=\{ m\in M \mid
 {\mathfrak{m}_\la^N}m=0 \:\text{ for some }  N\gg 0\}\] for $\lambda\in
\operatorname{MaxSpec}(\Lambda)$.  The reader might reasonably be
concerned about the fact that this is a {\it generalized} eigenspace;
in this paper, we will always want to consider these, and thus will
omit ``generalized'' before instances of ``weight.''

\begin{Definition}
  We call a finitely generated $F$-module $M$ a {\bf weight module} or {\bf \GT module}
  if $M=\bigoplus_{\la\in \operatorname{MaxSpec}(\Lambda)}  {\Wei_{\lambda}}(M)$.  
\end{Definition}

\begin{Remark}\label{rem:finite}
  One subtlety here is that we have not assumed that $ {\Wei_{\lambda}}(M)$ is finite-dimensional.  We'll see below
  that this holds automatically if the stabilizer of $\lambda$ in $\What$ is finite.
\end{Remark}

Since many readers will be more interested in the Galois order
$U=eFe$, let us compare the weight spaces of a module $M$ with those
of the $U$-module $eM$.  Recall that $ {W_\la}$ is
the stabilizer of $\la$ in $W$, and let $e_\la\in \Z[\frac{1}{\#
  W}][W_\la]$ be the symmetrizing idempotent.  Of course, in $U$, we only have an action of
$\Gamma$. Let $\gamma\in \operatorname{MaxSpec}(\Gamma)$ be the image
of $\la$ under the obvious map, $\mathsf{m}_\gamma\subset \Gamma$ the corresponding maximal ideal and \[ {\Wei_\gamma}(M)=\{m\in eM\mid
  \mathsf{m}_\gamma^Nm=0\:\forall N\gg 0\}.\]

\begin{Lemma}\label{lem:FU}
  If $M$ is a \GT $F$-module, then $eM$ is a \GT $U$-module with
  \[ {\Wei_\gamma}(eM)\cong e_\la {\Wei_\la}(M).\]
\end{Lemma}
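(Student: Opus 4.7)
The plan is to exploit the $W$-action on $M$ (via the inclusion $W\subset F$) and the fact that it permutes generalized weight spaces: $w\cdot \Wei_\mu(M) = \Wei_{w\mu}(M)$, since $\mathfrak{m}_{w\mu} = w(\mathfrak{m}_\mu)$. Because the preimage of $\gamma\in\MaxSpec(\Gamma)$ in $\MaxSpec(\Lambda)$ is a single $W$-orbit $W\cdot\la$, the sum $M_\gamma := \bigoplus_{\mu\in W\la}\Wei_\mu(M)$ is a $W$-stable submodule, and the weight decomposition of $M$ regroups as $M = \bigoplus_\gamma M_\gamma$. Applying $e$ then yields $eM = \bigoplus_\gamma (M_\gamma)^W$, so the proof reduces to the two identifications $\Wei_\gamma(eM) = (M_\gamma)^W$ and $(M_\gamma)^W\cong e_\la\Wei_\la(M)$.

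For the first identification, the easy direction is $\supseteq$: a given $m\in (M_\gamma)^W$ has finitely many nonzero components $m_{w\la}$ ($w\in W/W_\la$), each killed by a power of $\mathfrak{m}_{w\la}\supseteq \mathsf{m}_\gamma$, so a uniform power $\mathsf{m}_\gamma^N$ annihilates $m$. For $\subseteq$: given $m\in eM$ with $\mathsf{m}_\gamma^Nm=0$, write $m=\sum_\mu m_\mu$; for each $\mu\notin W\la$, choose $f\in\mathsf{m}_\gamma\setminus\mathfrak{m}_\mu$. The crux is that on $\Wei_\mu(M)$, the element $f$ acts as the nonzero scalar $f(\mu)$ plus the locally nilpotent operator $f-f(\mu)$, hence $f^N$ is invertible on $\Wei_\mu(M)$; from $\mathsf{m}_\gamma^Nm=0$ we get $f^N m_\mu=0$, forcing $m_\mu=0$. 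Thus $m\in M_\gamma$ and, being in $eM$, lies in $(M_\gamma)^W$.

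For the second identification, use the projection $\pi_\la: M_\gamma\to \Wei_\la(M)$. It is $W_\la$-equivariant, so restricted to $(M_\gamma)^W$ it lands in $\Wei_\la(M)^{W_\la} = e_\la\Wei_\la(M)$. Injectivity: a $W$-invariant element is determined by its $\Wei_\la$-component, since all other components are the $W/W_\la$-translates. Surjectivity: for $x\in e_\la\Wei_\la(M)$ the sum $\sum_{w\in W/W_\la}w\cdot x$ is well-defined by $W_\la$-invariance of $x$, lies in $(M_\gamma)^W$, and projects to $x$. Putting things together gives the weight-space decomposition $eM=\bigoplus_\gamma\Wei_\gamma(eM)$ together with the desired identification; finite generation of $eM$ over $U$ is inherited from that of $M$ over $F$ by taking bases of the finitely many nonzero spaces $e_\la\Wei_\la(M)$.

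The hard part, conceptually, is the $\subseteq$ inclusion in the first identification: everything hinges on the fact that any element of $\Lambda$ not in $\mathfrak{m}_\mu$ acts invertibly on $\Wei_\mu(M)$, which in turn rests on the structure of the $\Lambda$-action on a generalized weight space as scalar plus topologically nilpotent. The remaining steps amount to a formal averaging argument comparing the $W$-module $M_\gamma$ with the $W_\la$-module $\Wei_\la(M)$ via induction.
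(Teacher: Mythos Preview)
Your argument is correct and follows essentially the same route as the paper's: identify $\Wei_\gamma(eM)$ with the $W$-invariants of $\bigoplus_{\la'\in W\la}\Wei_{\la'}(M)$, then reduce to $W_\la$-invariants of $\Wei_\la(M)$---the paper phrases the second step via the induced representation $\operatorname{Ind}_{W_\la}^{W}\Wei_\la(M)$, which is exactly your projection/averaging unwound. One small slip: your closing remark on finite generation of $eM$ assumes only finitely many weight spaces are nonzero, which need not hold (the paper's proof likewise does not address finite generation).
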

\begin{proof}

Let $\mathsf{m}_\gamma=\Gamma\cap  {\mathfrak{m}_\la}$. By standard
commutative algebra, the other maximal ideals lying over
$\mathsf{m}_\gamma$ are those in the orbit $W\cdot \la$.  Thus, we
have that \[ {\Wei_\gamma}(eM)=e\cdot
  \Bigg(\bigoplus_{\la'\in
  W\la}  {\Wei_{\la'}}(M)\Bigg).\] This space $\bigoplus_{\la'\in
  W\la}  {\Wei_{\la'}}(M)$
has a $W$-action induced by the inclusion $W\subset F$,
and is isomorphic to the induced representation
$\operatorname{Ind}_{ {W_\la}}^{W}  {\Wei_\la}(M)$ since it is a sum of subspaces
which it permutes like the cosets of this subgroup.  Thus, its
invariants are canonically isomorphic to the invariants for
$ {W_\la}$ on $ {\Wei_\la}(M)$.
\end{proof}

\subsection{The fiber for a flag order}
\label{sec:universal-gt-modules}

\begin{Definition}
  Fix an integer $N$.  The {\bf universal \GT module} of weight $\la$
    and length $N$ is the quotient $F/F  {\mathfrak{m}_\la ^N}$.
  \end{Definition}
  This is indeed a \GT -module by \cite[Lem. 3.2]{Hartwig}. 
 Since a homomorphism $F/F\mathfrak{m}_\la^N\to M$ for any module $M$ is determined by the image of $\bar 1\in F/F\mathfrak{m}_\la^N$, the module $F/F\mathfrak{m}_\la^N$ represents the functor of taking generalized weight
vectors killed by $ {\mathfrak{m}_\la^N}$:
\[\Hom_{F}(F/F  {\mathfrak{m}_\la^N}, M)=\{m\in M \mid
   {\mathfrak{m}_\la^N}m=0\}.\]
In particular, every simple \GT -module with $\Wei_\la(S)\neq 0$ is a
quotient of $F/F  {\mathfrak{m}_\la}$, since it must have a vector killed by
$ {\mathfrak{m}_\la}$.  Taking the inverse limit $\varprojlim {F}/{F} \mathfrak{m}_\la^N$, we obtain a
universal (topological) \GT module of arbitrary length.  Consider the
algebra
\[ {\widehat{F}_\la}=\varprojlim  {F}/\big({F}
  \mathfrak{m}_\la^N+\mathfrak{m}_\la^N F\big)\]

As noted in
\cite[Th. 18]{FOD}, this algebra controls the $\la$ weight spaces of
all modules, and in particular simple modules, in the sense that for every simple \GT -module with $\Wei_\la(S)\neq 0$, the $\widehat{F}_\la$-module $\Wei_\la(S)$ is simple, and every simple $\widehat{F}_\la$ appears this way for a unique simple \GT -module.

Let $ {\widehat{W}_\la}$ be the subgroup of $\What=W\ltimes \mathcal{M}$ which fixes
$\lambda$.  
 For the remainder of this section, we assume that
$ {\widehat{W}_\la}$ is finite.  This implies that $\Lambda$ is
finitely generated over
$ {\Lambda_\la}=\Lambda^{ {\widehat{W}_\la}}$.
\notation{${\Lambda_\la}$}{The invariants $\Lambda^{{\widehat{W}_\la}}$.}
\notation{${}_{\la}\What_{\mu}$}{The set of elements of $\What$ such that
$w\cdot \mu=\lambda$. }
\notation{${}_{\la}F_{\mu}$}{The elements of $F$
which are in the $K$-span of ${}_{\la}\What_{\mu}$. 
 }
\begin{Definition}
  Let $ {F_\la}$ be the intersection $F\cap L\cdot \What_\la\subset
  \calF=L \What$ with the 
  $L$-span of $ {\widehat{W}_\la}$. Since $ {F_\la}$ is the intersection of
  two subalgebras, it is itself a subalgebra.

  Let ${}_{\la}\widehat{W}_{\mu}$ be the set of elements of $\What$ such that
$w\cdot \mu=\lambda$.  Let ${}_{\la}F_{\mu}=F\cap K\cdot {}_{\la}\widehat{W}_{\mu}$ be the elements of $F$
which are in the $K$-span of ${}_{\la}\widehat{W}_{\mu}$. This is an ${F_\la}\operatorname{-}{F_\mu}$ bimodule
\end{Definition}

This has an obvious left and right module
structure over $\Lambda$  but $\Lambda$ is not central.  In the notation of \cite[(3)]{futornyFibersCharacters2014}, this would be $F(\What_\la)$.  Let $\Lambdahat$  be
the completion of $\Lambda$ in the
$\mla_{\lambda}$-adic topology, and let $\widehat{L}$ be the fraction field of $\Lambdahat$.  
\begin{Proposition}
\label{prop:Fla-order} \hfill
 \begin{enumerate}
\item   The bimodule $ {}_{\la}F_{\mu}$ is finitely generated as a left module and
  as a right module over $\Lambda $ and satisfies ${}_{\la}F_{\mu} L=L
  {}_{\la}F_{\mu}=L\cdot  {}_{\la}\widehat{W}_{\mu}$.  
  \item In fact, $F_\la$
  is a Galois order for the group $\EuScript{M}=\widehat{W}_\la$ and the commutative ring $\Lambda$,
  using the notation of \cite{FOgalois}.

     \item The image of $ {}_{\la}F_{\mu}$ spans ${F}/\big({F}
  \mathfrak{m}_\mu^N+\mathfrak{m}_\la^N F\big)$ for all $N$. 
  \item The bimodule $ {{}_{\la}\widehat{F}_\mu}$ is the completion
of ${}_{\la}{F}_\mu$ with respect to the topology induced by the basis of
neighborhoods of the identity ${F}_\la
\mathfrak{m}_\mu^N+\mathfrak{m}_\la^N F_\la$. In particular,  the $\widehat{L}$-vector space $\widehat{L}\otimes_{\Lambda}{}_{\la}\widehat{F}_\mu$ has basis ${}_{\la}\widehat{W}_{\mu}$.  
 \end{enumerate} 
\end{Proposition}
This shows in
particular that $\Lambda$ is {\bf big at $\la$} in the terminology of
\cite{FOD}.
\begin{proof}
 (1): Finite generation is an immediate consequence of the fact that $F$ is an order.    
    
  Thus, it only remains to show that ${}_{\la}\widehat{F}_\mu L=L
  {}_{\la}\widehat{F}_\mu=L\cdot  {{}_{\la}\widehat{W}_{\mu}} $. The inclusions ${}_{\la}\widehat{F}_\mu L\subset L\cdot {}_{\la}\widehat{W}_{\mu}\supset L{}_{\la}\widehat{F}_\mu$ are obvious by definition, so we only need to prove the opposite inclusions.  
  Since $\calF=L F$, for any $w\in {}_{\la}\widehat{W}_{\mu}$, we have $w=\sum k_if_i$ for $k_i\in K$, and $f_i\in F$.  Let $T\subset \widehat{W}$ be the support of the $f_i$'s.  If $T\subset {}_{\la}\widehat{W}_{\mu}$, then we are done, so let us prove this by induction on the number of elements $t\in T\setminus  {}_{\la}\widehat{W}_{\mu}$.  Fix such a $t$.  We have a polynomial $p$
  vanishing at $\la$, but not at $t^{-1}\cdot\la$.  Note that for $w$ as above, we have 
  $w=\frac{1}{p^{t}-p^{w}} (p^{t}w-wp)$, with the $p^t-p^w$ being non-zero in
  $K$ since it does not vanish at $\la$.  Substituting into our formula for $w$, we have
  \[w=\sum \frac{k_i}{p^{t}-p^{w}} (p^{t}f_i-f_ip).\]  The element $p^{t}f_i-f_ip\in F$ has support on $T\setminus\{t\}.$ Thus, we can inductively reduce the size of $T$ until $T\subset
  {}_{\la}\widehat{W}_{\mu}$.  That is, we can assume that $f_i\in  {}_{\la}\widehat{F}_\mu$.  This completes the proof that $L\cdot {}_{\la}\widehat{W}_{\mu}= L{}_{\la}\widehat{F}_\mu$; the proof for $L\cdot {}_{\la}\widehat{W}_{\mu}={}_{\la}\widehat{F}_\mu L$ is identical.
  
  (2): The property $F_\la L=L
  F_\la=L\cdot  {\widehat{W}_\la} $ which we have already verified shows that $F_{\la}$ is a Galois ring.  
  
  The ring $F_\la$ inherits the order property,
  i.e. its intersection with any finite-dimensional $L$-subspace $Z$ for
  the left/right action of 
  $\mathcal{F}_\la$ is finitely generated for the left/right
  action of $\Lambda$, from the order $F$.  After all, $Z$ is a finite-dimensional subspace of $\mathcal{F}$, so the order property of $F$ implies that $Z\cap F_\la=Z\cap F$ is finitely generated over $\Lambda$.

  (3): This is \cite[Th. 4.7]{futornyFibersCharacters2014} in the case where $S={}_{\la}\widehat{W}_{\mu}$, $\mathsf{m}=\mathfrak{m}_{\la}$ and $\mathsf{n}=\mathfrak{m}_{\mu}$.  
 
 (4): By point (3),  ${}_{\la}\widehat{F}_\mu$ is the completion of ${}_{\la}F_\mu$ with respect to the subspace topology, that is, the topology with a basis of neighborhoods given by ${}_{\la}F_\mu\cap \big({F}
  \mathfrak{m}_\mu^N+\mathfrak{m}_\la^N F\big)$.  Since \[{}_{\la}F_\mu
\mathfrak{m}_\mu^N+\mathfrak{m}_\la^N {}_{\la}F_\mu\subset {}_{\la}F_\mu\cap \big({F}
  \mathfrak{m}_\la^N+\mathfrak{m}_\la^N F\big),\] we will have the desired equivalence of topologies if we prove that the inclusion above is an equality.  Consider the quotient $\Lambda\operatorname{-}\Lambda$-bimodule \[Q_N=\frac{{}_{\la}F_\mu\cap \big({F}
  \mathfrak{m}_\mu^N+\mathfrak{m}_\la^N F\big)}{{F}_{\la}
  \mathfrak{m}_\mu^N+\mathfrak{m}_\la^N F_{\la}}.\]
  
Consider the ideal $\mathfrak{M}_{\la}=\mathfrak{m}_\la\otimes\Lambda +\Lambda\otimes\mathfrak{m}_\mu$.  Note that \begin{equation}\label{nakayama1}
	\mathfrak{M}_{\la}^{2N}\cdot Q_N\subset Q_N
  \mathfrak{m}_\mu^N+\mathfrak{m}_\la^N Q_N =0. 
\end{equation} 
Assume that $ f=\sum a_ig_i+h_ib_i$ for $a_i,b_i\in F$ and $g_i\in \mathfrak{m}_{\la}^N, h_i\in \mathfrak{m}_{\mu}^N$.  By \cite[Th. 4.7]{futornyFibersCharacters2014}, we can choose $a_i',b_i'\in \mathfrak{M}_{\la}\cdot F$ such that $a_i-a_i',b_i-b_i'\in {}_{\la}{F}_\mu $. The image of $f$ in $Q_N$ is the same as that of $f'=a_i'g_i+h_ib_i'\in \mathfrak{M}_{\la} Q_N$.  This implies that $Q_N=\mathfrak{M}_{\la}Q_N$.  Combining this with \eqref{nakayama1}, Nakayama's lemma shows that $Q_N=0$, completing the proof.  
\end{proof}

Alternatively, we can think about this topology by noting that $F_\la $ is finitely generated over
$ {\Lambda_\la}=\Lambda^{\What_\la}$.  Furthermore, $\Lambda_\la$ is central in $F_\la$, since it commutes with
$L\cdot \What_\la$; in fact, by Lemma \ref{prop:Fla-order}
above and \cite[Th. 4.1(4)]{FOgalois}, it is the full center of this
algebra.  Let 
  $\nla_{\lambda}=\mla_{\lambda}\cap \Lambda_\la$.
Since $\la$ is fixed by $\What_\la$ (by definition), the ideal
$\nla_{\lambda}\Lambda$ still only vanishes at $\la$, that is,
$\nla_{\lambda}\Lambda\supset  {\mathfrak{m}_\la^k}$ for
some $k$.
\notation{$\nla_{\lambda}$}{The maximal ideal $\nla_{\lambda}=\mla_{\lambda}\cap \Lambda_\la$} 

Let  $ {\widehat{\Lambda}_\la}$ be
the completion of $ {{\Lambda}_\la}$ in the
$\nla_{\lambda}$-adic topology.
\begin{Proposition}
\label{prop:fla-complete}  We have an isomorphism of topological bimodules
  \[ {}_{\la}\widehat{F}_\mu\cong {}_{\la}F_\mu\otimes_{ {\Lambda_\mu}} {\widehat{\Lambda}_\mu}\cong {\widehat{\Lambda}_\la}\otimes_{ {\Lambda_\la}} {}_{\la}F_\mu\]
  In particular, the ring $ {\widehat{F}_\la}$ is a Galois order for $\EuScript{M}= {\widehat{W}_\la}$ and the ring $\widehat{\Lambda}$.  
\end{Proposition}
\begin{proof}
  The tensor product
  $ {F_{\la}}\otimes_{ {\Lambda_\la}} {\widehat{\Lambda}_\la}$
  is the completion of $F_{\la}$ with respect to the topology with
  basis of 0 given by the 2-sided ideals
  $F_{\la} {\mathfrak{n}_{\lambda}^N}$.  Since $\Lambda\mathfrak{n}_{\lambda}\supset  {\mathfrak{m}_\la^k}$ for
some $k$, we have that \[F_\la \mathfrak{m}_\la^{kN}+
  \mathfrak{m}_\la^{kN} F_\la \subset F_{\la}\mathfrak{n}_{\lambda}^N\subset F_\la \mathfrak{m}_\la^{N}+
  \mathfrak{m}_\la^{N} F_\la \] which shows the equivalence of the
topologies, and thus the isomorphism of completions.
Faithful base change by a central subalgebra preserves the
properties of being a Galois order, so this follows from Lemma \ref{prop:Fla-order}.
\end{proof}

We can also use these results to understand the fiber for $U$ as well for any
principal Galois order.  By Lemma \ref{lem:FD}, we can choose a
principal flag order with $U=eFe$.  The
algebra $ {F_\la}$ contains the stabilizer $ {W_\la}$ and
its symmetrizing idempotent $e_\la$. 
As before, let $\gamma$ be the image of $\la$
in $\MaxSpec(\Gamma)$ and $\mathsf{m}_{\gamma}=\Gamma\cap \mathfrak{m}_{\la}$.  

It is worth noting that there is no obvious analogue of $\widehat{W}_{\la}$ and $F_{\la}$ in the context of $U$.  The closest analogue is the set $S(\mathsf{m}_{\gamma},\mathsf{m}_{\gamma})$ defined in \cite[\S 4.1]{futornyFibersCharacters2014}.  This is the subset of $\EuScript{M}=\widehat{W}/W$ such that $m\cdot w'\la=w\la$ for some $w,w'\in W$.  In this case, we have that $w^{-1}mw\in \widehat{W}_{\la}$.  Put differently, $S(\mathsf{m}_{\gamma},\mathsf{m}_{\gamma})$ is
given by the $W$-saturation of the image of $\widehat{W}_{\la}$, i.e. the union of all $W$-$W$ double cosets $WwW/W$ for $w\in \widehat{W}_{\la}$.  We have a surjective map $\widehat{W}_{\la}/W_{\la}\twoheadrightarrow S(\mathsf{m}_{\gamma},\mathsf{m}_{\gamma})$, but this is not necessarily injective: The image contains an element of each $W$-orbit, but is not $W$-invariant.  

However, there is a close relationship between $F_{\la}$ and the algebra $\widehat{U}_{\gamma}$.
Note that the ideal $\Lambda \mathsf{m}_{\gamma}$ has vanishing set given by the orbit $W\cdot \la$.  Thus, if we consider the completion of $\Lambda$ at this ideal, the result is $\oplus_{\la'\in W\cdot \la} \widehat{\Lambda}_{\la'}$.  Since passing invariants for $W$ is exact, it commutes with inverse limits.
Thus, the completion $\widehat{\Gamma}_\gamma$ of $\Gamma$ at the maximal ideal $\mathsf{m}_{\gamma}$ is isomorphic to \[\widehat{\Gamma}_\gamma\cong e_{\la}\widehat{\Lambda}_{\la}= \widehat{\Lambda}_{\la}^{W_\la}\cong\big(\bigoplus_{\la'\in W\cdot \la} \widehat{\Lambda}_{\la'}\big)^W.\] 

\begin{Lemma}\label{lem:U-F} 
The above isomorphism induces an isomorphism
$\widehat{U}_\gamma\cong e_\la  {\widehat{F}_\la}
e_\la$.   
\end{Lemma}
\begin{proof}
Consider the completion $\widehat{F}_{\gamma}=F/(F\mathsf{m}_{\gamma}^N+\mathsf{m}_{\gamma}^NF)$; as discussed above, since $ \mathsf{m}_{\gamma}$ is an ideal defining the orbit $W\cdot \la$, this decomposes as the sum $\bigoplus_{\la',\la''\in W\cdot \la}F/(F\mathfrak{m}_{\la'}^N+\mathfrak{m}_{\la''}^NF)$ by the Chinese Remainder theorem.  In particular, this means that as a $W\times W$-representation $\widehat{F}_{\gamma}=\operatorname{Ind}^{W\times W}_{W_{\la}\times W_{\la}}\widehat{F}_{\la}.$

The exactness of taking invariants shows that $\widehat{U}_{\gamma}$ is $e\widehat{F}_{\gamma}e$, or put differently, the invariants of $F_{\gamma}$ under the action of $W$ by left and right multiplication.  Thus, we obtain the desired isomorphism.  
\end{proof}

\subsection{Universal modules}
\label{sec:construction-simples}

While this is largely redundant with \cite{FOD}, it will be helpful to
explain how we construct simple \GT modules.

\begin{Definition}
  Fix an integer $N$.  The {\bf central universal \GT module} of weight $\la$
    and length $N$ is the quotient  $P^{(N)}_\la=F/F \nla_\la^N$.
\end{Definition}
Consider the quotient algebra  $F_\la^{(N)}:=
F_{\la}/F_{\la}\nla_\la^N$.  
\begin{Theorem}
  The module $P^{(N)}_\la$ is a \GT module such that
    \[ {\Wei_\la}(P^{(N)}_\la)\cong \End(P^{(N)}_\la)\cong F_\la^{(N)}.\]
      More generally, we have that
      \begin{equation}
      \Hom_{F}(P^{(N)}_\la,M)=\{m\in M\mid
      \nla_\la^Nm=0\}.\label{eq:represent}
    \end{equation}
\end{Theorem}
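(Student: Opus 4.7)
\emph{Proof plan.}
The Hom formula \eqref{eq:represent} is immediate from the universal property of the cyclic quotient $P^{(N)}_\la=F/F\mathfrak{n}_\la^N$: any $F$-linear map out is determined by the image $m$ of $\bar 1$, and the relation $F\mathfrak{n}_\la^N\cdot\bar 1=0$ imposes exactly $\mathfrak{n}_\la^N m=0$. Setting $M=P^{(N)}_\la$ identifies $\End(P^{(N)}_\la)$ with $\{m\in P^{(N)}_\la\mid \mathfrak{n}_\la^N m=0\}$. For the \GT property, I would use the inclusion $\Lambda\mathfrak{n}_\la\supset\mathfrak{m}_\la^k$ noted just before the statement to get $F\mathfrak{m}_\la^{kN}\subset F\mathfrak{n}_\la^N$, exhibiting $P^{(N)}_\la$ as a quotient of the length-$kN$ universal weight module $F/F\mathfrak{m}_\la^{kN}$, which is \GT by \cite[Lem.~3.2]{Hartwig}.

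The substantive content is then to identify both $\End(P^{(N)}_\la)$ and $\Wei_\la(P^{(N)}_\la)$ with $F_\la^{(N)}=F_\la/F_\la\mathfrak{n}_\la^N$ via the evaluation map $f\mapsto\bar f$. This map descends from $F_\la$ to $F_\la^{(N)}$ and lands in $\{m:\mathfrak{n}_\la^N m=0\}\subset\Wei_\la(P^{(N)}_\la)$ because $\mathfrak{n}_\la\subset\Lambda_\la$ is central in $F_\la$ (established earlier in the excerpt), so $\mathfrak{n}_\la^N f=f\mathfrak{n}_\la^N\in F\mathfrak{n}_\la^N$. The crucial step is then to show that every $\bar f\in\Wei_\la(P^{(N)}_\la)$ has a representative in $F_\la$, which simultaneously delivers the reverse inclusion $\Wei_\la\subset\{m:\mathfrak{n}_\la^N m=0\}$ and surjectivity of the evaluation map onto $\Wei_\la$.

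I would establish this representability by the support-shrinking trick used in the proofs of Lemmas \ref{lem:Fla-span} and \ref{lem:Fla-order}. Given $\bar f\in\Wei_\la$ and a support element $t\in\widehat{W}\setminus\widehat{W}_\la$, pick $p\in\mathfrak{n}_\la^N$ with $p(t^{-1}\la)\neq 0$ (possible because $t^{-1}\la$ and $\la$ have distinct images in $\MaxSpec(\Lambda_\la)$); then $\overline{fp}=0$ in $P^{(N)}_\la$ and $\overline{p^tf-fp}=p^t\bar f$, while on the level of $F$ the element $p^tf-fp$ has strictly smaller support (the coefficient of $t$ vanishes by direct calculation). Iterating yields an $F_\la$-representative of $Q\bar f$ for some $Q\in\Lambda$ that is a unit at $\la$; since $\mathfrak{m}_\la$ acts locally nilpotently on $\Wei_\la$, $Q$ acts invertibly on $\Wei_\la$, and its inverse --- an element of $\Lambda$ after termination on each weight vector --- maps back to an $F_\la$-representative of $\bar f$ itself, using $\Lambda F_\la\subset F_\la$.

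Injectivity of $F_\la^{(N)}\to\End(P^{(N)}_\la)$ finally amounts to $F_\la\cap F\mathfrak{n}_\la^N=F_\la\mathfrak{n}_\la^N$, which I would verify directly using the $\widehat{W}$-support decomposition of $\mathcal{F}=L\#\widehat{W}$ together with the $\widehat{W}_\la$-invariance of $\mathfrak{n}_\la$. The main obstacle throughout is the support-reduction step: unlike Lemma \ref{lem:Fla-span}, we quotient only by the one-sided ideal $F\mathfrak{n}_\la^N$, which forces the choice $p\in\mathfrak{n}_\la^N$ and the subsequent invertibility-on-weight-space argument to recover $\bar f$ from $Q\bar f$; once this is in place, the rest is formal.
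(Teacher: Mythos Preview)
Your proposal is correct and follows the same outline as the paper: the Hom formula, the \GT property via \cite[Lem.~3.2]{Hartwig}, surjectivity of $F_\lambda\to\Wei_\lambda(P^{(N)}_\lambda)$ by support-shrinking, the kernel computation $F_\lambda\cap F\mathfrak{n}_\lambda^N=F_\lambda\mathfrak{n}_\lambda^N$, and the $\End$ identification via centrality of $\mathfrak{n}_\lambda$ in $F_\lambda$. The only difference is that the paper simply cites Lemma~\ref{lem:Fla-span} for the surjectivity step, whereas you spell out the adaptation to the one-sided ideal $F\mathfrak{n}_\lambda^N$ and the recovery of $\bar f$ from $Q\bar f$; your explicit argument is exactly what underlies that citation.
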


\begin{proof}
  Equation \eqref{eq:represent} is a basic property of left ideals.
  This is a Gelfand-Tsetlin module by \cite[Lem. 3.2]{Hartwig}. 
  
Note that the map $ {F_\la}\to
 {\Wei_\la}(P^{(N)}_\la)$ is surjective by construction. Of course, the kernel of this
  map is $ {F_\la}\cap F \nla_\la^N= {F_\la}\nla_\la^N$.
  This shows that $ {\Wei_\la}(P^{(N)}_\la)\cong
      F_{\la}/F_{\la}\nla_\la^N$.  Since $\nla_\la^N$ is
      central in $F_{\la}$, it acts trivially on this weight space,
      and the identification with $\End(P^{(N)}_\la)$ follows from \eqref{eq:represent}.
    \end{proof}

Note that ``length $N$'' refers to the maximal length of a Jordan
block of an element of $\nla_\la$, not of $ {\mathfrak{m}_\la}$.
Since $\nla_\la$ is central in $ {F_\la}$,  the ideal
$\nla_\la^N$ acts trivially on $P^{(N)}_\la$.  More generally:
\begin{Lemma}
	The ideal $\mathfrak{n}_{\mu}^N$ acts trivially on the weight space $\Wei_\mu(P^{(N)}_\la)$.  
\end{Lemma} 
\begin{proof}
	Note that translation by $\mu-\la$ induces an automorphism $\sigma_{\mu,\la}\colon \Lambda\to \Lambda$, and that $\sigma_{\mu,\la}(\mathfrak{n}_{\la}^N)=\mathfrak{n}_{\mu}^N$.   We have  $\sigma_{\mu,\la}(g)w=wg$  for any $w\in {}_{\mu}\What_{\la}, g\in \Lambda^W$, so by linearity, $\sigma_{\mu,\la}(g)f=fg$ for any $f\in  {}_{\mu}F_{\la},  g\in \Lambda^W$.  Since $\mathfrak{n}_{\la}^N$ is generated by $W$-invariant elements, this implies that $\mathfrak{n}_{\mu}^Nf=f \mathfrak{n}_{\la }^N$ for any $f\in {}_{\mu}F_{\la}$.  
	
	Let $\bar{1}$ denote that image of $1\in F$ in $P^{(N)}_\la$
	The elements of $\Wei_\mu(P^{(N)}_\la)$ are precisely those of the form $f\cdot \bar 1$ for ${}_{\mu}F_{\la}$.  Thus, the commutation above shows that $\mathfrak{n}_{\mu}^Nf\cdot \bar 1=f\mathfrak{n}_{\la}^N\cdot \bar 1=0$, showing the desired vanishing.
\end{proof}
On the other hand the nilpotent length of the action of
$ {\mathfrak{m}_\mu}$ on $\Wei_\mu(P^{(N)}_\la)$ is
typically more than $N$; the argument above fails because the generators of $\mathfrak{m}_\la$ aren't $W$-invariant. 

It follows immediately from \cite[Th. 18]{FOD} that:
    
\begin{Theorem}\label{th:bijection}
  The map sending $S\mapsto  {\Wei_\la}(S)$ is a bijection between the isoclasses of simple \GT
  $F$-modules in the fiber over $\la$ and simple $F^{(1)}_\la$-modules.
\end{Theorem}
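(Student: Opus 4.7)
The plan is to assemble the bijection in two steps: first invoke \cite[Th. 18]{FOD} to identify simple Gelfand-Tsetlin modules in the fiber over $\la$ with simple discrete $\widehat{F}_\la$-modules, and then identify the latter with simple $F^{(1)}_\la$-modules by showing the central ideal $\mathfrak{n}_\la$ annihilates every simple of $\widehat{F}_\la$.

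For the first step, recall that by the representing property \eqref{eq:represent} (or its $\mathfrak{m}_\la$-analogue), taking the $\la$-generalized weight space is exact on the subcategory of GT modules and is controlled by $\widehat{F}_\la$: if $S$ is a simple GT module with $\Wei_\la(S)\neq 0$, then $\Wei_\la(S)$ is naturally a module over $\widehat{F}_\la$, and every element is killed by some power of $\mathfrak{m}_\la$, i.e.\ it is a discrete module. The cited theorem of Drozd--Futorny--Ovsienko then gives that $S\mapsto \Wei_\la(S)$ is a bijection between isoclasses of simple GT $F$-modules containing $\la$ in their support and isoclasses of simple discrete $\widehat{F}_\la$-modules, with inverse given by sending a simple discrete module $V$ to the unique simple quotient of the induced module $F\otimes_{\widehat{F}_\la}V$ (equivalently, to the simple head of the universal GT module generated by $V$).

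For the second step, the Lemma preceding this theorem identifies $\widehat{F}_\la$ as a Galois order with center $\widehat{\Lambda}_\la$, which is a complete local Noetherian ring with maximal ideal $\mathfrak{n}_\la\widehat{\Lambda}_\la$. For any simple discrete $\widehat{F}_\la$-module $V$, the center acts through a continuous character, which must kill $\mathfrak{n}_\la$: any element $m\in V$ satisfies $\mathfrak{m}_\la^N m = 0$ for some $N$, and since $\mathfrak{n}_\la\Lambda \supset \mathfrak{m}_\la^k$ for some $k$, the powers of $\mathfrak{n}_\la$ act locally nilpotently on $V$, forcing $\mathfrak{n}_\la V = 0$ on the simple $V$. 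Thus $V$ factors through the quotient $\widehat{F}_\la/\mathfrak{n}_\la\widehat{F}_\la \cong F^{(1)}_\la$, and conversely every simple $F^{(1)}_\la$-module pulls back to a simple discrete $\widehat{F}_\la$-module. Combining these two identifications yields the stated bijection.

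The only real subtlety is verifying that the central character argument applies, i.e.\ that $\mathfrak{n}_\la$ indeed acts locally nilpotently on any simple discrete $\widehat{F}_\la$-module; this is where the comparison $\mathfrak{n}_\la\Lambda \supset \mathfrak{m}_\la^k$ established in the preceding subsection is used, so no new estimates are required. Everything else is formal bookkeeping around \cite[Th. 18]{FOD}.
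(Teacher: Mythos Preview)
Your proof is correct and follows the same route as the paper, which simply declares the result ``follows immediately from \cite[Th. 18]{FOD}'' without further comment. You have spelled out the second step---that every simple discrete $\widehat{F}_\la$-module is killed by the central ideal $\mathfrak{n}_\la$ and hence factors through $F^{(1)}_\la$---which the paper leaves implicit; this is a genuine but routine detail, and your argument for it (using centrality of $\mathfrak{n}_\la$ and local nilpotence forced by discreteness) is the natural one.
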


Similarly, we can define a $U$-module $Q^{(N)}_\gamma=e P^{(N)}_\la e_\la= e(P^{(N)}_\la)^{ {W_\la}}$
such that
    \[ {\Wei_\gamma}(Q^{(N)}_\gamma)\cong \End(Q^{(N)}_\gamma)\cong
      U_\la^{(N)}=e_\la F_\la^{(N)} e_\la.\]
      More generally, we have that
      \begin{equation}
      \Hom_{F}(Q^{(N)}_\la,M)=\{n\in N\mid
      (\Lambda \nla_\la^N\cap \Gamma)m=0\}.\label{eq:representU}
    \end{equation}
    Applying \cite[Th. 18]{FOD} again shows that the map sending $S\mapsto  {\Wei_\gamma}(S)$ is a bijection between the isoclasses of simple \GT $U$-modules in the fiber over $\gamma$ and simple $U^{(1)}_\gamma$-modules.
    
\subsection{Dimension bounds}
In  \cite[Th. 4.12]{futornyFibersCharacters2014}, bounds are given on the number and dimensions of the irreducible representations in the fiber over a given maximal ideal $\gamma$.  In this section, we explain how related bounds can be recovered in our framework.

Let $\sigma(\la,\la)$ be the minimum number of generators of $\widehat{F}_\la$ as a right $\widehat{\Lambda}_{\la}$-module.  Note that the minimum number of generators of $F_{\la}$ is an upper bound on $\sigma(\la,\la)$.   Since $\widehat{F}_\la\otimes_{\widehat{\Lambda}_{\la}}\widehat{L}_\la$ is $ \#  {\widehat{W}_\la}$
dimensional over $L$, we have that $\sigma(\la,\la)\geq \#  {\widehat{W}_\la}$.  We will have equality if and only if $\widehat{F}_\la$ is free over $\widehat{\Lambda}_{\la}$, which will follow if $F_{\la}$ is free over $\Lambda$ (in particular if $F$ is free over $\Lambda$).  

Let us note how these statistics compare with those in \cite[\S 4]{futornyFibersCharacters2014}.  In \cite[4.1(c)]{futornyFibersCharacters2014}, the set $\widehat{\EuScript{M}}_\la=\{m\in \EuScript{M} \mid m\la\in W\cdot \la\}$ is considered.  If we write an element of $\widehat{W}_\la$ as $w^{-1}m$ for $w\in W, m\in \EuScript{M}$, then we will have $m \in \widehat{\EuScript{M}}_\la$.  The induced map $\widehat{W}_\la\to \widehat{\EuScript{M}}_\la$ has fiber given by the choices of $w\in W$ such that $w\la=m\la$; these form a single coset in $W/W_\la$.  This shows that $\# \widehat{\EuScript{M}}_\la=\frac{\#  {\widehat{W}_\la}}{\#  {W_\la}}$.
 
In any simple
$F^{(1)}_\la$-module, there is a vector where $ {\mathfrak{m}_\la}$ acts
trivially.  As discussed before, this means that:
\begin{Proposition}
  Any simple $F^{(1)}_\la$-module appears as a quotient of $F_\la/F_\la {\mathfrak{m}_\la}$.  If $ {\widehat{F}_\la}$ is a free module over
  $\Lambdahat$ (necessarily of rank $\#  {\widehat{W}_\la}$) then $\dim_{\Lambda/\mathfrak{m}_{\la}}
  F_\la/F_\la\mathfrak{m}_\la=\# \widehat{W}_\la$.  
\end{Proposition}
\begin{proof}
  The algebra $\Lambda_{\la}^{(1)}=\Lambda_{\la}/\mathfrak{n}_{\la}$ is a local commutative
subalgebra of the finite-length algebra $F^{(1)}_\la$. Thus, for any $F^{(1)}_{\la}$-module $M$, some power of the maximal ideal $\mathfrak{m}_{\la}$ kills $M$.  Let $n$ be maximal such that $\mathfrak{m}_{\la}^nM\neq 0$. In this case, $\mathfrak{m}_{\la}^nM$ is a nonzero subspace of $M$ killed by $\mathfrak{m}_{\la}$, so any non-zero element of this space induces a non-zero map $F_\la/F_\la\mathfrak{m}_\la\to M$, which is surjective if $M$ is simple.  
\end{proof}

Combining this with Theorem \ref{th:bijection} above, we have that:
\begin{Corollary}\label{cor:dimension-sum}
  The dimensions of the $\la$-weight spaces in the simples over $F$ in the
  fiber over $\la$ have sum $\leq \sigma(\la,\la)$,
  and thus $\leq \#  {\widehat{W}_\la}$ if $ {F_\la}$ is a free right module over
  $\Lambda$.  

 The dimensions of the $\gamma$-weight spaces in the simple $U$-modules in the
  fiber over $\gamma$ have sum $\leq \frac{\sigma(\la,\la)}{\#  {W_\la}}$,
  and thus $\leq \frac{\#  {\widehat{W}_\la}}{\#  {W_\la}}$ if $F_\la$ is a free right module over
  $\Lambda$.  
\end{Corollary}

We can generalize these results to be closer to \cite{futornyFibersCharacters2014}.
Let $\sigma(\mu,\la)$ be the minimal number of generators of ${}_{\la}F_{\mu}$ as a $\Lambda$-module.   
\begin{Corollary}\label{cor:dimension-connect}
  The dimensions of the $\la$-weight spaces in the simples over $F$ in the fiber over $\mu$ have sum $\leq \sigma(\mu,\la)$,
  and thus $\leq \# {}_{\la}\What_{\mu}$ if ${}_{\la}F_{\mu}$ is a free right module over
  $\Lambda$.  

 If $\gamma'$ lies under $\mu$, then the dimensions of the $\gamma$-weight spaces in the simple $U$-modules in the
  fiber over $\gamma'$ have sum $\leq \frac{\sigma(\mu,\la)}{\#  {W_\la}}$,
  and thus $\leq \frac{\# {}_{\la}\What_{\mu}}{\#  {W_\la}}$ if ${}_{\la}F_{\mu}$ is a free module over $\Lambda$.  
\end{Corollary}
 
 \begin{Remark}
   In \cite[Th. 4.12(c)]{futornyFibersCharacters2014}, Futorny and Ovsienko show similar bounds but using a slightly different looking statistic $\# (W\backslash  S(\mathsf{m}_\mu,\mathsf{m}_{\la}))$.  In \cite[Th. 4.1(c)]{futornyFibersCharacters2014}, they show that this is less than or equal to the size of the set $\{m\in \EuScript{M} \mid m\mu \in W\cdot \la\}$.  As mentioned above, sending $wm\mapsto m$ is a $\#  {W_\la}$-to-1 map from ${}_{\la}\What_{\mu}$ to the set $\{m\in \EuScript{M} \mid m\mu \in W\cdot \la\}$, so \cite[Th. 4.1(c)]{futornyFibersCharacters2014} can be rewritten as  \begin{equation}\label{eq:inequal}\# (W\backslash  S(\mathsf{m}_\mu,\mathsf{m}_{\la}))\leq \frac{\#  {\widehat{W}_\la}}{\#  {W_\la}}.\end{equation}
   While we have not found an example where this inequality is strict, it seems likely that they exist.  The explanation for this difference between these bounds is that there could potentially be $F$-modules $M$ cyclically generated by a vector of weight $\mu$ such that $eM$ is not cyclically generated over $U$.  Such a module exists for $F=\widehat{W}\# \Lambda$ if and only if the inequality \eqref{eq:inequal} is strict for some $\la$.  
 \end{Remark}

\subsection{Weightification and canonical modules}
\label{sec:weight-canon-modul}

There is another natural way to try to construct Gelfand-Tsetlin
modules.  Consider any $F$-module $M$, and fix an
$\What$-invariant subset $\mathscr{S}\subset \MaxSpec
(\Lambda)$.

\begin{Definition}
  Consider the sums
  \[M^{\mathscr{S}}=\bigoplus_{\la\in \mathscr{S}} \{m\in M \mid
    \nla_\la m=0\}\qquad M_{\mathscr{S}} =\bigoplus_{\la\in \mathscr{S}}  M/\nla_\la M\]
\end{Definition}

We can define actions of $F$ on these sums as follows: given $f\in F$, the $\Lambda$-module $ Q_{f,\la}=\Lambda f\Lambda/\Lambda f\Lambda\mathfrak{n}_{\la}$ is finite length, and thus the sum of finitely many weight spaces 
\[Q_{f,\la}=\bigoplus_{i=1}^m\Wei_{\mu_i}(Q_{f,\la}).\]

By \cite[Th. 4.7]{futornyFibersCharacters2014}, we can write $f=f_{(1)}+f'$ where $f_{(1)}\in {}_{\mu_1}F_{\la}, f'\in \mathfrak{n}_{\mu_1}f+f\mathfrak{n}_{\la}$; applying this inductively, we can write \[f=f_{(1)}+f_{(2)}+\cdots +f_{(k)}+f_0\qquad\text{where}\qquad   f_{(i)}\in {}_{\mu_i}F_{\la},\quad  f_0\in F\mathfrak{n}_{\la}.\]  The elements $f_{(i)}$ are unique up to the addition of an element of ${}_{\mu_i}F_{\la}\cap (F\mathfrak{n}_{\la}+\mathfrak{n}_{\mu_1}F)={}_{\mu_i}F_{\la}\mathfrak{n}_{\la}$.  
Acting by $f_{(i)}$ gives natural maps 
\[\{m\in M \mid
    \mathfrak{n}_\la m=0\}\to \{m\in M \mid
    \mathfrak{n}_{\mu_i} m=0\}\qquad  M/\mathfrak{n}_\la M\to  M/\mathfrak{n}_{\mu_i} M;\] these maps are independent of the choice of $f_{(i)}$ since ${}_{\mu_i}F_{\la}\mathfrak{n}_{\la}$ acts trivially in both cases.  

\begin{Theorem}
  The ring $F$ acts on $M^{\mathscr{S}}$ and $M_{\mathscr{S}}$ by the formula $f\cdot m=\sum_{i=1}^kf_{(i)}m$ and this module structure is Gelfand-Tsetlin.  
\end{Theorem}
Note that even if $M$ is a finitely generated module, the modules
$M^{\mathscr{S}}$ and $M_{\mathscr{S}}$ may not be finitely generated,
though the individual weight spaces
\[\Wei_\la(M^{\mathscr{S}})=\{m\in M \mid
  \nla_\la m=0\}\qquad \Wei_\la(M_{\mathscr{S}} )=M/\nla_\la
  M\] will be finitely generated over
$\Lambda^{(1)}_\la=\Lambda/\Lambda\nla_\la$.
  \begin{proof}
  {\bf The module action is well-defined}: We need to check that $f(gm)=(fg)m$ for all $f,g\in F$, and $m\in M^{\mathscr{S}}$ or $M_{\mathscr{S}}$.  Without loss of generality, we can assume that $f,g$ both have $m=1$, i.e., that $gm$ is a weight vector, as is $f(gm)$.  That is $f=f_{(1)}+f_0,g=g_{(1)}+g_0$ where \[f_{(1)}\in {}_{\nu}F_{\mu}\qquad g_{(1)}\in {}_{\mu}F_{\la}\qquad  f_0\in F\mathfrak{n}_{\mu}\qquad g_0\in F\mathfrak{n}_{\la}.\]
  for some $\nu, \mu,\la$, and that $
    \mathfrak{n}_{\la} m=0$ or  $m\in M/\mathfrak{n}_\la M$.  In either case, $fg=f_{(1)}g_{(1)} +f_{(1)}g_{0}+f_{0}g_{(1)}+f_0g_0$.  The first term lies in ${}_{\nu}F_{\la}$. and the last three all lie in $F\mathfrak{n}_{\la}$.  Thus, we have:
    \[f(gm)=f(g_{(1)}m)=f_{(1)}g_{(1)}m=(fg)m\] completing the proof that the module action is well-defined.  
    
    {\bf The module is Gelfand-Tsetlin}: By construction, $\Wei_{\la}(M^{\mathscr{S}})=\{m\in M \mid
    \nla_\la m=0\}$ and $\Wei_{\la}(M_{\mathscr{S}})=M/\mathfrak{n}_\la M$, and these modules are the direct sums of these spaces by construction.
\end{proof}

We could similarly consider ``thicker'' versions of these modules
where we replace $\nla_\la$ with powers of this ideal, and
direct/inverse limits of the resulting modules.  Since we have no
application in mind for these modules, we will leave discussion of
them to another time.

One particularly interesting case is
$M=\Lambda$ itself.  In this case, $\Lambda_{\mathscr{S}}$ is a
\GT module such that $\Wei_\la(\Lambda_{\mathscr{S}})=\Lambda^{(1)}_\la$ for
all $\la\in \mathscr{S}$.    The same module has been constructed by
Mazorchuk and Vishnyakova \cite[Th. 4]{MVHC}.  The dual version of this construction
given by taking the vector space dual
$\Lambda^*=\Hom_{\mathbbm{k}}(\Lambda, \mathbbm{k})$ for some
subfield $\mathbbm{k}$ and considering $(\Lambda^*)^{\mathscr{S}}$ has
been studied by several authors, including Early-Mazorchuk-Vishnyakova
\cite{EMV}, Hartwig \cite{Hartwig} and
Futorny-Grantcharov-Ramirez-Zadunaisky \cite{FGRZGalois};  in
particular, it appears to the author that
$e(\Lambda^*)^{\mathscr{S}}$ is precisely the $U=eFe$ module $V(\Omega, T(v))$
defined in \cite[Def. 7.3]{FGRZGalois} when
$\mathscr{S}=\What\cdot v$ and $\Omega$ is a base of the group
$ {\widehat{W}_\la}$ for any $\la\in \mathscr{S}$.

Based on the structure of this module, we can construct a
``canonical'' module as in \cite{EMV, Hartwig}; the
author is not especially fond of this name as the embedding of
$F$ in $\calF$ is not itself canonical if the algebra
$F$ is the object of interest.  For example,
$U(\mathfrak{gl}_n)$ has an embedding into $\calF$ for each
orientation of the linear quiver, each with its own
notion of ``canonical module.'' 

For every $\la\in
\mathscr{S}$, we can consider the submodule $C_\la'$ of $\Lambda_{\mathscr{S}}$
generated by $\Wei_{\la}(\Lambda_{\mathscr{S}})$ which is clearly
  finitely (in fact, cyclically) generated.

\begin{Lemma}
  The submodule $C_\la'$ has a unique simple quotient $C_\la$, and corresponds to the unique simple quotient of $\Lambda^{(1)}_\la$ as a $F^{(1)}_\la$-module under Theorem \ref{th:bijection}.
\end{Lemma}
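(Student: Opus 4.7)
The plan is to reduce everything to a statement about $\gls{La1la}$ as a module over $F^{(1)}_\la$, and then invoke Theorem~\ref{th:bijection}. The first main step is to show that $\gls{La1la}$ has a unique simple quotient $V_\la$ as an $F^{(1)}_\la$-module. Since $\gls{Whatla}_\la$ fixes $\la$ and $\gls{Lambdala}_\la$ is precisely $\gls{Lambda}^{\gls{Whatla}_\la}$, the fiber of $\Spec\gls{Lambda}\to\Spec\gls{Lambdala}_\la$ over $\gls{nla}_\la$ is set-theoretically the single closed point $\la$, so $\gls{La1la}=\gls{Lambda}/\gls{Lambda}\gls{nla}_\la$ is a local ring whose maximal ideal $\overline{\mathfrak{m}}$ is the image of $\gls{mla}$. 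Because $\gls{La1la}$ is cyclic over $F^{(1)}_\la$ with generator $\bar{1}$, and the $\gls{Lambda}$-action on it factors through multiplication in the ring $\gls{La1la}$ itself, any proper $F^{(1)}_\la$-submodule $V$ must avoid units of $\gls{La1la}$: if $v\in V$ were not in $\overline{\mathfrak{m}}$, then $v$ would be a unit, and lifting $v^{-1}$ to $\tilde{u}\in\gls{Lambda}$ and acting on $v$ would place $\bar{1}$ in $V$, forcing $V=\gls{La1la}$. Hence every proper $F^{(1)}_\la$-submodule lies in $\overline{\mathfrak{m}}$, the sum of all such submodules is itself proper, and we obtain a unique maximal $F^{(1)}_\la$-submodule whose quotient is a simple module $V_\la$.

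The second step is to identify the simple quotients of $C_\la'$ using Theorem~\ref{th:bijection}. Let $C_\la$ denote the simple \GT module in the fiber over $\la$ corresponding to $V_\la$ via Theorem~\ref{th:bijection}; I aim to show every simple quotient of $C_\la'$ is isomorphic to $C_\la$. For any simple quotient $S$ of $C_\la'$, the fact that $C_\la'$ is generated as an $F$-module by $\bar{1}\in \gls{La1la}=\Wei_\la(\gls{Lambda}_{\mathscr{S}})$ forces the image of $\bar{1}$ in $S$ to be nonzero, so $\Wei_\la(S)\neq 0$ and $S$ sits in the fiber over $\la$. Applying the exact functor $\Wei_\la$ to the surjection $C_\la'\twoheadrightarrow S$ yields a surjection of $F^{(1)}_\la$-modules $\gls{La1la}=\Wei_\la(C_\la')\twoheadrightarrow \Wei_\la(S)$; by Theorem~\ref{th:bijection} the target is simple, so by the uniqueness established in step one it is isomorphic to $V_\la$. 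A second application of Theorem~\ref{th:bijection} then gives $S\cong C_\la$. Existence of at least one simple quotient is automatic: $C_\la'$ is nonzero (it contains $\bar{1}$) and cyclic, so Zorn's lemma yields a maximal proper $F$-submodule.

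The main obstacle will be the unit-avoidance argument in step one. The standard local-ring argument only controls $\gls{La1la}$-submodules, but what we need is the analogous statement for $F^{(1)}_\la$-submodules, where elements coming from $\gls{Whatla}_\la\subset F^{(1)}_\la$ act non-commutatively. The key observation is that the unit-avoidance argument already succeeds using only the $\gls{Lambda}$-subaction of $F^{(1)}_\la$, so the non-commutative part of $F^{(1)}_\la$ does not obstruct the argument and the local ring structure of $\gls{La1la}$ is enough to finish. Once this is in hand, every remaining step is a formal consequence of the exactness of $\Wei_\la$ and the bijection in Theorem~\ref{th:bijection}.
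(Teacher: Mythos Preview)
Your core idea matches the paper's: both pivot on the fact that $\Lambda^{(1)}_\la$ is a local ring, so its unique maximal ideal $\overline{\mathfrak m}=\mathfrak m_\la/\mathfrak n_\la$ contains every proper $F^{(1)}_\la$-submodule. Your step~1 establishes exactly this, and that part is fine.

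The gap is in step~2. You show that every simple quotient of $C_\la'$ is \emph{isomorphic} to $C_\la$, but that is weaker than what the lemma asserts: ``unique simple quotient'' means $C_\la'$ has a unique maximal proper submodule, i.e.\ its head is simple. A cyclic module can have several distinct maximal submodules with pairwise isomorphic simple quotients (think of $L\oplus L$ for a simple $L$, which is cyclic over $\End(L\oplus L)$). Your argument via Theorem~\ref{th:bijection} does not rule this out. The fix is immediate from your own step~1 and is exactly what the paper does: for any proper $F$-submodule $M\subset C_\la'$, the weight space $\Wei_\la(M)$ is a proper $F^{(1)}_\la$-submodule of $\Lambda^{(1)}_\la$ (since $\Wei_\la(C_\la')=\Lambda^{(1)}_\la$ generates $C_\la'$), hence $\Wei_\la(M)\subset\overline{\mathfrak m}$. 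Then for any two proper submodules $M_1,M_2$ one has $\Wei_\la(M_1+M_2)=\Wei_\la(M_1)+\Wei_\la(M_2)\subset\overline{\mathfrak m}$, so $M_1+M_2$ is again proper. This gives the unique maximal submodule directly, and your step~2 then identifies the resulting simple quotient with the one attached to $V_\la$ under Theorem~\ref{th:bijection}.
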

\begin{proof}
  Given any proper submodule $M\subset C_\la'$, consider $M\cap
  \Wei_{\la}(\Lambda_{\mathscr{S}})\subset \Lambda^{(1)}_\la$.  This must be a proper submodule, because $ \Wei_{\la}(\Lambda_{\mathscr{S}})$
  generates $C_\la'$.  As a $\Lambda^{(1)}_\la$-module,  $\Lambda^{(1)}_\la$ has a unique
  maximal submodule, the ideal $ {\mathfrak{m}_\la}/\nla_\la$, which
  thus contains $M\cap
  \Wei_{\la}(\Lambda_{\mathscr{S}})$.  Thus, the sum of two proper submodules has the same property and is again proper.  This shows that there is a unique maximal proper submodule, and thus a unique simple quotient.  
\end{proof}
In the terminology of
\cite{Hartwig}, the canonical module is the right module $C^*_\la$
obtained by dualizing this construction with respect to a subfield $\mathbbm{k}$.
Since we avoid dualizing, our result here is both a bit stronger and a bit weaker
  than \cite[Thm. 3.3]{Hartwig}.  That result does not depend on the
  finiteness of $ {\widehat{W}_\la}$, though as a result, one pays the
  price of not knowing whether $\Wei_{\la}$ is finite-dimensional.  However, our construction applies when $\Lambda$ is arbitrary,
  making no assumption on characteristic or linearity over a field.

A natural question, first posed to us by Mazorchuk, is which simple GT modules appear as canonical modules.  In particular, one could hope that each simple module is the canonical module of some maximal ideal; jointly with Silverthorne, we have shown that this is the case for OGZ algebras with their usual Galois order structure \cite[Th. A(1)]{silverthorneGelfandTsetlinModules2024}.  This heavily uses the combinatorics of that special case and does not readily generalize to other cases.

\subsection{Interaction between weight spaces}
\label{sec:inter-betw-weight}

In this section, we continue to assume that every weight considered
has finite stabilizer in $\What$.

Whereas in the previous 3 subsections, we focused attention on a single weight, in this section we study how we can understand the classification of modules by considering how different weights interact.
For now, fix two different weights $\lambda, \mu\in \MaxSpec(\Lambda)$.
Recall that ${}_{\la}\What_{\mu}$ is the set of elements of $\What$ such that
$w\cdot \mu=\lambda$ and ${}_{\la}F_{\mu}=F\cap K\cdot {}_{\la}\What_{\mu}$. This is a
$ {F_\la}\operatorname{-}F_{\mu}$-bimodule, and we have a multiplication
${}_{\la}F_{\mu}\otimes_{F_{\mu}}{}_{\mu}F_{\nu}\to {}_{\la}F_{\nu}$.
Thus, we can define a matrix algebra:
\begin{equation}\label{eq:Flas}
  F(\la_1,\dots, \la_k)=
  \begin{bmatrix}
    F_{\la_1} & {}_{\la_1}F_{\la_2} & \cdots & {}_{\la_1}F_{\la_k} \\
      {}_{\la_2}F_{\la_1} & F_{\la_2} & \cdots & {}_{\la_2}F_{\la_k}\\
      \vdots & \vdots & \ddots & \vdots\\
        {}_{\la_k}F_{\la_1} & {}_{\la_k}F_{\la_2} & \cdots & F_{\la_k} 
  \end{bmatrix}
\end{equation}
\notation{$F(\mathsf{S})$}{The matrix algebra of \eqref{eq:Flas}.}
More generally, for any subset $\mathsf{S}\subset \MaxSpec(\Lambda)$,
we let $F(\mathsf{S})$ be the direct limit of this matrix
algebra over all finite subsets.  Note that if $\mathsf{S}$ is not
finite, this is not a unital algebra, but is locally unital.  
This acts by natural transformations on the functor $\bigoplus_{\la\in \mathsf{S}}
 {\Wei_{\la}}$.

Note that if $\la$ and $\mu$ are not in the same orbit of
$\What$, then ${}_{\la}F_{\mu}=0$, so $F(\mathsf{S})$
naturally breaks up as a direct sum over the different $\What$-orbits that these weights lie in.

If $\la$ and $\mu$ are in the same orbit, then we have a canonical
isomorphism $ {\Lambda_\la}\cong \Lambda_\mu$ induced by any element of
${}_{\la}\What_{\mu}$, which identifies the ideals $\nla_\la$ and
$\nla_\mu$.  For $\mathscr{S}$ a single $\What$-orbit, we can
identify these with a single algebra
$\mathscr{Z}(\mathscr{S})\supset \mathfrak{n}$.

\begin{Proposition}
  If $\mathsf{S}\subset \mathscr{S}$, then $\mathscr{Z}(\mathscr{S})$ is the center of $ {F(\mathsf{S})}$.
\end{Proposition}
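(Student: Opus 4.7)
The plan is to establish $\mathscr{Z}(\mathscr{S}) \subseteq Z(F(\mathsf{S}))$ and its converse separately. The engine driving both directions is the intertwining identity $f \cdot a = \sigma_{\la\mu}(a) \cdot f$ for $f \in {}_\la F_\mu$ and $a \in \Lambda_\mu$, where $\sigma_{\la\mu}\colon \Lambda_\mu \xrightarrow{\sim} \Lambda_\la$ is the canonical isomorphism induced by any $w \in {}_\la W_\mu$. This is independent of the choice because any two such elements differ by an element of $\widehat{W}_\mu$, which fixes $\Lambda_\mu = \Lambda^{\widehat{W}_\mu}$ pointwise; the identity itself is the restatement of the smash-product rule $w \cdot a = w(a) \cdot w$ already used in the proof of weightification above. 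Under the canonical identifications $\Lambda_\la \cong \mathscr{Z}(\mathscr{S})$ that assign to $z \in \mathscr{Z}(\mathscr{S})$ elements $z_\la \in \Lambda_\la$, each $\sigma_{\la\mu}$ sends $z_\mu \mapsto z_\la$.

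For the forward inclusion, I embed $z \in \mathscr{Z}(\mathscr{S})$ diagonally as $\sum_{\la \in \mathsf{S}} z_\la$, with each $z_\la \in \Lambda_\la \subseteq F_\la$, and check that this commutes with every matrix entry. On the diagonal, commutation with $F_\la$ is exactly \cite[Th.~4.1(4)]{FOgalois} (combined with Lemma~\ref{lem:Fla-order}), which identifies the center of the Galois order $F_\la$ with $\Lambda_\la$. Off the diagonal, commutation with $f \in {}_\la F_\mu$ reads $z_\la f = f z_\mu$, and this follows from the intertwining identity: $f z_\mu = \sigma_{\la\mu}(z_\mu) f = z_\la f$.

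For the converse, let $z \in Z(F(\mathsf{S}))$. Commuting with each local idempotent $1_\la \in F_\la$ forces $z$ to be diagonal, $z = \sum z_\la$; commuting with $F_\la$ itself and invoking \cite[Th.~4.1(4)]{FOgalois} and Lemma~\ref{lem:Fla-order} then gives $z_\la \in \Lambda_\la$. It remains to show that the $z_\la$ are intertwined by the $\sigma_{\la\mu}$, which I extract from commutation with any $f \in {}_\la F_\mu$: this yields $(z_\la - \sigma_{\la\mu}(z_\mu)) f = 0$, and since ${}_\la F_\mu$ sits inside $\mathcal{F} = L\#\widehat{W}$ and is therefore torsion-free as a left $\Lambda$-module, cancellation by any non-zero $f$ forces $z_\la = \sigma_{\la\mu}(z_\mu)$. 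The main obstacle is to ensure that ${}_\la F_\mu \neq 0$ whenever $\la, \mu \in \mathscr{S}$, so that this cancellation has content; this should follow by rerunning the polynomial-support-pruning argument of Lemma~\ref{lem:Fla-order} applied to a decomposition of some $w \in {}_\la W_\mu \subseteq KF = \mathcal{F}$, showing $K \cdot {}_\la F_\mu = K \cdot {}_\la W_\mu \neq 0$.
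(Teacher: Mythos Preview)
Your argument is correct and takes a different route from the paper's. The paper localizes: since each $F_\la \otimes_\Gamma K \cong \widehat{W}_\la \ltimes L$ and each ${}_{\la}F_\mu \otimes_\Gamma K$ is the invertible bimodule realizing an isomorphism between two such copies, $F(\mathsf{S}) \otimes_\Gamma K$ is Morita equivalent to $\widehat{W}_\la \ltimes L$, whose center is the diagonal copy of $L^{\widehat{W}_\la}$; then $Z(F(\mathsf{S})) = F(\mathsf{S}) \cap Z\big(F(\mathsf{S}) \otimes_\Gamma K\big) = \mathscr{Z}(\mathscr{S})$. You instead stay at the integral level and verify centrality block by block, using the intertwining identity $f\,a = \sigma_{\la\mu}(a)\,f$ for the forward inclusion and $\Lambda$-torsion-freeness of ${}_\la F_\mu \subset \mathcal{F}$ for the cancellation step in the converse. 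The paper's approach is shorter and reduces everything to the standard computation of the center of a skew group ring; yours is more hands-on and makes the requirement ${}_\la F_\mu \neq 0$ explicit, whereas the paper uses the same fact only implicitly in asserting that the off-diagonal blocks become invertible bimodules after tensoring with $K$. Your proposed verification of that non-vanishing, by rerunning the support-pruning argument of Lemma~\ref{lem:Fla-order} for $w \in {}_\la W_\mu$, goes through without change.
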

\begin{proof}
  As discussed before, we have an isomorphism $ {F_\la}\otimes_{\Gamma}K\cong L \# {\widehat{W}_\la}$, and ${}_{\la_1}F_{\la_2}
  \otimes_{\Gamma}K$ is just the bimodule induced by an isomorphism between these algebras.  Thus $  {F(\mathsf{S})}\otimes K$ is Morita equivalent to $  L\#  {\widehat{W}_\la}$, and its center is
the subfield $L^{  {\widehat{W}_\la}}\subset L$.  We have that $Z(  F(\la_1,\dots,
  \la_k))= {F(\mathsf{S})}\cap Z(  L\# {\widehat{W}_\la})=\mathscr{Z}(\mathscr{S})$.  
\end{proof}
Let
\begin{align*}
 {F^{(N)}(\mathsf{S})}&=  {F(\mathsf{S})}/\mathfrak{n}^N
 {F(\mathsf{S})}\\ \widehat{F}(\mathsf{S})&= {F(\mathsf{S})}\otimes_{\Lambda_{\mathscr{S}}}\Lambdahat_{\mathscr{S}}.
\end{align*}

As a consequence of \cite[Th. 17]{FOD}, we can easily extend Theorem
\ref{th:bijection}
to incorporate any number of weight spaces. Since $ \mathsf{S}$ might be infinite, the module $\oplus_{\la\in \mathsf{S}}\Wei_{\la}(S)$ might not be finite-length as a module over $\Lambda$.  We call a module $M$ over $\widehat{F}(\mathsf{S})$ locally finite-length if for each idempotent $1_{\la}\in \hat{F}_{\la}$, then image $1_{\la}M$ is finite-length.  
\begin{Theorem}
  The simple \GT $F$-modules $S$ such that $ {\Wei_{\la}}(S)\neq 0$ for
  some $\la\in \mathsf{S}$ are in bijection with locally finite-length simple modules over $ {F^{(1)}( \mathsf{S})}$, sending $S\mapsto \bigoplus_{\la\in \mathsf{S}}
 {\Wei_{\la}}(S)$.
\end{Theorem}

\notation{$\GTc(\mathsf{S})$}{The category of all \GT modules modulo
the subcategory of modules such that $ {\Wei_{\la}}(M)=0$ for all $\la\in \mathsf{S}$.}
We can also extend this to an equivalence of categories:
let $\GTc(\mathsf{S})$ be the category of all \GT modules modulo
the subcategory of modules such that $ {\Wei_{\la}}(M)=0$ for all $\la\in \mathsf{S}$. 
\begin{Theorem}\label{th:las-bijection}
  The functor $S\mapsto \oplus_{i=1}^k  {\Wei_{\la_i}}(S)$ gives an
  equivalence between $\GTc(\mathsf{S})$ and locally finite-length modules over the
  completion $\widehat{F}(\mathsf{S})$ which are continuous with respect to the discrete topology.  
\end{Theorem}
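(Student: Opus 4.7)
The plan is to construct a quasi-inverse $\Phi$ to the functor $\Psi := \bigoplus_{i=1}^k \Wei_{\la_i}(-)$ built from the universal modules $P^{(N)}_{\la_i}$ of Section \ref{sec:universal-gt-modules}, and then verify the two compositions are naturally isomorphic to the identity functors on the respective (quotient) categories. First I verify that $\Psi$ really lands in finite-dimensional $\widehat{F}(\mathsf{S})$-modules: under the standing assumption that each stabilizer $\widehat{W}_{\la_i}$ is finite, each $\Wei_{\la_i}(M)$ of a finitely generated \GT module is finite-dimensional (by the quotient description in Section \ref{sec:universal-gt-modules}), and the natural transformations by elements of $F(\mathsf{S})$ described just before the theorem statement assemble into an $F(\mathsf{S})$-action on $\Psi(M)$; since $\Psi(M)$ is finite-dimensional, the central ideal $\mathfrak{n}\subset \mathscr{Z}(\mathscr{S})$ acts nilpotently, so the action factors through $F^{(N)}(\mathsf{S})$ for $N\gg 0$ and extends uniquely to $\widehat{F}(\mathsf{S})$.

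For the quasi-inverse, set $P^{(N)}(\mathsf{S}) := \bigoplus_{i=1}^k P^{(N)}_{\la_i}$; by the representability \eqref{eq:represent} together with Lemma \ref{lem:Fla-span}, this is naturally an $(F, F^{(N)}(\mathsf{S}))$-bimodule, with the right action identifying $\End_F(P^{(N)}(\mathsf{S}))^{\mathrm{op}}\cong F^{(N)}(\mathsf{S})$. For $V$ finite-dimensional over $\widehat{F}(\mathsf{S})$, choose $N$ large enough that $\mathfrak{n}^N V=0$ and define $\Phi(V) := P^{(N)}(\mathsf{S}) \otimes_{F^{(N)}(\mathsf{S})} V$. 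Independence from the choice of $N$ follows from the compatible surjections $P^{(N')}_{\la} \twoheadrightarrow P^{(N)}_{\la}$ for $N'\geq N$, and the resulting \GT module has weight support contained in the orbit $\widehat{W}\cdot \mathsf{S}$.

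To establish $\Psi\circ \Phi\cong \mathrm{id}$, one computes $\Wei_{\la_i}(\Phi(V))\cong V_{\la_i}$ summand by summand from \eqref{eq:represent}, using that $\Wei_{\la_i}(P^{(N)}_{\la_j})$ realizes the $(i,j)$-entry of the matrix algebra $F^{(N)}(\mathsf{S})$. To establish $\Phi\circ\Psi\cong \mathrm{id}$ in $\GTc(\mathsf{S})$, I examine the counit $c_M\colon P^{(N)}(\mathsf{S})\otimes_{F^{(N)}(\mathsf{S})} \Psi(M) \to M$: for $N$ large its image contains every vector in $\Wei_{\la_i}(M)$ annihilated by $\mathfrak{n}_{\la_i}^N$, which is all of $\Wei_{\la_i}(M)$, so the cokernel has trivial $\la_i$-weight space for each $i$; a parallel computation shows $\ker(c_M)$ has vanishing weight space at each $\la_i$. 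Both terms therefore lie in the Serre subcategory killed on passage to $\GTc(\mathsf{S})$.

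The main obstacle is this final verification of the counit — controlling the weight spaces of both kernel and cokernel — which is essentially the content of \cite[Th. 17]{FOD}; additional care is required for weights of $M$ lying in the same $\widehat{W}$-orbit as some $\la_i$ but not equal to any $\la_j$, since $F(\mathsf{S})$ does not directly see such weights, yet they are precisely what is annihilated on passage to the Serre quotient $\GTc(\mathsf{S})$.
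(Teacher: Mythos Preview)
Your proposal is correct and is essentially the natural argument the paper has in mind. Note that the paper does not actually supply a proof of Theorem~\ref{th:las-bijection}: it is stated immediately after the bijection of simples with the phrase ``We can also extend this to an equivalence of categories,'' relying implicitly on the machinery of \cite[Th.~17--18]{FOD}. Your write-up is precisely the explicit unpacking of that machinery: build the projective generator $P^{(N)}(\mathsf{S})=\bigoplus_i P^{(N)}_{\la_i}$, identify its endomorphism ring with $F^{(N)}(\mathsf{S})$, and use the induced $\Hom$/tensor adjunction as the equivalence, checking that the unit and counit have kernel and cokernel killed by all $\Wei_{\la_i}$ and hence vanish in the Serre quotient $\GTc(\mathsf{S})$.

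Two small points worth tightening. First, the identification $\End_F\big(P^{(N)}(\mathsf{S})\big)\cong F^{(N)}(\mathsf{S})$ (with or without the ${}^{\mathrm{op}}$, depending on conventions) for the off-diagonal blocks requires the analogue of Lemma~\ref{lem:Fla-span} for ${}_{\la_i}F_{\la_j}$, namely that ${}_{\la_i}F_{\la_j}$ surjects onto $\Wei_{\la_i}(P^{(N)}_{\la_j})$; this follows by the same localization argument. Second, your parenthetical about the ${}^{\mathrm{op}}$ is a genuine bookkeeping issue: right multiplication by $r\in F_{\la}$ gives $\phi_r\circ\phi_s=\phi_{sr}$, so the right $F^{(N)}(\mathsf{S})$-action on $P^{(N)}(\mathsf{S})$ identifies $F^{(N)}(\mathsf{S})$ with $\End_F(P^{(N)}(\mathsf{S}))^{\mathrm{op}}$, which is what you want for the tensor product to land in left $F$-modules. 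Neither point affects the validity of the argument.
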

By \cite[Th. 4.7]{fillmoreCategoryHarishChandra2023}, if we remove the assumption that stabilizers are finite (or consider modules which are not finitely generated), some care is needed about topologies;  representations which are continuous in the discrete topology will correspond to {\it strong} Gelfand-Tsetlin modules, i.e. those where $\mathfrak{m}_{\la}^N\Wei_{\la}(M)=0$ for $N\gg 0$.  With our assumptions, Gelfand-Tsetlin modules are automatically strong.    

As before, let $\mathscr{S}$ be a $\What$-orbit in
$\MaxSpec(\Lambda)$ and let $\GTc(\mathscr{S})$ the category of \GT modules where if $\la\notin \mathscr{S}$, we have $ {\Wei_{\la}}(M)=0$.
\begin{Definition}\label{def:complete}
  We call a set of weights $\mathsf{S}\subset \mathscr{S}$ {\bf
  complete} for the orbit $\mathscr{S}$ if $\GTc(\mathsf{S})=\GTc(\mathscr{S})$, that is, if any module $M$ with
$ {\Wei_{\la_i}}(M)=0$ for all $i$ satisfies $ {\Wei_\la}(M)=0$ for all
$\la\in \mathscr{S}$.

A finite set $\mathsf{S}$ is complete for the orbit
$\mathscr{S}$,  if and only if $ \GTc(\mathscr{S})\cong
 {\widehat{F}(\mathsf{S})}\operatorname{-fdmod}$.  
\end{Definition}

Of course, many readers will be more interested in understanding
modules of the original principal Galois order.  For simplicity,
assume that $\mathsf{S}$ only contains at most one element of each
$W$-orbit.  We can derive the weight
spaces of $U$ from those of $F$ by taking invariants under the
stabilizer $ {W_\la}$.  Let $e_{\la}$ be the idempotent in
$\widehat{F}_{\la}$ which projects to the invariants of
$W_{\la}$, and $e_{\boldsymbol{\la}}\in
\widehat{F}(\mathsf{S})$ the matrix with these as diagonal
entries for the different $\la\in \mathsf{S}$.  
Let ${U}^{(1)}(\mathsf{S})=e_{\boldsymbol{\la}} {F}^{(1)}(\mathsf{S}) e_{\boldsymbol{\la}}$.

\begin{Theorem}
  The simple \GT $U$-modules $S$ such that $ {\Wei_{\gamma}}(S)\neq 0$ for
 $\gamma$ in the image of $\mathsf{S}$ are in bijection with simple
 modules over $U^{(1)}(\mathsf{S})$, sending $S\mapsto \oplus_{\la\in
 \mathsf{S}} e_\la {\Wei_{\lambda}}(S)$.
\end{Theorem}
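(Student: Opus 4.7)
The strategy is to bootstrap from the $F$-side equivalence of Theorem~\ref{th:las-bijection} via the standard idempotent calculus for $U=eFe$ and $U^{(1)}(\mathsf{S})=e_{\boldsymbol{\la}}F^{(1)}(\mathsf{S})e_{\boldsymbol{\la}}$. The key inputs already in hand are: (i) the exact quotient functor $M\mapsto eM$ from $F\mmod$ to $U\mmod$ recorded just after Lemma~\ref{lem:FD}; (ii) the explicit weight-space identification $\Wei_\gamma(eM)\cong e_\la \Wei_\la(M)$ from Lemma~\ref{lem:FU}; and (iii) the bijection of Theorem~\ref{th:las-bijection} sending a simple \GT $F$-module $\widetilde{S}$ with some nonzero $\mathsf{S}$-weight space to the simple $\widehat{F}(\mathsf{S})$-module (equivalently $F^{(1)}(\mathsf{S})$-module, since the latter is the quotient by the maximal ideal of the center through which every simple module factors) $T=\bigoplus_{\la\in\mathsf{S}}\Wei_\la(\widetilde{S})$.

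First, I would apply the standard fact that for any algebra $A$ and idempotent $f\in A$, the functor $N\mapsto fN$ induces a bijection between iso classes of simple $A$-modules $N$ with $fN\neq 0$ and iso classes of simple $fAf$-modules. Applied to $(F,e)$, this says that every simple \GT $U$-module has the form $S=e\widetilde{S}$ for a unique simple \GT $F$-module $\widetilde{S}$ with $e\widetilde{S}\neq 0$. Combining with Lemma~\ref{lem:FU} and the assumption that $\mathsf{S}$ meets each $W$-orbit in at most one element, the condition that $\Wei_\gamma(S)\neq 0$ for some $\gamma$ in the image of $\mathsf{S}$ translates exactly into the condition that $e_\la\Wei_\la(\widetilde{S})\neq 0$ for some $\la\in\mathsf{S}$.

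Next, I would apply the same idempotent principle on the other side to the pair $(F^{(1)}(\mathsf{S}),e_{\boldsymbol{\la}})$: simple $U^{(1)}(\mathsf{S})$-modules are in bijection with simple $F^{(1)}(\mathsf{S})$-modules $T$ such that $e_{\boldsymbol{\la}}T\neq 0$, via $T\mapsto e_{\boldsymbol{\la}}T$. Under the equivalence of Theorem~\ref{th:las-bijection}, which sends $\widetilde{S}\mapsto\bigoplus_{\la\in\mathsf{S}}\Wei_\la(\widetilde{S})$, the idempotent $e_{\boldsymbol{\la}}$ acts on the summand $\Wei_\la(\widetilde{S})$ through its diagonal entry $e_\la\in\widehat{F}_\la$. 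Hence $e_{\boldsymbol{\la}}T\neq 0$ on the $F^{(1)}(\mathsf{S})$-side matches $e_\la\Wei_\la(\widetilde{S})\neq 0$ for some $\la\in\mathsf{S}$ on the $F$-side.

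Finally, I would compose the three bijections: simple $U$-modules with weight support meeting the image of $\mathsf{S}$ $\leftrightarrow$ simple $F$-modules $\widetilde S$ with some $e_\la\Wei_\la(\widetilde S)\neq 0$ $\leftrightarrow$ simple $F^{(1)}(\mathsf{S})$-modules $T$ with $e_{\boldsymbol\la}T\neq 0$ $\leftrightarrow$ simple $U^{(1)}(\mathsf{S})$-modules. Tracking what happens along this chain, the composite functor sends $S=e\widetilde S$ to $e_{\boldsymbol{\la}}T=\bigoplus_{\la\in\mathsf{S}}e_\la\Wei_\la(\widetilde S)=\bigoplus_{\la\in\mathsf{S}}\Wei_\gamma(S)$, which is exactly the stated map. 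The only real obstacle I anticipate is bookkeeping: one must verify that the idempotent truncation on the two sides is genuinely compatible with the equivalence, and that the ``at most one element per $W$-orbit'' hypothesis is used correctly so that no $W$-conjugate weight spaces of $\widetilde S$ are double-counted when passing to $U$.
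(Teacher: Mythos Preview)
Your approach is correct and is exactly the argument the paper has in mind: the theorem is stated without proof precisely because it follows from the $F$-side bijection of simples together with the standard idempotent calculus for $U=eFe$ and $U^{(1)}(\mathsf{S})=e_{\boldsymbol{\la}}F^{(1)}(\mathsf{S})e_{\boldsymbol{\la}}$, using Lemma~\ref{lem:FU} to translate between $\Wei_\gamma(e\widetilde{S})$ and $e_\la\Wei_\la(\widetilde{S})$. One small notational point: in your final displayed identification you write $\bigoplus_{\la\in\mathsf{S}}\Wei_\gamma(S)$, where $\gamma$ silently varies with $\la$; it would be cleaner to write $\bigoplus_{\la\in\mathsf{S}}\Wei_{\gamma(\la)}(S)$ or simply keep it as $\bigoplus_{\la\in\mathsf{S}} e_\la\Wei_\la(\widetilde{S})$, but the content is right.
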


\section{The reflection case}
\label{sec:reflection-case}

\notation{$V$}{A vector space such that $\Lambda=\Sym^\bullet(V)$.}

In Section \ref{sec:gener-galo-orders}, we worked in the same
generality as in \cite{Hartwig}.  In this section, we wish to specialize to a much
simpler case. Let $V$ be a $\C$-vector space with an action of
a complex reflection group $W$, and $\mathcal{M}$ a finitely
generated (over $\Z$) subgroup of $V^*$.  We assume from now on that
$\Lambda=\Sym^\bullet(V)$ is the symmetric algebra on this vector
space, with the obvious induced $\mathcal{M}$-action.
Note that the stabilizer $ {\widehat{W}_\la}$ for any $\la\in V^*$
is finite, and in fact a subgroup of $W$ via the usual quotient map
$\What\to W$.  It is generated by the
$\mathcal{M}$-translates of root hyperplanes containing $\la$, and
thus is again a complex reflection group, acting by the translation of
a linear action.

This simplifies matters in one key way: the module
$\Lambda$ is a free Frobenius extension over $ {\Lambda_\la}$ and
over $\Gamma$. Recall that we call a ring extension $A\subset B$ {\bf
  free Frobenius} if $B$ is a free $A$-module, and $\Hom_A(B,A)$ is a
free $B$ module of rank 1 for its induced left $B$-action or right $B$-action; a {\bf Frobenius trace} is a generator of $\Hom_A(B,A)$ as a $B$-module (again, as a left module or a right module).

The fact that $\Lambda$ is free Frobenius over $\Gamma$ is
well-known, and easily derived from results in \cite{broueIntroductionComplex2010}:
following the notation of {\it loc. cit.}, we have a map
$\Lambda\to \Gamma$ defined by $D(J^*)$, which is the
desired trace.  In slightly more down-to-earth terms, we have a unique
element $J\in \Lambda$ of minimal degree that transforms under the determinant
character of the action on $V^*$; this is obtained by taking a suitable
power of the linear form defining each root hyperplane.  There is a unique homogeneous Frobenius trace up to scalar multiplication, which is
characterized by sending this element to $1\in \Gamma$ and killing all
other isotypic components for the action of $W$.  

In particular, this means that $D=\End_{\Gamma}(\Lambda)$,
  the {\bf nilHecke algebra} of $W$, is Morita equivalent to
  $\Gamma$; see for example
  \cite[Lemma 7.1.5]{ginzburg2018nil}.
\begin{Definition}
  We call a flag order $F$ {\bf Morita} if the symmetrization idempotent gives a Morita equivalence between $U=eFe$ and $F$; that
is, if $F=FeF$.  
\end{Definition}

Recall that for a fixed principal Galois order $U$, we have an
associated flag Galois order $\FD$.  Since 
  $D=DeD$ when    $D=\End_{\Gamma}(\Lambda)$  in the
    complex reflection case, we find that the flag order $\FD$ is Morita for any principal Galois order in this case.

Thus, for any principal Galois order, we can study the representation
theory of the corresponding flag order instead.   This approach is implicit in much
recent work on this subject, which uses the nilHecke algebra, such as \cite{FGRZVerma,futornySingularGelfandTsetlin2016,ramirezGelfandTsetlinModules2018}, but many
issues are considerably simplified if we think of the flag order as
the basic object.

It is easy to see how \GT modules behave under this equivalence.  We
can strengthen Lemma \ref{lem:FU} to:
\begin{Lemma}\label{lem:weight-invariants}  If $F$ is Morita, then  $ {\Wei_{\lambda}}(M)$ is free as a $\C
 {W_\la}$-module and 
we have isomorphisms \[ {\Wei_\gamma}(eM)\cong
 {\Wei_{\lambda}}(M)^{ {W_\la}}\qquad  {\Wei_{\lambda}}(M)\cong(
 {\Wei_\gamma}(eM))^{\oplus \# {W_\la}}.\]
\end{Lemma}

The reflection hypothesis also allows us to define a dual version of
the canonical module $C_\la$.  We can consider the quotient $\tilde{C}'_{\la}$
of the module $\Lambda_{\mathscr{S}}$ by all submodules having trivial
intersection with $\Wei_\la(\Lambda_{\mathscr{S}})$.

The algebra
$\Lambda^{(1)}_\la$ is a Frobenius algebra, so its
socle as a $\Lambda^{(1)}_\la$-module is 1-dimensional, and every non-zero
submodule of $\tilde{C}'_{\la}$ has nontrivial intersection with
$\Wei_\la(\Lambda_{\mathscr{S}})$, and thus contains this socle.  This
shows that the intersection of all non-zero submodules is non-trivial,
giving a simple socle $\tilde{C}_\la\subset \tilde{C}'_\la$.
This will sometimes be isomorphic to $C_\la$, and sometimes not.  

\subsection{Special cases of interest}
\begin{Definition}
  We call a weight $\la$ {\bf non-singular} if $ {\widehat{W}_\la}=\{1\}$ and more
  generally {\bf $p$-singular} if $ {\widehat{W}_\la}$ has a minimal
  generating set of $p$ reflections. 
\end{Definition}
In this case, we have an equality $\hat{F}_\lambda=\widehat{\Lambda}_{\la}$, which is a complete local ring, and thus has a single simple module $\widehat{\Lambda}_{\la}/\mathfrak{m}_{\lambda}$.  Theorem \ref{th:bijection} shows that:
\begin{Corollary}
  If $\la$ is non-singular, there is a unique simple \GT module $S$ with $ {\Wei_\la}(S)\cong \C$ and for all other simples $S'$ we have $ {\Wei_\la}(S')= 0$.
\end{Corollary}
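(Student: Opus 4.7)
My plan is to reduce the corollary, via Theorem \ref{th:bijection}, to the computation that $F_\la^{(1)}\cong \C$ when $\widehat W_\la=\{1\}$, and then to carry out that computation directly using the structural lemmas of this section.

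First I unwind the hypotheses: since $\widehat{W}_\la=\{1\}$ we have $\Lambda_\la=\Lambda^{\widehat W_\la}=\Lambda$, and hence $\mathfrak n_\la=\mathfrak m_\la\cap \Lambda_\la=\mathfrak m_\la$; also, Lemma~\ref{lem:Fla-order} gives $F_\la\otimes_{\Lambda_\la}K=K\cdot \widehat W_\la=K$, a one-dimensional $K$-vector space. Next I pass to the completion. The algebra $\widehat F_\la=F_\la\otimes_{\Lambda_\la}\widehat{\Lambda}_\la$ is a finitely generated, torsion-free $\widehat{\Lambda}$-module with $\widehat F_\la\otimes_{\widehat\Lambda}\widehat K=\widehat K$, so it embeds as a subring of $\widehat K=\operatorname{Frac}(\widehat\Lambda)$; moreover $\widehat\Lambda=\Lambda_\la\otimes_\Lambda\widehat\Lambda$ is central in $\widehat F_\la$ and contains $1$, so $\widehat\Lambda\subset \widehat F_\la\subset\widehat K$. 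Because $\widehat\Lambda$ is a complete regular local ring (a formal power series ring on $V$) it is integrally closed in its fraction field, and any finitely generated $\widehat\Lambda$-subring of $\widehat K$ must therefore equal $\widehat\Lambda$. Hence $\widehat F_\la=\widehat\Lambda$, and
\[
F_\la^{(1)}=\widehat F_\la/\widehat F_\la\mathfrak m_\la\;\cong\;\widehat\Lambda/\mathfrak m_\la\widehat\Lambda\;\cong\;\C .
\]

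With this in hand, Theorem~\ref{th:bijection} gives a bijection between simple \GT\ $F$-modules with nonzero $\la$-weight space and simple $F_\la^{(1)}$-modules. Since $F_\la^{(1)}\cong\C$ has exactly one simple module (the one-dimensional $\C$), there is a unique (up to isomorphism) simple \GT\ $F$-module $S$ with $\Wei_\la(S)\ne 0$, and $\Wei_\la(S)\cong \C$; any other simple $S'$ satisfies $\Wei_\la(S')=0$ by definition of ``fiber over $\la$.'' For the $U=eFe$ version of the statement, $W_\la\subset\widehat W_\la=\{1\}$ forces the symmetrizer $e_\la$ to be $1$, so $U_\gamma^{(1)}=e_\la F_\la^{(1)}e_\la=F_\la^{(1)}\cong\C$ and the same argument applies (compare Lemma~\ref{lem:weight-invariants}).

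The only real obstacle is the freeness step $\widehat F_\la\cong \widehat\Lambda$: a priori the Proposition in this subsection treats freeness as a hypothesis, so one must verify it automatically holds here. This is precisely where the reflection-case hypothesis ($\Lambda=\Sym^\bullet V$, so $\widehat\Lambda$ is a regular local ring, in particular integrally closed) does the work, through the ``no proper finite overrings inside the fraction field'' argument above; no assumption on the overall freeness of $F$ over $\Lambda$ is needed.
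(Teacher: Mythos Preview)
Your argument is correct and matches what the paper intends: the corollary is stated without proof as an immediate consequence of Theorem~\ref{th:bijection} once one knows $F_\lambda^{(1)}\cong\C$. The heart of the matter is showing $F_\lambda=\Lambda$ when $\widehat W_\lambda=\{1\}$, and your integral-closedness argument does this cleanly.

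Two minor remarks. First, the detour through the completion is unnecessary: $\Lambda$ itself is already an integrally closed domain (this is a standing hypothesis in Section~\ref{sec:gener-galo-orders}, and trivially so in the reflection case since $\Lambda=\Sym^\bullet V$ is a polynomial ring). Hence $\Lambda\subset F_\lambda\subset L$ together with finite generation of $F_\lambda$ over $\Lambda$ (Lemma~\ref{lem:Fla-order}) already forces $F_\lambda=\Lambda$ before completing; your appeal to regularity of $\widehat\Lambda$ is correct but stronger than needed. Second, a notational slip: since $\Lambda_\lambda=\Lambda$, its fraction field is $L$, not $K=L^W$, so the one-dimensionality coming from Lemma~\ref{lem:Fla-order} should read $F_\lambda\otimes_\Lambda L = L\cdot\widehat W_\lambda = L$; your reasoning is unaffected by this.
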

A natural question to consider is when two non-singular weights $\la,\mu$ have the same simple, and when they do not.  Of course, they can only give the same simple if $\mu=w\cdot \la$ for some $w\in \What$.  
\begin{Corollary} Given non-singular weights $\la$ and $\mu$ as above, we have a simple \GT module $S$ with $ {\Wei_{\lambda}}(S)\cong  {\Wei_{\mu}}(S)\cong \C$ if and only if  ${}_{\la}F_{\mu}\cdot {}_{\mu}F_{\la}\not\subset  {\mathfrak{m}_\la}$.
\end{Corollary}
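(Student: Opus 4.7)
The plan is to apply Theorem \ref{th:las-bijection} to the two-element set $\mathsf{S}=\{\la,\mu\}$, which reduces the problem to classifying simple modules over the finite-dimensional algebra $F^{(1)}(\{\la,\mu\})$. Under the nonsingularity hypothesis $\widehat{W}_\la=\widehat{W}_\mu=\{1\}$, Corollary \ref{cor:dimension-sum} forces the diagonal blocks $F_\la/F_\la\mathfrak{n}_\la$ and $F_\mu/F_\mu\mathfrak{n}_\mu$ to each be one-dimensional; and the description in the preceding corollary of ${}_\mu F_\la$ and ${}_\la F_\mu$ as spanned by elements of the form $w\ell$ and $w^{-1}\ell'$ makes each off-diagonal quotient at most one-dimensional.

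If either off-diagonal quotient vanishes, then both products ${}_\la F_\mu\cdot {}_\mu F_\la$ and ${}_\mu F_\la\cdot {}_\la F_\mu$ trivially lie in $\mathfrak{m}_\la$ and $\mathfrak{m}_\mu$ respectively, and $F^{(1)}(\{\la,\mu\})$ visibly has semisimple quotient $\C\oplus\C$. Otherwise, I would pick generators $x\in e_\la F^{(1)}e_\mu$ and $y\in e_\mu F^{(1)}e_\la$, so that $F^{(1)}(\{\la,\mu\})$ has basis $\{e_\la,e_\mu,x,y\}$ satisfying $xy=c_1e_\la$ and $yx=c_2e_\mu$ for some scalars $c_1,c_2\in\C$. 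The condition ${}_\la F_\mu\cdot {}_\mu F_\la\not\subset\mathfrak{m}_\la$ then translates exactly to $c_1\neq 0$.

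The key observation is that associativity applied to $(yx)y=y(xy)$ gives $c_2y=c_1y$, so $c_1=c_2$. When $c_1=c_2=0$, the off-diagonals generate a square-zero two-sided ideal, the semisimple quotient is $\C\oplus\C$, and there are two distinct one-dimensional simples, each supported at a single weight; no simple carries both weight spaces. When $c_1=c_2\neq 0$, a rescaling of $x$ and $y$ exhibits an isomorphism $F^{(1)}(\{\la,\mu\})\cong M_2(\C)$, whose unique simple module is two-dimensional with one-dimensional weight space at each of $\la$ and $\mu$. Passing back through Theorem \ref{th:las-bijection} yields the claimed equivalence.

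The main obstacle I anticipate is verifying that the off-diagonal bimodule quotients are exactly one-dimensional whenever they are nonzero; this should follow from a nonsingular-case adaptation of Lemma \ref{lem:Fla-span}, showing that the elements of $F$ whose support lies on the single coset containing $w$ span the relevant block of $F^{(1)}(\{\la,\mu\})$ once we mod out by $\mathfrak{n}$.
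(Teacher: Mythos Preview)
Your approach is correct and is essentially the argument the paper leaves implicit (the corollary is stated without proof). Reducing to the two-idempotent algebra $A=F^{(1)}(\{\la,\mu\})$ with one-dimensional corners $e_\la A e_\la\cong\C\cong e_\mu A e_\mu$ and then analyzing the off-diagonal products is exactly the intended mechanism.

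Your one stated ``obstacle''---that the off-diagonal blocks are at most one-dimensional---is in fact unnecessary, and you can drop it from the argument. In the case $e_\la A e_\mu\cdot e_\mu A e_\la\neq 0$, pick $x,y$ with $xy=e_\la$; then as you observe $yx=e_\mu$, and now for \emph{any} $a\in e_\la A e_\mu$ we have $a=(xy)a=x(ya)$ with $ya\in e_\mu A e_\mu=\C e_\mu$, so $a\in\C x$. Thus one-dimensionality of the off-diagonals is a \emph{consequence} of the nonvanishing hypothesis, not an input. In the vanishing case $e_\la A e_\mu\cdot e_\mu A e_\la=0$ (and symmetrically $e_\mu A e_\la\cdot e_\la A e_\mu=0$, by the same associativity trick), the two-sided ideal generated by the off-diagonals is nilpotent regardless of its dimension, so the semisimple quotient is $\C\oplus\C$ and no simple carries both weight spaces. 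Either way the proof goes through without ever bounding $\dim {}_\la F_\mu^{(1)}$ in advance.
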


Now assume $\la$ is 1-singular  and $ {F_\la}$ is a free module over $\Lambda$.  In this case, $ {\widehat{W}_\la}\cong S_2$, so
$F_{\la}^{(1)}$ is 4-dimensional.  Thus, there are 3 possibilities for
the behavior of such a weight:
\begin{Corollary}\label{cor:1-singular}
  Exactly 1 of the following holds:
  \begin{enumerate}
  \item $F_{\la}^{(1)}\cong M_2(\C)$ and there is a unique simple \GT module $S$ with
    $ {\Wei_\la}(S)\cong \C^2$ and for all other simples it is 0.
    \item The Jacobson radical of $F_{\la}^{(1)}$ is 2-dimensional and there are two simple \GT modules $S_1,S_2$ with
      $ {\Wei_\la}(S_i)\cong \C$ and for all other simples it is 0.
    \item The Jacobson radical of $F_{\la}^{(1)}$ is 3-dimensional and
      there is a unique simple \GT module $S$ with $ {\Wei_\la}(S)\cong \C$ and for all other simples it is 0.
  \end{enumerate}
\end{Corollary}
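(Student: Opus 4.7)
The plan is to read this off Artin--Wedderburn applied to the $4$-dimensional algebra $F_\la^{(1)}$, using the two numerical constraints already established on its simple modules.

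First, I collect the available inputs. Since $\la$ is $1$-singular, $\widehat{W}_\la\cong S_2$, so $\#\widehat{W}_\la=2$. Because $F_\la$ is free over $\Lambda$, the remark made just before the corollary gives $\dim_\C F_\la^{(1)}=(\#\widehat{W}_\la)^2=4$, while the proposition immediately preceding the corollary gives $\dim_\C F_\la/F_\la\mathfrak{m}_\la=\#\widehat{W}_\la=2$. By Theorem \ref{th:bijection}, simple \GT $F$-modules $S$ in the fiber over $\la$ correspond bijectively to simple $F_\la^{(1)}$-modules via $S\mapsto \Wei_\la(S)$, and by the proposition every such simple is a quotient of $F_\la/F_\la\mathfrak{m}_\la$ and therefore has $\C$-dimension at most $2$. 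Finally Corollary \ref{cor:dimension-sum} tells us that the sum of dimensions of the distinct simples is at most $\#\widehat{W}_\la=2$.

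Now I run Artin--Wedderburn on the finite-dimensional $\C$-algebra $F_\la^{(1)}$. Writing $J$ for its Jacobson radical, we have $F_\la^{(1)}/J\cong\prod_i M_{n_i}(\C)$, where the $n_i$ are the dimensions of the non-isomorphic simples, so that $\sum_i n_i^2=\dim(F_\la^{(1)}/J)=4-\dim J$. The two bounds above force $n_i\le 2$ for every $i$ and $\sum_i n_i\le 2$; the only sequences $(n_i)$ satisfying both are $(2)$, $(1,1)$, and $(1)$.

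I then match each of these to one of the three listed possibilities. The sequence $(2)$ gives $\dim(F_\la^{(1)}/J)=4$, hence $J=0$ and $F_\la^{(1)}\cong M_2(\C)$ with a unique simple of dimension $2$; this is case (1). The sequence $(1,1)$ gives $\dim(F_\la^{(1)}/J)=2$, hence $\dim J=2$, with two non-isomorphic $1$-dimensional simples; this is case (2). The sequence $(1)$ gives $\dim(F_\la^{(1)}/J)=1$, hence $\dim J=3$, with a unique $1$-dimensional simple; this is case (3). These three cases are mutually exclusive and exhaustive, which proves the corollary. There is no real obstacle: the only substantive inputs are the dimension count $4=(\#\widehat{W}_\la)^2$ and the bound $\sum n_i\le 2$, both already in hand, after which the classification is purely numerical.
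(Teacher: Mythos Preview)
Your argument is correct and is exactly the routine verification the paper leaves to the reader: the text before the corollary records $\dim F_\la^{(1)}=4$, and combined with Corollary~\ref{cor:dimension-sum} this forces the simple dimension vector to be one of $(2)$, $(1,1)$, $(1)$, after which Artin--Wedderburn gives the three cases. One minor redundancy: you invoke both the bound $n_i\le 2$ from the proposition and the bound $\sum_i n_i\le 2$ from Corollary~\ref{cor:dimension-sum}, but the latter already implies the former, so you could streamline by citing only the sum bound.
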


\section{Coulomb branches}
\label{sec:coulomb-branches}

Throughout this section, we fix a field $\K$, and all (co)homology
will be calculated with coefficients in this field.  For now, $\K$ can
have any characteristic not dividing $\# W$, but for most of the
sequel, we will assume that $\K$ is characteristic 0.
\notation{$\K$}{A field, assumed to be of characteristic 0 through most of the paper.}

\subsection{Coulomb branches and principal orders}
\label{sec:coul-branch-princ}

One extremely interesting collection of examples of principal Galois orders are the
Coulomb branches defined by Braverman, Finkelberg, and Nakajima
\cite{BFN}.  These algebras have attracted considerable interest in
recent years, and subsume most examples of interesting principal
Galois orders known to the author.  

\notation{$G$}{A complex reductive group, often called the ``gauge group.''}
\notation{$N$}{A representation of $G$, often called the ``matter representation.''}
There is a Coulomb branch attached to each connected reductive complex
group\footnote{Note that in most previous work on Galois orders such
  as \cite{FOgalois,Hartwig,FGRZGalois}, $G$ has denoted the finite group which we denote $W$; since in all cases of interest to us, $W$
  is the Weyl group of a reductive group, and as discussed below, this
is the context where we find it, we feel this switch in notation is justified.}
$G$ and representation $N$.  Let $G\llbracket t\rrbracket $ be the Taylor series points
of the group $G$, and $G((t))$ its Laurent series points.
Let \[\EuScript{Y}=(G((t))\times N\llbracket t\rrbracket )/G\llbracket t\rrbracket ,\] equipped with its
obvious map $\pi\colon \EuScript{Y}\to N((t))$; we can think of this
as a vector bundle over the affine Grassmannian $G((t))/G\llbracket t\rrbracket $. 
Readers who prefer moduli theoretic interpretations can think of this
as the moduli space of principal bundles on a formal disk with choice
of section of the associated bundle for $N$ and of trivialization away from the origin.
\notation{$\EuScript{Y}$}{The moduli space $\EuScript{Y}=(G((t))\times N[[t]])/G[[t]]$ of principal bundles on a formal disk with choice
of section of the associated bundle for $N$ and of trivialization away from the origin.}

\notation{$Q$}{A group acting on $N$ such that $G\subset Q$ and $Q/G$ is a torus.}
Let
$H=N_{GL(N)}(G)^\circ$ be the connected component of the identity in
the normalizer of $G$, and let $Q$ be a group equipped with an
inclusion $G\hookrightarrow Q$ with $Q/G$ a torus, and a compatible
map $Q\to H$.  The choice we will want to make most often is to assume
that this map induces an isomorphism of $Q/G$ to a maximal torus of
$H/G$, but it can be useful to have the freedom to make a different
choice.  Given a maximal torus $T_Q$ of $Q$, its intersection with $G$
gives a maximal torus $T$ of $G$.  
Note that $\EuScript{Y}$ has a $Q$-action via $q\cdot
(g(t),n(t))=(qg(t)q^{-1},qn(t))$.   It also carries a canonical principal $Q$-bundle $\EuScript{Y}_Q$ given by the quotient of
$G((t))\times Q\times  N\llbracket t\rrbracket $ via the action $g(t)\cdot
(g'(t),q,n(t))=(g'(t)g^{-1}(t),qg^{-1}(0), g(t)n(t)$. We can extend this to an action of $Q\times\C^*$
where the factor of $\C^*$ acts by the loop scaling, and let $\hbar$
denote the equivariant parameter of the loop scaling.

\begin{Definition}\label{def:Coulomb}
  The (quantum) Coulomb branch is the convolution
  algebra \[\EuScript{A}=H_*^{Q\times \C^*}(\pi^{-1}(N\llbracket t\rrbracket )),\]
\end{Definition}
\notation{$\EuScript{A}$}{The quantum Coulomb branch attached to the data $G, N$ and $Q$ (\cref{def:Coulomb})}
It might not be readily apparent what the algebra structure on this
space is.  However, it is uniquely determined by the fact that it acts
on $H_*^{Q\times \C^*}(N\llbracket t\rrbracket )=H^*_{Q\times
  \C^*}(*)$ by
\begin{equation}
a\star b=\pi_*(a\cap \iota(b))\label{eq:convolve}
\end{equation}
where $\iota$ is the inclusion of this algebra into $ \EuScript{A}$ as
the Chern classes of the principal bundle $\EuScript{Y}_Q$ and the
obvious inclusion of $\C[\hbar]\cong H^{\C^*}_*(N\llbracket t\rrbracket )$. Obviously,
there are a lot of technical issues that are being swept under the rug
here; a reader concerned about this point should refer to \cite{BFN} for
more details.

Let $J=Q/G$, and $\mathfrak{j}$ the Lie algebra of this
group.  The subalgebra $H^*_{J\times
  \C^*}(*)=\Sym(\mathfrak{j}^*)[\hbar]\subset \EuScript{A}$ induced by
the $Q\times \C^*$-action is central; borrowing terminology from
physics, we call these {\bf flavor parameters}.  
Thus, we can consider the quotient of $\EuScript{A}$ by a maximal ideal
in this ring.  This quotient is what is called the ``Coulomb branch''
in \cite[Def. 3.13]{BFN} and our Definition \ref{def:Coulomb} matches
the deformation constructed in \cite[\S 3(viii)]{BFN}.   

\notation{$W$}{The common Weyl group of $G$ and $Q$.}
We let $W$ be the Weyl group of $G$ (which is also the Weyl
group of $Q$), let $V=\ft_Q^*\oplus \C\cdot h$
where $\ft_Q$ is the (abstract) Cartan Lie algebra
of $Q$ and let  $\mathcal{M}$ be the cocharacter lattice of $T_{G}$, acting by the
$\hbar$-scaled translations \[\chi\cdot
  (\nu+k\hbar)=\nu+k\langle\chi,\nu\rangle +k\hbar.\]
Note that the action has finite stabilizers on any point where $\hbar\neq
0$ if $\K$ has characteristic 0, but any point with $\hbar=0$ will have infinite stabilizer.   We'll ultimately only be interested in modules
over the specialization $\hbar=1$, so this will not cause an issue for the
moment.  
Note that \[\Lambda\cong H^*_{T_Q\times
  \C^*}(*)=\Sym^\bullet(\mathfrak{t}_Q)[\hbar]\qquad \Gamma\cong H^*_{Q\times
  \C^*}(*) =\Sym^\bullet(\mathfrak{t}_Q)^W[\hbar],\] and $W\ltimes\mathcal{M} $ is the extended affine Weyl
group of $G$.
Localization in equivariant cohomology shows that the action of
\eqref{eq:convolve} induces
an inclusion $\EuScript{A}\hookrightarrow\KGamma$ for the
data above; see \cite[(5.18) \& Prop. 5.19]{BFN}.  Thus, it
immediately follows that:
\begin{Proposition}
  The Coulomb branch is a principal Galois order for these data.  
\end{Proposition}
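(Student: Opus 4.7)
The plan is to verify the three defining conditions of Definition \ref{def:PGO}: that $\Gamma \subset \EuScript{A}$, that $\EuScript{A} \subset \mathcal{K}_\Gamma$, and that $K \cdot \EuScript{A} = \mathcal{K}$. The second containment is precisely the content of the cited \cite[(5.18) \& Prop.~5.19]{BFN}, so the substance of the remaining work lies in the first and third.

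For the first condition, the inclusion $\Gamma \hookrightarrow \EuScript{A}$ is immediate from the map $\iota$ appearing with \eqref{eq:convolve}: it is the embedding of $H^*_{Q \times \C^*}(\mathrm{pt})$ via the identity coset in the affine Grassmannian (equivalently, pullback from $\pi^{-1}(N[[t]])$ to the zero section). Under the Chern--Weil identification $H^*_{Q \times \C^*}(\mathrm{pt}) \cong \Sym^\bullet(\mathfrak{t}_Q)^W[\hbar] = \Gamma$, this realizes $\Gamma$ as a subalgebra of $\EuScript{A}$. To upgrade the cited inclusion $\EuScript{A} \hookrightarrow \mathcal{K}$ to $\EuScript{A} \subset \mathcal{K}_\Gamma$, one notes that the pushforward in equivariant cohomology on the right of \eqref{eq:convolve} lands in integral equivariant cohomology, so the action preserves $\Gamma \subset K$ rather than leaving only fractional elements.

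For the principal condition $K \cdot \EuScript{A} = \mathcal{K}$, I would again invoke equivariant localization. After inverting the nonzero elements of $\Gamma$, the $T$-fixed points of $\pi^{-1}(N[[t]])$ are indexed by the cocharacter lattice $\mathcal{M}$, and each BFN monopole operator, after localization, realizes the corresponding basis element $\chi \in \mathcal{M}$ in the smash product $L \# \mathcal{M}$. Passing to $W$-invariants, the localized convolution algebra already contains representatives of every element in $(L \# \mathcal{M})^W = \mathcal{K}$, so $K \cdot \EuScript{A} = \mathcal{K}$. The main bookkeeping issue, and the only place one could slip up, is ensuring that the faithfulness of the $\mathcal{M} \ltimes W$-action on $\Lambda$ is compatible with $\hbar$ being treated as a formal central parameter; but both facts are built into the setup above, where $\Lambda = \Sym^\bullet(\mathfrak{t}_Q)[\hbar]$ and $\widehat{W}$ is the extended affine Weyl group acting by $\hbar$-scaled translations. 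Once these identifications are tracked, no serious obstacle remains.
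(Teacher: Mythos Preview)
Your proposal is correct and follows essentially the same route as the paper, which treats the proposition as an immediate consequence of the inclusion $\EuScript{A}\hookrightarrow\mathcal{K}_\Gamma$ coming from \cite[(5.18) \& Prop.~5.19]{BFN}; you simply spell out the three conditions of Definition~\ref{def:PGO} that the paper leaves implicit. One small wrinkle: you first say the containment $\EuScript{A}\subset\mathcal{K}_\Gamma$ is ``precisely the content'' of the cited BFN result, and then a paragraph later speak of needing to ``upgrade the cited inclusion $\EuScript{A}\hookrightarrow\mathcal{K}$ to $\EuScript{A}\subset\mathcal{K}_\Gamma$''---decide which of these you mean, since the paper asserts the former.
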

If we fix the flavor parameters, the result will also be a principal
Galois order for an appropriate quotient of $\Lambda$.  

The flag order attached to these data also has an interpretation as the
flag BFN algebra from \cite[Def. 3.2]{websterKoszulDuality2019}.  Let $\EuScript{X}=(G((t))\times
N\llbracket t\rrbracket )/I$, where $I$ is the standard Iwahori, $\pi_{\EuScript{X}} \colon \EuScript{X} \to N((t))$ the obvious map and ${}_{0} \EuScript{X}_0 =\pi_{\EuScript{X}}^{-1}(N\llbracket t\rrbracket ) $.
\notation{$I$}{The standard Iwahori $I\subset G[[t]]$}
\begin{Definition}\label{def:Iwahori-CB}
  The {\bf Iwahori Coulomb branch} is the convolution algebra \[F=H_*^{T_Q\times
    \C^*}({}_{0} \EuScript{X}_0).\] 
\end{Definition}
This is the Morita flag order $\FD$
  associated to $\EuScript{A}$ with
  $D=\End_{\Gamma}(\Lambda)$ the nilHecke algebra of
  $W$, as is shown in \cite[Thm. 3.3]{websterKoszulDuality2019}.

  As mentioned before, we wish to consider the specializations of
  these algebras where $\hbar=1$.  These are again principal/flag Galois
  orders in their own right, but are harder to interpret
  geometrically.  Note that by homogeneity, the specializations of
  this algebra at all different nonzero values of $\hbar$ are isomorphic.  The specialization $\hbar=0$ is quite different in nature,
  since in this case, the action of $\mathcal{M}$ is trivial.

\subsection{Representations of Coulomb branches}
\label{sec:repr-coul-branch}

\notation{$G_{\la}$}{The
Levi subgroup of $G$ which only contains the roots which are integral
at $\la$.}
\notation{$N_\la$}{The span in $N$ of the weight spaces for weights
integral on $\la$.}
\notation{$B_\la$}{The Borel in $G_{\la}$ generated by the roots $\al$ such that $\langle
\la,\al\rangle$ is negative.}

From now on, we assume that $\K$ has characteristic 0.  For a Coulomb branch, the algebra $F^{(1)}_\lambda$ has a geometric interpretation. Since we assume that $\hbar=1$, when we interpret $\la$ as an element of the Lie
algebra $\ft_{Q}\oplus \C$, the second component is $1$.  Let
$ {G_\la}$ (resp. $Q_\la$) be the
Levi subgroup of $G$ (resp. $Q$) which only contains the roots which are integral
at $\la$, and $ {N_\la}$ the span of the weight spaces for weights
integral on $\la$.  Let $ {B_\la}$ be the Borel in $ {G_\la}$ such that
$\operatorname{Lie}( {B_\la})$ is generated by the roots $\al$ such that $\langle
\la,\al\rangle$ is negative and those in the fixed Borel $\mathfrak{b}_G$ such that $\langle
\la,\al\rangle=0$.

\notation{$N^-_\la$}{The subspace of ${N_\la}$ which is
non-positive for the cocharacter corresponding to $\la$.}
\notation{$X_\la$}{The associated vector bundle
$( {G_\la}\times  {N^-_\la})/ {B_\la}$.}
\notation{ ${}_\la\Stein_{\mu}$}{The generalized Steinberg variety $X_\la\times_{N_\la} X_\mu$.}
The element $\la$ integrates to a character
acting on $ {N_\la}$. Let $ {N^-_\la}$ be the subspace of $ {N_\la}$ which is
non-positive for the cocharacter corresponding to $\la$; this subspace
is preserved by the action of $ {B_\la}$.  Consider the associated vector bundle
$X_\la=( {G_\la}\times  {N^-_\la})/ {B_\la}$ and $p_\la$ the associated map
$p\colon X_\la\to  {N_\la}$.
If $ {W_\la}\neq \{1\}$, then there is also a parabolic version of these spaces.  Let $ {P_\la}\subset  {G_\la}$ be the parabolic corresponding to $ {W_\la}$, and let $Y_\la=( {G_\la}\times  {N^-_\la})/ {P_\la}$.

As usual, we have associated Steinberg varieties:
\begin{align*}
\Stein_\la&=X_\la\times_{N_\la} X_\la=\{(g_1 B_\la,g_2B_\la, n) \mid n\in
            g_1N^-_\la\cap g_2N^-_\la\}\\
  {}_\la\Stein_{\mu}&=X_\la\times_{N_\la} X_\mu=\{(g_1 B_\la,g_2B_\mu, n) \mid n\in
            g_1N^-_\la\cap g_2N^-_\mu\}\\
\pStein_\la&=Y_\la\times_{N_\la} Y_\la=\{(g_1 P_\la,g_2P_\la, n) \mid n\in
            g_1N^-_\la\cap g_2N^-_\la\}\\
  {}_\la\pStein_{\mu}&=Y_\la\times_{N_\la} Y_\mu=\{(g_1 P_\la,g_2P_\mu, n) \mid n\in
            g_1N^-_\la\cap g_2N^-_\mu\}
\end{align*}
Recall that the {\bf Borel-Moore homology} of an algebraic variety $X$
over $\C$
is
the hypercohomology of the dualizing sheaf $\mathbb{D}\K_{X}$  indexed
backwards; as usual, this pushforward needs to be computed for the classical topological space $X_{\operatorname{an}}$ rather than in the Zariski topology.  We use the same convention for equivariant Borel-Moore homology:
\begin{equation}
    H^{BM}_i(X)=\mathbb{H}^{-i}(X_{\operatorname{an}}; \mathbb{D}\K_{X})\qquad
  H^{BM,G}_i(X)=\mathbb{H}^{-i}_G(X_{\operatorname{an}}; \mathbb{D}\K_{X}).\label{eq:BM-def}
\end{equation}
Note that this convention makes $H^{BM,G}_*(X)$ into a module over
$H_G^*(X)$ which is homogenous when this ring is given the negative of its usual
homological grading; similarly, the group $H^{BM,G}_i(X)$ must be 0 if
$i>\dim _{\R}X$, but this can be non-zero in  infinitely many negative
degrees.    
  We let $\widehat{H}^{BM,  {G_\la}}_*(X)$ denote the completion of
  $ {G_\la}$-equivariant Borel-Moore homology with respect to its
  grading, with all elements of degree $\leq k$ being a neighborhood
  of the identity for all $k$.

  The Borel-Moore homology $H^{BM}_*(\Stein_\la)$ has a
convolution algebra structure and $H^{BM}_*({}_\la\Stein_\mu)$ a
bimodule structure defined by \cite[(2.7.9)]{CG97}.

\begin{Theorem} \label{thm:stein-iso} Keeping the assumption that $\K$
  has characteristic 0, we have isomorphisms of algebras
  and bimodules
  \begin{align}
 {F_\la^{(1)}}&\cong H^{BM}_*(\Stein_\la)&
                                                 {    {}_{\la}F_\mu^{(1)}}&\cong H^{BM}_*({}_{\la}\Stein_\mu) \label{eq:F-iso}\\
 {\widehat{F}_\la}&\cong \widehat{H}^{BM, Q_\la}_*(\Stein_\la)&
                                                 { {}_{\la}\widehat{F}_\mu}&\cong \widehat{H}^{BM, Q_\la}_*({}_{\la}\Stein_\mu) \label{eq:F-eq-iso}\\
    { U_\la^{(1)}}&\cong H^{BM}_*(\pStein_\la)& {}_{\la}U_\mu^{(1)}&\cong
                                                              H^{BM}_*({}_{\la}\pStein_\mu)\label{eq:U-iso}\\
    { \widehat{U}_\la}&\cong \widehat{H}^{BM, Q_\la}_*(\pStein_\la)& {}_{\la}\widehat{U}_\mu&\cong
                                                              \widehat{H}^{BM, Q_\la}_*({}_{\la}\pStein_\mu)\label{eq:U-eq-iso}
  \end{align}
  If we specialize $F$ by fixing the flavor parameters, then the same result holds with $Q_\la$, replaced by $G_{\la}$.
\end{Theorem}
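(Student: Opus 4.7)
The plan is to deduce all four groups of isomorphisms from the equivariant flag statement \eqref{eq:F-eq-iso}, and to establish \eqref{eq:F-eq-iso} via an equivariant-localization argument on the Iwahori Coulomb branch $F = H_*^{T_Q\times\C^*}({}_0 \EuScript{X}_0)$ of \cite[Def. 3.2]{WebSD}. First I would observe that \eqref{eq:F-iso} follows from \eqref{eq:F-eq-iso} by specializing $\widehat{\Lambda}_\la \to \widehat{\Lambda}_\la/\mathfrak{n}_\la$ on both sides, since $F_\la^{(1)} = F_\la\otimes_{\Lambda_\la}\Lambda_\la/\mathfrak{n}_\la$ and the equivariant parameters of $Q_\la$ are precisely encoded by $\Lambda_\la$ (so that setting them to the value $\la$ recovers the non-equivariant homology of the Steinberg variety by base change, cf.\ \cite[\S 5.2]{CG97}). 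Similarly \eqref{eq:U-iso} and \eqref{eq:U-eq-iso} follow from \eqref{eq:F-iso} and \eqref{eq:F-eq-iso} by applying the symmetrising idempotent $e_\la$ of $\C[W_\la]$: by Lemma~\ref{lem:FD} and the subsequent discussion one has $U_\la^{(1)} = e_\la F_\la^{(1)} e_\la$, while geometrically the $P_\la$-bundle $Y_\la$ is the quotient of the $B_\la$-bundle $X_\la$ by $W_\la$, so that $H^{BM}_*(\pStein_\la) \cong e_\la H^{BM}_*(\Stein_\la) e_\la$ via the standard Springer-type identification of the homology of a partial flag variety with $W_\la$-invariants.

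For \eqref{eq:F-eq-iso}, the geometric heart of the argument is a deformation-retraction statement: the cocharacter $\la\in \ft_Q$ generates a one-parameter subgroup of $Q\times\C^*$ whose fixed locus on ${}_0\EuScript{X}_0$ (after restricting attention to the components that contribute non-trivially to the $\mathfrak{m}_\la$-adic completion) is exactly $\Stein_\la$. Concretely, the $\la$-fixed Schubert cells of the affine flag variety $G((t))/I$ are indexed by elements of $\widehat{W}_\la\subset \widehat{W}$, and the attracting bundle structure reduces $X := (G((t))\times N[[t]])/I$ over those cells to the finite associated bundle $X_\la = (G_\la\times N^-_\la)/B_\la \to N_\la$, because $\la$ acts with strictly positive weights on the complement of $N^-_\la$ in $N_\la$ and on the ``off-Levi'' directions of the loop group. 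Fibering over the base and taking fibered products produces the Steinberg varieties $\Stein_\la$ and ${}_\la\Stein_\mu$.

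Once this retraction is in place, equivariant localization for the $T_Q\times\C^*$-action identifies the $\mathfrak{m}_\la$-adic completion of $H^{T_Q\times\C^*}_*({}_0\EuScript{X}_0)$ with the $Q_\la$-equivariant Borel--Moore homology of the fixed locus; the $W_\la$-invariant part of this completion (or equivalently the contribution from $F_\la = F\cap K\cdot \widehat{W}_\la$ per Lemma~\ref{lem:Fla-order}) is exactly $\widehat{H}^{BM,Q_\la}_*(\Stein_\la)$. Matching the algebra structures is then formal: both the BFN convolution \eqref{eq:convolve} and the Chriss--Ginzburg convolution $(a,b)\mapsto p_{13*}(p_{12}^* a\cap p_{23}^* b)$ on $H^{BM}_*(\Stein_\la)$ are given by the same pull-push recipe on the same fibered-product diagram, and completion commutes with the proper pushforwards and the smooth pullbacks in question. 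The same calculation with $X_\la$ replaced by $X_\mu$ on the second factor gives the bimodule statement.

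The principal obstacle is the retraction step in the second paragraph: one must show that in the $\hbar=1$ specialization the $\mathfrak{m}_\la$-adic completion of equivariant Borel--Moore homology of ${}_0\EuScript{X}_0$ is genuinely supported on the $\la$-fixed locus and not on infinitesimal neighbourhoods thereof, and that the retraction of an infinite-dimensional ind-variety like $X$ onto the finite-dimensional $X_\la$ is compatible with convolution. The cleanest implementation is the argument pointed out by Nakajima: use the $\C^*$-action generated by $\la$ (after choosing $\hbar\neq 0$ so that stabilisers are finite) together with the hyperbolic-restriction / Białynicki-Birula decomposition of ${}_0\EuScript{X}_0$ to produce the retraction equivariantly, verify that the only cells which survive completion at $\mathfrak{m}_\la$ are those indexed by $\widehat{W}_\la$, and appeal to the standard compatibility of hyperbolic restriction with convolution to transport the algebra structure. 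The remainder of the theorem is then a bookkeeping exercise.
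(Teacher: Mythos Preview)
Your localization strategy is essentially the geometric argument the paper sketches (attributed to Nakajima, following Varagnolo--Vasserot): exponentiate $\la$ to a one-parameter subgroup $\mathbb{T}\subset G\times\C^*$, apply the localization theorem to identify the $\mathfrak{n}_\la$-adic completion of $F=H_*^{T_Q\times\C^*}({}_0\EuScript{X}_0)$ with the completed $T_Q$-equivariant homology of the fixed locus, and then read off $\Stein_\la$ from the description of $({}_0\EuScript{X}_0)^{\mathbb{T}}$. Two points deserve correction or emphasis.

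First, the $\mathbb{T}$-fixed locus in $G((t))/I$ is not indexed by $\widehat{W}_\la$; its components are $G_\la$-orbits in bijection with the full orbit $\widehat{W}\cdot\la$. The left $\Lambda$-action (equivariant parameters) completes at $\la$ under localization, but to isolate the single component giving $\Stein_\la$ you must also complete the \emph{right} $\Lambda$-action, which acts via Chern classes of the tautological bundles and picks out the component where those classes have eigenvalue $\la$. The other components are exactly what give you the bimodule pieces ${}_\la\widehat{F}_\mu$ for $\mu\neq\la$.

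Second, your claim that ``matching the algebra structures is then formal'' is precisely the step the paper singles out as \emph{not} formal. For a finite-dimensional manifold the localization isomorphism can be realized by either pullback or pushforward, and these differ by the Euler class of the normal bundle; to get an isomorphism of convolution algebras one must take pushforward in the first factor and pullback in the second (equivalently, insert half an Euler class). In the infinite-dimensional setting of $\EuScript{X}$ neither pure pullback nor pure pushforward makes sense, only this intermediate version does, and the paper does not carry this out in detail---it instead relies on the purely algebraic ring isomorphism of \cite[Thm.~4.2]{WebSD}. Your appeal to ``standard compatibility of hyperbolic restriction with convolution'' is morally on target but would need real work to make rigorous here.

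A minor stylistic difference: you deduce \eqref{eq:U-iso}--\eqref{eq:U-eq-iso} from the flag case by applying $e_\la$ and the Springer identification $H^{BM}_*(\pStein_\la)\cong e_\la H^{BM}_*(\Stein_\la)e_\la$, whereas the paper simply reruns the same localization argument with $G((t))/G[[t]]$ in place of $G((t))/I$. Your route is cleaner and avoids repetition; the paper's is more self-contained.
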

This theorem is a consequence of \cite[Thm. 4.3]{websterKoszulDuality2019}, which is
proven purely algebraically.  H. Nakajima has also communicated a more
direct geometric proof to the author, based on the earlier work of
Varagnolo-Vasserot \cite[\S 2]{varagnoloDoubleAffine2010}. We will include a sketch of that
argument here, but there are some slightly subtle points about
infinite-dimensional topology which we will skip over.  
\begin{proof}[Proof (sketch)]
Note first how the left and right actions of $\Lambda$ on $F$ operate.  The left action is simply induced by the equivariant cohomology of a point, whereas the right action is induced by the Chern classes of tautological bundles on $G((t))/I$.

   \notation{$\mathbb{T}$}{The 1-parameter subgroup of $G\times \C^*$ obtained by exponentiating $\la\in \operatorname{MaxSpec}(\Lambda)$.}
  Consider the 1-parameter subgroup $\mathbb{T}$ of $G\times \C^*$ obtained by exponentiating $\la$.  By the localization theorem in equivariant
  cohomology, the completion $\varinjlim F/\nla_\la^kF$  is
  isomorphic to the completion of the $T_Q$-equivariant Borel-Moore homology of  ${}_{0} \EuScript{X}_0^{\mathbb{T}}$  with respect to the usual grading.
This is easily seen from \cite[(6.2)(1)]{GKM}: the $T_Q$-equivariant
Borel-Moore homology of the complement of the fixed points is a torsion
module whose support avoids $\la$, since the action of $\mathbb{T}$ is
locally free.  Thus, after completion, the long exact sequence in
Borel-Moore homology gives the desired result.  Note that here we also
use the fact that since the action of $\mathbb{T}$ on the fixed points is
trivial, the completion at any point in $\mathbbm{t}$ gives the same
result.  

  First, note that the fixed points $N\llbracket t\rrbracket ^{\mathbb{T}}$ are isomorphic to $ {N^-_\la}$ via the map $\tau_\la\colon  {N_\la}\to N((t))$
  sending an element $n$ of weight $-a$ in $ {N_\la}$ to $t^an$.  

  We can also apply this to the adjoint representation and find  the fixed points of the 1-parameter subgroup on $\mathfrak{g}((t))$; this is a copy of $\mathfrak{g}_\la$, embedded according to the description above.
  Accordingly, the centralizer of this 1-parameter subgroup in $G((t))$ is a copy of $ {G_\la}$ generated by the roots $SL_2$'s of the
  roots $t^{-\langle
\la,\al\rangle}\al$.  The Borel $ {B_\la}$ is the intersection of this
copy of $ {G_\la}$ with the Iwahori $I$.

Now consider the fixed points of $\mathbb{T}$ in
$G((t))/I$.  Each component of this space is a $ {G_\la}$-orbit and these components are in bijection with elements of the orbit $\hat{W}\cdot \la$; that is, $wI$ and $w'I$
are in the same orbit if and only if $w\cdot \la=w'\cdot \la$.  If $w$
is of minimal length with $\mu=w\cdot \la$, the
stabilizer of $wI$ under the action of $ {G_\la}$ is the Borel $ {B_\mu}$.  Considering the vector bundles induced by the tautological bundles shows that elements of $\nla_\mu$ act by elements with trivial degree 0 term, i.e. that the homology of this component is ${}_\la\widehat{F}_\mu$

Thus, the fixed points $\EuScript{X}^{\mathbb{T}}$ break into components
corresponding to these orbits as well, with the fiber over $gwI$ for $g\in  {G_\la}$ and $w$ as
defined above is given by $g\nla^-_\mu$, via the map $g\cdot\tau_\mu$.  The map $\pi_{\EuScript{X}}$ maps
this to $N((t))$ via the map $\tau_\la\circ \tau_{\mu}^{-1}\circ g^{-1}$, so its
intersection with the preimage of $N\llbracket t\rrbracket $ is  $ {N^-_\la}\cap
g\nla^-_\mu$.

The relevant $T_Q$-equivariant homology group is thus
\[H_*^{T_Q}(\{(g {B_\mu}, x)\mid g\in  {G_\mu}, x\in  {N^-_\la}\cap
  g\nla^-_\mu\})\cong H_*^{Q_\la}({}_{\la}\Stein_\mu).\]
Taking quotient by $\nla_\la$, we obtain the
non-equivariant Borel-Moore homology of this variety as desired.  This
shows that we have a vector space isomorphism in \eqref{eq:F-iso}.

The row of isomorphisms \eqref{eq:U-iso} follows from the same argument
applied to $\pi^{-1}(N\llbracket \llbracket t\rrbracket )$ and the affine Grassmannian.

Note that we have not checked that the resulting isomorphism is
compatible with multiplication, and doing so is somewhat subtle. For a
finite-dimensional manifold $X$, we have two isomorphisms between
$H^{\mathbb{T}}_*(X)$ and $H^{\mathbb{T}}_*(X^{\mathbb{T}})$ after completion at any
non-zero point in $\mathbbm{t}$: pullback (defined using
Poincar\'e duality) and pushforward, which differ by the (invertible)
Euler class of the normal bundle by the adjunction formula.  To obtain an isomorphism
$H^{\mathbb{T}}_*(X\times X)$ and $H^{\mathbb{T}}_*(X^{\mathbb{T}}\times X^{\mathbb{T}})$ that
commutes with convolution, one must take the middle road between
these, using pullback times the inverse of the Euler class of the
normal bundle along the first factor, which is the same as the inverse
of pushforward times the Euler class of the normal bundle along the
second factor (effectively, we use the pushforward isomorphism in the
first factor and the pullback in the second factor).  Due to the
infinite dimensionality of the factors $\EuScript{X}$ and
$\EuScript{Y}$, and the nature of the cycles we use, neither the
pushforward nor the pullback isomorphisms make sense, but this
intermediate isomorphism does.  

As we
said above, we will not give a detailed account of this isomorphism,
since we have already constructed a ring isomorphism using the algebraic
arguments of \cite{websterKoszulDuality2019}.    That ``half'' of the Euler
class we need to invert should match with \cite[(4.3a)]{websterKoszulDuality2019}. 
\end{proof}

\begin{Remark}
  This theorem can be modified to work in characteristic $p$, but with a rather different variety than $\Stein_\la$.  Since the stabilizer of $\la$ is the extended affine Weyl group of a Levi subgroup in this case, the algebra $\widehat{F}_\la$ is again a principal flag order for an affine Coxeter group, and is actually either a Coulomb branch itself or a close relative.  We develop this theory in \cite{WebcohI}.
\end{Remark}

The stabilizer $ {\widehat{W}_\la}$ is always isomorphic to a parabolic subgroup of the original Weyl group $W$.  
\begin{Definition}
We call an orbit {\bf integral} if $ {\widehat{W}_\la}\cong W$ and
$N= {N_\la}$.
\end{Definition}
One especially satisfying consequence of Theorem \ref{thm:stein-iso}
is that the category of modules with weights in the non-integral orbit
is equivalent to the same category for an integral orbit but of the
Coulomb branch for the corresponding Levi subgroup $ {G_\la}$ and
subrepresentation $ {N_\la}$.

More precisely, fix an orbit $\mathscr{S}$ of $\What$, and let
$G'= {G_\la}$ and $N'={N_\la}$ for arbitrary $\la\in
\mathscr{S}$. Let $ \mathscr{S}'\subset \mathscr{S}$ be an orbit of the subgroup
$\widehat{W}'\subset \What$ generated by the Weyl group of $G'$
and the subgroup $\mathcal{M}$.   Let
$\GTc'( \mathscr{S}')$ be the category of weight modules with all
weights concentrated in the set $ \mathscr{S}' $ for the Coulomb branch of
$(G',N')$. Note that since all the different orbits $ \mathscr{S}'\subset
\mathscr{S}$ are conjugate under the action of $W$, this
category only depends on $\mathscr{S}$.  Of course, for this smaller group, $ \mathscr{S}'$ is an
integral orbit.  By  Theorem \ref{thm:stein-iso}, we have that:
\begin{Corollary}\label{cor:integral-reduction}
  We have an equivalence of categories $\GTc( \mathscr{S})\cong \GTc'( \mathscr{S}')$.
\end{Corollary}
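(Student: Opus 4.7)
The plan is to identify both categories with module categories over closely-related Steinberg convolution algebras, via Theorems \ref{th:las-bijection} and \ref{thm:stein-iso}. Using the Morita equivalence of Section \ref{sec:reflection-case}, I would first replace the Coulomb branches $U,U'$ by the corresponding Iwahori Coulomb branches $F,F'$, reducing the question to a comparison of weight module categories for the flag orders.

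Decompose $\mathscr{S}$ into $\widehat{W}'$-orbits $\mathscr{S}'_1,\dots,\mathscr{S}'_k$ with $k=|W/W'|$, all $W$-conjugate, and set $\mathscr{S}'=\mathscr{S}'_1$. For a finite $\mathsf{S}\subset\mathscr{S}'$ complete for $\mathscr{S}'$ (in the sense of Definition \ref{def:complete}), I would check that $\widetilde{\mathsf{S}}=W\cdot\mathsf{S}\subset\mathscr{S}$ is complete for $\mathscr{S}$: since $W\subset F$ permutes weight spaces of any $F$-module, vanishing of $\Wei_\la(M)$ on $W\cdot\mathsf{S}$ forces vanishing on each $\mathscr{S}'_i$ by $W$-conjugation, and then on all of $\mathscr{S}$ by completeness inside a single orbit. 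Theorem \ref{th:las-bijection} applied to both $(G,N)$ and $(G',N')$ then gives $\GTc(\mathscr{S})\simeq\widehat{F}(\widetilde{\mathsf{S}})\text{-fdmod}$ and $\GTc'(\mathscr{S}')\simeq\widehat{F'}(\mathsf{S})\text{-fdmod}$, so it suffices to exhibit a Morita equivalence between these algebras.

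Next I would invoke Theorem \ref{thm:stein-iso} on both sides. The $(\la,\mu)$-block of either completed flag algebra is $\widehat{H}^{BM,Q_\la}_*({}_\la\Stein_\mu)$, and the variety ${}_\la\Stein_\mu$ depends only on the data $G_\la,B_\la,P_\la,N^-_\la$ and its counterpart at $\mu$. For $\la,\mu\in\mathscr{S}'$ the Levi $G_\la$ inside $G$ is exactly $G'$ by construction; $\mathcal{M}$ preserves the set of roots integral on $\la$, so the same holds for every $\mu\in\mathscr{S}'$, and the subspace $N_\la$ coincides with $N'$. Viewed as a Cartan element of $G'$, integrality of $\mathscr{S}'$ for $G'$ forces the Levi attached to $\la$ inside $G'$ to be all of $G'$, so $B_\la, P_\la, N^-_\la$ computed in either setting agree. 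Thus the Steinberg varieties computed from either $(G,N)$ or $(G',N')$ are literally the same, with matching $Q_\la$-action, and $\widehat{F}(\mathsf{S})\cong\widehat{F'}(\mathsf{S})$ as locally unital algebras.

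Finally, I would observe that the $W$-action exhibits $\widehat{F}(\widetilde{\mathsf{S}})$ as a $k\times k$ matrix algebra over $\widehat{F}(\mathsf{S})\cong\widehat{F'}(\mathsf{S})$, hence Morita equivalent to it; combined with the equivalences above this gives $\GTc(\mathscr{S})\simeq\GTc'(\mathscr{S}')$. The main obstacle is this last Morita step, requiring a verification that the off-diagonal Steinberg blocks $\widehat{H}^{BM,Q_\la}_*({}_\la\Stein_\mu)$ for $\la\in\mathscr{S}'_i,\mu\in\mathscr{S}'_j$ with $i\neq j$ assemble into a genuine matrix algebra with the expected multiplication — geometrically plausible, since these off-diagonal varieties are $W$-translates of the diagonal ones, but requiring a careful check of convolution. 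Once this is in place, the matching of on-diagonal blocks via the identification $G_\la=G'$ is immediate from the definitions of the Steinberg data.
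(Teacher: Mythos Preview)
Your approach is correct and takes essentially the same route as the paper: both deduce the equivalence from Theorem~\ref{thm:stein-iso} by observing that the Steinberg varieties ${}_\la\Stein_\mu$ depend only on the local data $(G_\la,B_\la,N^-_\la)$, which coincide for $(G,N)$ and $(G',N')$ at any $\la\in\mathscr{S}'$. The paper states the corollary as an immediate consequence and gives no further argument beyond this, so your version is actually more detailed.

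Your self-identified obstacle is not a real difficulty. Since $W\subset F$ by the definition of a principal flag order, each $w\in W$ is an invertible element with $w\in{}_{w\la}F_\la$ and $w^{-1}\in{}_\la F_{w\la}$; hence for every $\mu\in\widetilde{\mathsf{S}}$ there is $\la\in\mathsf{S}$ with ${}_\mu\widehat{F}_\la\cdot{}_\la\widehat{F}_\mu\ni 1$, so $e_{\mathsf{S}}$ is a full idempotent in $\widehat{F}(\widetilde{\mathsf{S}})$ and the Morita equivalence follows immediately. No careful convolution check is needed. One small point: the paper's post-corollary remark offers a complementary geometric picture---realizing the Coulomb branch for $(G',N')$ as the $\mathbb{T}$-fixed-point homology and noting that the normal-bundle Euler class acts invertibly on modules in $\GTc(\mathscr{S})$---which explains why the equivalence ``does not change the underlying vector space'' and only twists the $F$-action by units. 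This is a slightly different packaging of the same localization idea underlying Theorem~\ref{thm:stein-iso}, but not a different proof.
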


This equivalence does not change the underlying vector space and its weight space decomposition; it simply multiplies the action of elements of $F$ by elements of the appropriate completion of $\Gamma$ to adjust the relations.  This can be proven in the spirit of Theorem \ref{thm:stein-iso} by presenting the Coulomb branch of $({G_\la},{N_\la})$ as the homology of the fixed points of the torus action, and noting that the Euler class of the normal bundle acts invertibly on all the modules in the relevant subcategory.

\subsection{Gradings}
In this section, we'll assume for simplicity that we are in the integral case.
This is a particularly nice description since the convolution algebras
in question are graded, and a simple geometric argument shows that
they are graded free over the subalgebra $\Lambda^{(1)}_\la$, with the degrees of the generators read off from
the dimensions of the preimages of the orbits in $\Stein_\la$.  For
reasons of Poincar\'e duality, we grade $H^{BM}_*(\Stein_\la)$ so that
a cycle of dimension $d$ has degree $\dim X_\la-d$, and
$H^{BM}_*({}_{\la}\Stein_\mu)$  so that
a cycle of dimension $d$ has degree $\frac{\dim X_\la+\dim
  X_\mu}{2}-d$. This is homogeneous by \cite[(2.7.9)]{CG97}.
  
  Note that since we have reversed the homological grading again, we've effectively gotten rid of the minus sign in \eqref{eq:BM-def}, and now cohomology will act homogeneously with its usual grading rather than its negative.  In particular, $H^{BM,Q_\la}_*({}_{\la}\Stein_\mu)$ will be a homogeneous module over $H_{Q_\la}^*(pt)$ in the usual grading. 
  
\begin{Proposition}\label{prop:generator-degree}
  ${F_\la^{(1)}}$ has a set of free generators with degrees given by $\dim ({N^-_\la})-\dim (w{N^-_\la}\cap {N^-_\la})-\ell(w)$ ranging over $w\in
  {\widehat{W}_\la}$, identified with the Weyl group of ${G_\la}$.  
\end{Proposition}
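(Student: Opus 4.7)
The plan is to prove this geometrically, using Theorem \ref{thm:stein-iso} to identify $F_\la^{(1)} \cong H^{BM}_*(\Stein_\la)$ with the left $\Lambda^{(1)}_\la$-action corresponding to pullback of $H^*(G_\la/B_\la)$ along the first projection $\Stein_\la \to G_\la/B_\la$. The key is to stratify $\Stein_\la$ by the $G_\la$-diagonal orbits $O_w$ on $G_\la/B_\la \times G_\la/B_\la$, indexed by $w \in \widehat{W}_\la$ (identified with the Weyl group of $G_\la$), and then read off the fundamental classes of the closures as a basis.

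First I would observe that the preimage $\Stein_\la^w$ of $O_w$ is the $G_\la$-equivariant vector bundle
\[
\Stein_\la^w \;=\; G_\la \times^{B_\la \cap wB_\la w^{-1}} (N^-_\la \cap wN^-_\la)
\]
over $O_w$, of rank $\dim(N^-_\la \cap wN^-_\la)$; here one checks that $B_\la \cap wB_\la w^{-1}$ preserves the fiber, using that $B_\la$ preserves $N^-_\la$ by construction and conjugating by $w$ handles the intersection with $wN^-_\la$. Since the first projection $O_w \to G_\la/B_\la$ is a Zariski-locally trivial affine bundle with fibers $B_\la w B_\la/B_\la \cong \mathbb{A}^{\ell(w)}$, the composite $\Stein_\la^w \to G_\la/B_\la$ is an affine bundle of total relative complex dimension $\ell(w) + \dim(N^-_\la \cap wN^-_\la)$. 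The Thom isomorphism then gives
\[
H^{BM}_*(\Stein_\la^w) \;\cong\; H^{BM}_{*-2(\ell(w) + \dim(N^-_\la \cap wN^-_\la))}(G_\la/B_\la),
\]
which is concentrated in even degrees.

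Next I would order the strata by a linear extension of the Bruhat order, giving a filtration of $\Stein_\la$ by closed unions of strata. Since each graded piece has Borel--Moore homology in even degree only, the long exact sequences split and $H^{BM}_*(\Stein_\la) \cong \bigoplus_{w} H^{BM}_*(\Stein_\la^w)$ as a module over $H^*(G_\la/B_\la) = \Lambda^{(1)}_\la$. The fundamental classes $[\overline{\Stein_\la^w}]$ lift the generators of the summands and so constitute a free $\Lambda^{(1)}_\la$-basis of $F_\la^{(1)}$. Under the grading convention that a cycle of complex dimension $d$ in $\Stein_\la$ sits in degree $\dim X_\la - d$, and using $\dim X_\la = \dim G_\la/B_\la + \dim N^-_\la$ together with $\dim \Stein_\la^w = \dim G_\la/B_\la + \ell(w) + \dim(N^-_\la \cap wN^-_\la)$, the class $[\overline{\Stein_\la^w}]$ lands in degree $\dim N^-_\la - \ell(w) - \dim(N^-_\la \cap wN^-_\la)$, exactly as claimed.

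The heart of the argument is the parity vanishing that lets the filtration split; everything else is bookkeeping of dimensions. The only mild subtlety is that $\Stein_\la$ is not compact and $G_\la/B_\la$ infinite-dimensional considerations enter through Theorem \ref{thm:stein-iso}, but by that point we are working with the finite-dimensional reduction $F_\la^{(1)}$ over a single $G_\la$, so all Borel--Moore and Thom isomorphism statements apply in their standard form.
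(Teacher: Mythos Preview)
Your proof is correct and follows essentially the same route as the paper: stratify $\Stein_\la$ by the Bruhat cells $O_w$ in $(G_\la/B_\la)^2$, identify each preimage $\Stein_\la^w$ as a vector bundle over an affine bundle over $G_\la/B_\la$, and use the resulting fundamental classes as free generators over $\Lambda^{(1)}_\la \cong H^*(G_\la/B_\la)$, then compute their degrees by dimension count. The paper's version is terser---it simply asserts that the homology of each stratum is free of rank 1 and reads off the degree---whereas you spell out the Thom isomorphism and the even-degree/parity argument that makes the filtration split; this is a welcome elaboration but not a different idea.
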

\begin{proof}
The product $({G_\la}/{B_\la})^2$ breaks up into finitely many ${G_\la}$-orbits,
each one of which contains the pair of cosets $(B_
{\la},w{B_\la})$ for a unique $w\in {\widehat{W}_\la} $.  This orbit is isomorphic to an affine bundle over
${G_\la}/{B_\la}$ with fiber ${B_\la}/({B_\la}\cap w{B_\la} w^{-1})$, which is 
an affine space of  dimension $\ell(w)$.  Furthermore, the preimage of
this orbit in $\Stein_\la$ is a vector bundle of dimension $\dim
(w{N^-_\la}\cap {N^-_\la})$. This means that,
under the usual grading on the convolution algebra, the fundamental class has degree
equal to
$\dim X_\la$ minus the dimension of this orbit.  These fundamental
classes give free generators over $\Lambda^{(1)}_\la$, since the homology of
each of these vector bundles is free of rank 1.
\end{proof}
In particular, if these degrees are always non-negative, then all
elements of positive degree are in the Jacobson radical.
\begin{Corollary}\label{cor:sum of squares}
  If $\dim ({N^-_\la})-\dim (w{N^-_\la}\cap {N^-_\la})-\ell(w)\geq 0$ for all $w\in
  {\widehat{W}_\la}$, then the sum of $(\dim {\Wei_\la}(S))^2$ over all simple \GT
  modules is
  \[\leq \# \{w\in
  {\widehat{W}_\la}\mid \dim ({N^-_\la})-\dim (w{N^-_\la}\cap {N^-_\la})=\ell(w)\}.\]
\end{Corollary}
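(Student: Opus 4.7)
The plan is to combine Proposition \ref{prop:generator-degree} with the Wedderburn bound, using the grading of $F_\la^{(1)}$ to control its semisimple quotient. By Theorem \ref{th:bijection}, simple GT modules $S$ with $\Wei_\la(S)\neq 0$ are in bijection with simple modules over the finite-dimensional algebra $F_\la^{(1)}$, and the corresponding generalized weight space has dimension equal to the dimension of the matching simple $F_\la^{(1)}$-module. Thus Wedderburn-Artin gives
\[\sum_S (\dim \Wei_\la(S))^2 \leq \dim_{\K} \bigl(F_\la^{(1)}/\mathrm{Jac}(F_\la^{(1)})\bigr),\]
so it suffices to bound the right-hand side by the cardinality claimed.

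Next, I would invoke the grading. By Proposition \ref{prop:generator-degree}, $F_\la^{(1)}$ is a free graded module over $\Lambda^{(1)}_\la$ with generators in degrees $d_w := \dim(N^-_\la) - \dim(wN^-_\la\cap N^-_\la) - \ell(w)$ indexed by $w\in \widehat{W}_\la$. Under the hypothesis, every $d_w\geq 0$, so $F_\la^{(1)}$ is concentrated in non-negative degrees. Moreover, $\Lambda^{(1)}_\la = \Lambda/\Lambda\mathfrak{n}_\la$ is the coinvariant ring of the complex reflection group $\widehat{W}_\la$ acting on $\mathfrak{t}_Q$; with the standard positive grading on $\Lambda$, its augmentation ideal sits in strictly positive degree and its degree-zero part is just $\K$.

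Because $F_\la^{(1)}$ is finite-dimensional and lives in degrees $\geq 0$, the positive-degree component $F_\la^{(1)}[{>}0]$ is a two-sided ideal that is eventually zero under self-multiplication (once the total degree exceeds the maximum degree appearing), hence nilpotent. Therefore $F_\la^{(1)}[{>}0]\subset \mathrm{Jac}(F_\la^{(1)})$, and the quotient is a quotient of the degree-zero piece $F_\la^{(1)}[0]$. Since $\Lambda^{(1)}_\la[0]=\K$, the degree-zero piece is spanned over $\K$ by the free generators of degree $0$, that is, by the set
\[\{w\in \widehat{W}_\la \mid \dim(N^-_\la) - \dim(wN^-_\la\cap N^-_\la) = \ell(w)\}.\]
Combining these bounds yields the corollary.

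The only step that requires care is the structural input in the second paragraph: one must check that the grading on $F_\la^{(1)}$ is genuinely non-negative (not merely bounded below) under the hypothesis, and that $\Lambda^{(1)}_\la$ contributes only in degree zero at the bottom, so that the degree-zero piece of $F_\la^{(1)}$ has dimension exactly equal to the number of degree-zero free generators. Once this is pinned down, the nilpotency argument and Wedderburn bound are formal, so I expect no serious obstacle beyond bookkeeping of the grading conventions fixed in the preamble to Proposition \ref{prop:generator-degree}.
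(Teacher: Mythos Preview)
Your argument is correct and is exactly the expansion of the paper's one-line justification preceding the corollary (``if these degrees are always non-negative, then all elements of positive degree are in the Jacobson radical''). The only fine point worth noting is that the Wedderburn step $\sum_S(\dim\Wei_\la(S))^2=\dim\bigl(F_\la^{(1)}/\mathrm{Jac}\bigr)$ uses that $\K$ is algebraically closed (otherwise division algebras could appear), which the paper is tacitly assuming here; everything else you wrote matches the intended proof.
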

Note that the fact that  the
algebra $F^{(1)}(\mathsf{S})$ is graded allows us to define a
graded lift $\widetilde{\GTc}$ of the category of Gelfand-Tsetlin
modules by considering graded modules over $F^{(1)}(\la_1,\dots,
\la_k)$.

Following Ginzburg and Chriss \cite[8.6.7]{CG97}, we can restate Theorem
\ref{thm:stein-iso} as
\[{F_\la^{(1)}} \cong \operatorname{Ext}^\bullet\left(
    (p_{\la})_* \K_{X_\la}, (p_{\la})_*
    \K_{X_\la}\right) \]
\begin{equation}
 F^{(1)}(\mathsf{S})\cong
 \operatorname{Ext}^\bullet\left(\bigoplus_{i=1}^k (p_{\la_i})_*
   \K_{X_{\la_i}}, \bigoplus_{i=1}^k (p_{\la_i})_*
   \K_{X_{\la_i}}\right)\label{eq:Ginzburg-Chriss}
\end{equation}

The geometric description of \eqref{eq:Ginzburg-Chriss} has an
important combinatorial consequence when combined with the
Decomposition Theorem of Beilinson-Bernstein-Deligne-Gabber
\cite[Thm. 8.4.8]{CG97}:

\begin{Theorem}
  The simple \GT modules $S$ such that ${\Wei_{\la_i}}(S)\neq 0$ for
  some $i$ are in bijection with simple perverse sheaves $\operatorname{IC}(Y,\chi)$ appearing as
  summands up to shift of $\oplus_i (p_{\la_i})_* \K_{X_{\la_i}}$, with
  the dimension of $  {\Wei_{\la_i}}(S)$ being the multiplicity of
  all shifts of $\operatorname{IC}(Y,\chi)$ in $(p_{\la_i})_* \K_{X_{\la_i}}$.
\end{Theorem}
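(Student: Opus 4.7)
The plan is to combine the Ext-algebra realization in \eqref{eq:Ginzburg-Chriss} with the Decomposition Theorem and a standard graded-Morita argument. By Theorem \ref{th:las-bijection} (applied to a finite complete subset), simple \GT modules $S$ with $\Wei_{\la_i}(S)\neq 0$ for some $i$ are in bijection with simple modules over
\[ F^{(1)}(\mathsf{S}) \;\cong\; \Ext^\bullet(P,P), \qquad P \;:=\; \bigoplus_{i=1}^k (p_{\la_i})_* \K_{X_{\la_i}}, \]
so the problem reduces to classifying simples over this Ext algebra and identifying their weight spaces.

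First I would apply the Decomposition Theorem to each $(p_{\la_i})_* \K_{X_{\la_i}}$. The map $p_{\la_i}$ is proper (its fibers are closed subvarieties of $G_{\la_i}/B_{\la_i}$) and $X_{\la_i}$ is smooth, so each pushforward is semisimple. Writing the decomposition into isotypic components in the derived category,
\[ P \;\cong\; \bigoplus_{(Y,\chi)} V_{(Y,\chi)}\otimes_{\K} \operatorname{IC}(Y,\chi), \]
with $V_{(Y,\chi)}$ a finite-dimensional graded multiplicity space (the grading tracking perverse shifts), and $V_{(Y,\chi)} = \bigoplus_i V_{(Y,\chi),i}$ where $V_{(Y,\chi),i}$ records the multiplicity (over all shifts) of $\operatorname{IC}(Y,\chi)$ in $(p_{\la_i})_* \K_{X_{\la_i}}$.

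Next I would compute the Ext-algebra using this decomposition:
\[ \Ext^\bullet(P,P) \;\cong\; \bigoplus_{(Y,\chi),(Y',\chi')} \Hom(V_{(Y,\chi)}, V_{(Y',\chi')}) \otimes \Ext^\bullet\!\bigl(\operatorname{IC}(Y,\chi), \operatorname{IC}(Y',\chi')\bigr). \]
Since each $\operatorname{IC}(Y,\chi)$ is simple in the perverse heart we have $\Ext^{<0} = 0$ and $\Ext^0$ is $\K$ on the diagonal and $0$ off, so the degree-zero part is $\prod_{(Y,\chi)} \End(V_{(Y,\chi)})$. The algebra $F^{(1)}(\mathsf{S})$ is finite-dimensional, so any positive-degree homogeneous element is nilpotent and hence lies in the Jacobson radical. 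The simples of $F^{(1)}(\mathsf{S})$ are therefore in bijection with the set of $(Y,\chi)$ appearing in $P$, with the simple $S_{(Y,\chi)}$ having underlying vector space $V_{(Y,\chi)}$.

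Finally, to identify $\Wei_{\la_i}(S_{(Y,\chi)})$ I would track the block idempotent $e_{\la_i} \in F^{(1)}(\mathsf{S})$ through the isomorphism \eqref{eq:Ginzburg-Chriss}. It corresponds to the endomorphism of $P$ projecting onto the summand $(p_{\la_i})_* \K_{X_{\la_i}}$, which acts on each $V_{(Y,\chi)}$ as the projection onto the subspace $V_{(Y,\chi),i}$; therefore $\Wei_{\la_i}(S_{(Y,\chi)}) = e_{\la_i} S_{(Y,\chi)} \cong V_{(Y,\chi),i}$, whose dimension is precisely the total multiplicity of all shifts of $\operatorname{IC}(Y,\chi)$ in $(p_{\la_i})_* \K_{X_{\la_i}}$. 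The main obstacle here is the bookkeeping: one must verify that the geometric matrix-block decomposition of $F^{(1)}(\mathsf{S})$ coming from the $\la_i$ really matches the algebraic block decomposition coming from the individual summands of $P$ on the Ext side. This compatibility is built into the Chriss--Ginzburg convolution framework of Theorem \ref{thm:stein-iso}, since each summand $(p_{\la_i})_* \K_{X_{\la_i}}$ arises as the pushforward along the $i$-th Steinberg factor, but a careful statement requires unwinding the natural transformations between \eqref{eq:Ginzburg-Chriss} and the matrix realization \eqref{eq:Flas}.
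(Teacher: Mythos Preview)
Your argument is correct and essentially the same as the paper's: both apply the Decomposition Theorem to $P=\bigoplus_i (p_{\la_i})_*\K_{X_{\la_i}}$ and use the vanishing of $\operatorname{Ext}^{<0}$ between IC sheaves to identify the semisimple quotient of $F^{(1)}(\mathsf{S})$ with $\prod_{(Y,\chi)}\End(V_{(Y,\chi)})$, hence the simples with the multiplicity spaces. The paper packages this as a Morita equivalence with the basic algebra $A=\operatorname{Ext}^\bullet\big(\bigoplus\operatorname{IC}(Y,\chi)\big)$ via the bimodule $B=\operatorname{Ext}^\bullet\big(P,\bigoplus\operatorname{IC}(Y,\chi)\big)$, invoking \cite[Cor.~8.4.4]{CG97} directly, whereas you decompose $\operatorname{Ext}^\bullet(P,P)$ by hand; these are equivalent bookkeeping choices.

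One small point worth tightening: the grading in which ``degree zero is $\prod\End(V_{(Y,\chi)})$'' is the one pulled back from $A$ along your tensor decomposition, \emph{not} the native $\operatorname{Ext}$-grading on $\operatorname{Ext}^\bullet(P,P)$---the perverse shifts recorded in the graded spaces $V_{(Y,\chi)}$ mix the two. Your nilpotence-of-positive-degree argument is valid only for the former grading, so say so explicitly (or phrase it as a filtration). Also, the bijection you invoke between simple \GT modules and simples of $F^{(1)}(\mathsf{S})$ is the unnamed theorem just before Theorem~\ref{th:las-bijection}, and needs no completeness assumption on $\mathsf{S}$.
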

Note that this result is implicit in \cite[\S 8.7]{CG97} and
\cite[pg.\ 9]{sauterSurveySpringer2013} but unfortunately is not stated clearly in
either source.
\begin{proof}
  By the Decomposition Theorem,   $(p_{\la})_* \K_{X_\la}$ is a direct
  sum of shifts of simple perverse sheaves.  In the notation of \cite[Thm. 8.4.8]{CG97}, we have \[(p_{\la})_*\K_{X_\la}\cong
    \bigoplus_{(i,Y,\chi)} L_{Y,\chi}(i,\la)\otimes
    \operatorname{IC}(Y,\chi)[i].\]
  Let $L_{Y,\chi}\cong \oplus_{i,\la_j}L_{Y,\chi}(i,\la_j)$ be the
  $\Z$-graded vector space obtained by summing the multiplicity spaces.
 Let\[A=\operatorname{Ext}^\bullet\big(\bigoplus_{ L_{Y,\chi}\neq 0}
   \operatorname{IC}(Y,\chi)\Big) \qquad B=\operatorname{Ext}^\bullet\big(\bigoplus_j (p_{\la_j})_*
   \K_{X_{\la_j}}, \bigoplus_{ L_{Y,\chi}\neq 0}
   \operatorname{IC}(Y,\chi)\big).\] 
 By \cite[Cor. 8.4.4]{CG97}, this
algebra   $A$ is a positively graded basic algebra with irreps indexed by
pairs $(Y,\chi)$ such that $L_{Y,\chi}\neq 0$, and $B$ is an $A\operatorname{-}F^{(1)}(\mathsf{S})$.  The bimodule $B$ induces a graded Morita equivalence
between $A$ and $F^{(1)}(\mathsf{S})$.  Thus, the simple
representations of $F^{(1)}(\mathsf{S})$ are
the images of these 1-dimensional irreps under the Morita
equivalence, that is, the multiplicity spaces $L_{Y,\chi}$, with the
dimension of the different weight spaces given by $\dim
L_{Y,\chi}(*,\la)$, the multiplicity of all shifts of $
\operatorname{IC}(Y,\chi)$ in  $(p_{\la})_* \K_{X_\la}$.  
\end{proof}

The additive category of perverse sheaves given by sums of shifts of
summands of $ (p_{\la_i})_* \K_{X_\la}$ satisfies the hypotheses of
\cite[Lem. 1.18]{WebCB}, and so by \cite[Lem. 1.13 \&  Cor. 2.4]{WebCB}, we have
that (as proven in \cite[Def. 4.7]{websterKoszulDuality2019}):
\begin{Theorem}\label{th:dual-canonical1}
  The classes of the simple \GT modules form a dual canonical basis (in the sense of \cite[\S 2]{WebCB}) in the Grothendieck group of $\widetilde{\GTc}$.
\end{Theorem}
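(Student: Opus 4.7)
The plan is to apply the abstract categorification machinery from \cite[\S 2]{WebCB} directly, which takes as input a suitable additive category of semisimple perverse sheaves and outputs a dual canonical basis structure on the Grothendieck group of modules over its Ext algebra. All the hard work has already been done in Theorem \ref{thm:stein-iso} and the presentation \eqref{eq:Ginzburg-Chriss}: the category $\widetilde{\GTc}(\mathsf{S})$ is (equivalent to) the category of finite-dimensional graded modules over the positively graded algebra $F^{(1)}(\mathsf{S}) \cong \operatorname{Ext}^\bullet(\mathscr{P},\mathscr{P})$, where $\mathscr{P} = \bigoplus_{i} (p_{\la_i})_* \K_{X_{\la_i}}$.

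The first step is to verify the hypotheses of \cite[Lem. 1.18]{WebCB} for the additive category $\mathcal{P}$ generated by summands (and shifts) of $\mathscr{P}$. The essential points are: (a) each map $p_{\la_i}\colon X_{\la_i} \to N_{\la_i}$ is proper, since $X_{\la_i}$ is a vector bundle over the projective variety $G_{\la_i}/B_{\la_i}$; (b) each $X_{\la_i}$ is smooth, so $\K_{X_{\la_i}}[\dim X_{\la_i}]$ is a pure perverse sheaf and $(p_{\la_i})_* \K_{X_{\la_i}}$ is pure and semisimple by the Decomposition Theorem; (c) these complexes are self-dual under Verdier duality up to shift, because $X_{\la_i}$ is smooth and $p_{\la_i}$ is proper; (d) the grading on $\operatorname{Ext}^\bullet(\mathscr{P},\mathscr{P})$ is concentrated in non-negative degrees by purity, with degree $0$ part semisimple, as in \cite[Cor. 8.4.4]{CG97}.

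Once these hypotheses are in place, \cite[Lem. 1.13 and Cor. 2.4]{WebCB} give the conclusion: the distinguished signed basis of $K_0(\widetilde{\GTc}(\mathsf{S}))$ coming from the classes of simple (graded) modules, together with the bar involution induced by Verdier duality on $\mathcal{P}$, satisfies the axioms of a dual canonical basis in the sense of \cite[\S 2]{WebCB}. The bijection between simple modules and simple perverse sheaf summands of $\mathscr{P}$ from the theorem just above is what translates the Decomposition Theorem structure into a statement about the module category.

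The only genuine content to check is that the isomorphism of Theorem \ref{thm:stein-iso} is an isomorphism of \emph{graded} algebras (so that the grading conventions on the $K_0$ of $\widetilde{\GTc}$ really do match those used in \cite{WebCB}); this follows from the homogeneity of the convolution product with respect to the Poincar\'e-dual grading discussed before Proposition \ref{prop:generator-degree}. The main obstacle, if any, is purely bookkeeping: matching the bar involution on $K_0(\widetilde{\GTc})$ coming from the grading shift with the one induced by Verdier duality on $\mathcal{P}$, which was verified in the parallel setting of \cite[Cor. 2.20]{WebSD} and transfers here without change.
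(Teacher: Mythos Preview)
Your proposal is correct and follows exactly the same route as the paper: verify that the additive category of shifts of summands of $\bigoplus_i (p_{\la_i})_*\K_{X_{\la_i}}$ satisfies the hypotheses of \cite[Lem.~1.18]{WebCB}, then invoke \cite[Lem.~1.13 \& Cor.~2.4]{WebCB} (the paper also points to \cite[Cor.~2.20]{WebSD}). One small quibble: properness of $p_{\la_i}$ is not a formal consequence of $X_{\la_i}$ being a vector bundle over a projective variety; rather it follows because $X_{\la_i}$ embeds closedly in $G_{\la_i}/B_{\la_i}\times N_{\la_i}$ and the second projection is proper.
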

We only truly need the Decomposition
theorem to prove a single purely algebraic, but extremely non-trivial fact:
\begin{Corollary}
  The graded algebra $F^{(1)}(\mathsf{S})$ is graded Morita equivalent to an algebra which is non-negatively graded and semi-simple in degree 0.
\end{Corollary}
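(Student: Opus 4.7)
The plan is to read this off directly from the proof of the preceding theorem, together with the purity content of the Decomposition Theorem. By the Ext-algebra presentation \eqref{eq:Ginzburg-Chriss}, we have an isomorphism $\gls{FlaNs}^{(1)}(\mathsf{S})\cong \operatorname{Ext}^\bullet\big(\bigoplus_i (p_{\la_i})_*\K_{X_{\la_i}}\big)$, respecting the $\Z$-grading. Applying the Decomposition Theorem, each $(p_{\la_i})_*\K_{X_{\la_i}}$ decomposes as a direct sum of shifts of simple perverse sheaves $\operatorname{IC}(Y,\chi)$, and combining these collects a (graded) multiplicity space $L_{Y,\chi}$ for each such isomorphism class that appears.

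Next, form $A=\operatorname{Ext}^\bullet\big(\bigoplus_{L_{Y,\chi}\neq 0}\operatorname{IC}(Y,\chi)\big)$, taking one copy of each IC sheaf that occurs. Then the bimodule $B$ from the preceding proof gives a graded Morita equivalence between $\gls{FlaNs}^{(1)}(\mathsf{S})$ and $A$: on the one side we recover the original Ext-algebra by amplifying each summand by $L_{Y,\chi}$, while on the other side we have precisely $A$. So the statement reduces to verifying that $A$ is non-negatively graded and semisimple in degree $0$.

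For non-negativity, we invoke \cite[Cor.~8.4.4]{CG97}: because the $\operatorname{IC}(Y,\chi)$ are pure of weight $0$ (after a normalization shift absorbed into the grading convention of Borel–Moore homology adopted above), $\operatorname{Ext}^k(\operatorname{IC}(Y,\chi),\operatorname{IC}(Y',\chi'))=0$ for $k<0$. For semisimplicity in degree $0$, note that a degree $0$ extension is just a morphism in the category of perverse sheaves, and since the $\operatorname{IC}(Y,\chi)$ are simple objects, $\operatorname{Hom}$ between non-isomorphic ones vanishes while $\operatorname{End}(\operatorname{IC}(Y,\chi))\cong \K$ by Schur's lemma (as $\K$ has characteristic $0$ and we have assumed the required splittings). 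Thus $A_0$ is a direct sum of copies of $\K$, which is semisimple.

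The only genuinely non-trivial input is the purity statement for the summands of $(p_{\la_i})_*\K_{X_{\la_i}}$, which is exactly what the Decomposition Theorem supplies; everything else is a formal manipulation of Ext-algebras of semisimple objects in a $\K$-linear abelian category. This is precisely the ``single purely algebraic, but extremely non-trivial fact'' flagged immediately before the corollary.
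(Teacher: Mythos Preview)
Your proof is correct and follows essentially the same approach as the paper: the paper does not give an explicit proof of this corollary, treating it instead as an immediate consequence of the preceding theorem's proof, where it was already noted that the algebra $A$ is Morita equivalent to $F^{(1)}(\mathsf{S})$ via the bimodule $B$ and is, by \cite[Cor.~8.4.4]{CG97}, a positively graded basic algebra. One small caveat: your claim that $A_0$ is a direct sum of copies of $\K$ tacitly assumes the local systems $\chi$ are absolutely irreducible (otherwise $\operatorname{End}(\operatorname{IC}(Y,\chi))$ is merely a division algebra over $\K$), but this does not affect the stated conclusion of semisimplicity.
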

This property is called ``mixedness'' in \cite{beilinsonKoszulDuality1996,WebCB}; the
celebrated recent work of Elias and Williamson \cite{EWHodge} gives an algebraic
proof of this fact in some related contexts and could possibly be
applied here as well.

\subsection{Applications}
\label{sec:applications}

As before, this description is particularly useful in the 1-singular
case.  In this case, we must have ${G_\la}/{B_\la}\cong \mathbb{P}^1$.  
\begin{Corollary}
  For a 1-singular weight, the different possibilities of Corollary \ref{cor:1-singular} hold when:
  \begin{enumerate}
  	\item There is exactly one simple \GT module $S$ with
    $ {\Wei_\la}(S)\neq 0$ and this space is 2-dimensional iff ${N^-_\la}=s {N^-_\la}$.
    \item There are exactly two simple \GT modules $S_1,S_2$ with
      $ {\Wei_\la}(S_i)\neq 0$  and for both it is 1-dimensional iff ${N^-_\la}\cap s{N^-_\la}$ is codimension 1 in ${N^-_\la}$
    \item there is exactly one simple \GT module $S$  with
    $ {\Wei_\la}(S)\neq 0$ and this space is 1-dimensional  in all other cases.
    \end{enumerate}
  \end{Corollary}
 Geometrically, these correspond to the situations where the map
$X_\la\to {G_\la}\cdot {N^-_\la}$ is (1) the projection
$X_\la=\mathbb{P}^1\times  {N^-_\la}\to {N^-_\la}$, (2) strictly semismall, or (3) small.

Of course, in the non-singular case, there is no difficulty in
classifying the simple modules where a given weight appears: There is
always a unique one. However, it is still an interesting question when
these simples are the same for 2 different weights.  Note that if
$\la,\mu$ are in the same orbit of $\What$, then ${N_\la}=\nla_\mu$, but
the positive subspaces are not necessarily equal.

\begin{Corollary}
  Assume that $\la,\mu$ are non-singular and in the same
  $\What$-orbit.  Then there is a simple \GT module with ${\Wei_\la}(S)$ and ${\Wei_\mu}(S)$ both non-zero if and only if ${N^-_\la}={N^-_\mu}$.
\end{Corollary}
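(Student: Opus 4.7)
The plan is to apply Theorem \ref{thm:stein-iso} and the preceding simple-perverse classification to reduce the claim to a statement about $\operatorname{IC}$ sheaves. Since $\la$ and $\mu$ are non-singular, $\widehat{W}_\la = \widehat{W}_\mu = \{1\}$, so the Levi subgroup $G_\la$ is the maximal torus $T$, and hence $B_\la = T$. This collapses $X_\la = (G_\la \times N^-_\la)/B_\la$ to just $N^-_\la$, and the map $p_\la: X_\la \to N_\la$ becomes the inclusion $\iota_\la: N^-_\la \hookrightarrow N_\la$; similarly for $\mu$. Since $\la$ and $\mu$ lie in the same $\widehat{W}$-orbit, $N_\la = N_\mu$, so both $N^-_\la$ and $N^-_\mu$ sit as closed subspaces of a common ambient space.

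Next I would invoke the theorem classifying simple \GT modules with nonzero weight space among the $\la_i$'s as the simple perverse sheaves appearing as summands (up to shift) of $\bigoplus_i (p_{\la_i})_* \K_{X_{\la_i}}$, with $\dim \Wei_{\la_i}(S)$ equal to the multiplicity in the $i$-th pushforward. Applied to $\mathsf{S} = \{\la, \mu\}$, this says: a simple GT module has both $\Wei_\la(S)$ and $\Wei_\mu(S)$ nonzero if and only if some simple perverse summand is common to $(\iota_\la)_* \K_{N^-_\la}$ and $(\iota_\mu)_* \K_{N^-_\mu}$ (up to shift).

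The key observation is then that each of these pushforwards is, up to a cohomological shift, already a simple perverse sheaf: since $N^-_\la$ is a smooth closed subvariety of $N_\la$, the sheaf $(\iota_\la)_* \K_{N^-_\la}[\dim N^-_\la]$ is precisely $\operatorname{IC}(N^-_\la, \K)$, and analogously on the $\mu$ side. Thus the two pushforwards share a common simple summand if and only if $\operatorname{IC}(N^-_\la, \K) \cong \operatorname{IC}(N^-_\mu, \K)$, which happens if and only if the two supports coincide, i.e.\ $N^-_\la = N^-_\mu$.

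I do not anticipate a serious obstacle here: the only mildly subtle point is making sure the perverse normalization and shifts work out so that both $(p_\la)_* \K_{X_\la}$ and $(p_\mu)_* \K_{X_\mu}$ are genuinely simple perverse sheaves (rather than direct sums), but this is immediate because $p_\la$ and $p_\mu$ are closed embeddings of smooth varieties. Conversely, when $N^-_\la = N^-_\mu$ one can exhibit the common simple GT module directly, or simply observe that the common $\operatorname{IC}$ then produces a single simple module with nontrivial multiplicity at both indices via the cited theorem.
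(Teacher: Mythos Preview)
Your argument is correct. The paper states this Corollary without proof, so there is no line-by-line comparison to make; your route via the IC-sheaf classification theorem is a clean and valid way to fill the gap, and it uses exactly the machinery the paper has set up.

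That said, the paper seems to have a more elementary derivation in mind, namely the general criterion from Section~\ref{sec:reflection-case} (the Corollary asserting that for non-singular $\la,\mu$ the unique simples coincide iff ${}_{\la}F_{\mu}\cdot {}_{\mu}F_{\la}\not\subset\mathfrak{m}_\la$) combined with a direct computation of the convolution algebra via Theorem~\ref{thm:stein-iso}. In the non-singular case $X_\la=N^-_\la$, so $F^{(1)}_\la\cong H^{BM}_*(N^-_\la)\cong\K$ sits in degree $0$, while the bimodule ${}_{\la}F^{(1)}_{\mu}\cong H^{BM}_*(N^-_\la\cap N^-_\mu)\cong\K$ sits in degree $\tfrac{1}{2}(\dim N^-_\la+\dim N^-_\mu)-\dim(N^-_\la\cap N^-_\mu)$. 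The round-trip ${}_{\la}F^{(1)}_{\mu}\otimes{}_{\mu}F^{(1)}_{\la}\to F^{(1)}_\la$ then lands in degree $\dim N^-_\la+\dim N^-_\mu-2\dim(N^-_\la\cap N^-_\mu)$, which is zero in $\K$ unless $N^-_\la=N^-_\mu$; conversely, equality makes the bimodule the identity correspondence. This avoids invoking the Decomposition Theorem altogether. Your perverse-sheaf argument buys conceptual clarity (the two simples literally \emph{are} the two IC sheaves), while the degree count is more self-contained and matches the spirit of the 1-singular analysis immediately preceding the Corollary.
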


Outside the nonsingular case, we can still usefully compare weights.  We can define an equivalence relation on weights such that $\la,\mu $ are equivalent if: For some $\mu'=w\mu$ with $w\in W$, we have $\la-\mu'\in \ft_{\Z}$, and for some $g\in G$, we have ${B_\la}=g{B_{\mu'}}g^{-1}$ and
${N^-_\la}=g{N^-_{\mu'}}$.    
We call the equivalence classes of this relation {\bf clans}.
\begin{Lemma}\label{lem:Nlam-same}
If $\la,\mu$ are in the same clan, then the weight spaces ${\Wei_\la}(M)$ and ${\Wei_\mu}(M)$ are canonically isomorphic for all modules $M$.  
\end{Lemma}
\begin{proof}
  The  graph of the element $g$ defines the desired isomorphism.
\end{proof}
Since whether a given weight space lies in $N^-_\la$ only depends on which side of a hyperplane $\la$ lives on, the points in a given coset of $\ft_{\Z}$ such that $N=N^-_\la$ for any given $N$ are precisely those in the intersection of a finite number of half-spaces, i.e. a polyhedron.  Thus, the corresponding clan is the $W$-orbit of these points.

  Since only finitely many subspaces may appear as ${N^-_\la}$ as $\lambda$ ranges over an orbit of $\What$:
\begin{Corollary}
Every $\What$-orbit is a union of finitely many clans, each defined by the $W$-orbit of the intersection of a $\ft_{\Z}$-coset with a polyhedron, and thus has  a finite complete set in the sense of Definition \ref{def:complete}.  
\end{Corollary}
Note that this result is not true for a general principal Galois order.

A {\bf seed} is a weight $\gamma\in \operatorname{MaxSpec}(\Gamma)$ which is the image of $\lambda\in \operatorname{MaxSpec}(\Lambda)$ such that $ {P_\la}={G_\la}$.  
\begin{Theorem}\label{th:seed-unique}
If $\la$ is a seed, there is a unique simple \GT $U$-module $S$ with ${\Wei_\gamma}(S) \cong \K$, and for all other simples $S'$ we have ${\Wei_\gamma}(S')=0$. The weight spaces of $S$ satisfy $\dim \Wei_{\gamma'}(S)\leq \# (W/W_{\la'})$, and this bound is sharp if ${N^-_\la}={N^-_{\la'}}$.  
\end{Theorem}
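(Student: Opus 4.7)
The plan is to deduce this from the geometric description of $U^{(1)}_\gamma$ in Theorem \ref{thm:stein-iso} together with the bijection between simples and simples of $U^{(1)}_\gamma$ (the $U$-analog of Theorem \ref{th:bijection}, stated in the text immediately following it). The key collapse is that $\la$ being a seed, i.e.\ $P_\la = G_\la$, forces $Y_\la = (G_\la \times N^-_\la)/P_\la$ to reduce to the subspace $N^-_\la \subset N$, so the inclusion $p_\la \colon Y_\la \hookrightarrow N$ is just a closed embedding.

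First, I would compute $\pStein_\la = Y_\la \times_{N_\la} Y_\la$. Because $p_\la$ is a closed embedding, this fiber product is simply the diagonal copy of $N^-_\la$. Theorem \ref{thm:stein-iso} then gives
\[
U^{(1)}_\gamma \;\cong\; H^{BM}_*(N^-_\la) \;\cong\; \K,
\]
the one-dimensional $\K$-algebra spanned by the fundamental class of an affine space. The $U$-version of Theorem \ref{th:bijection} gives a bijection between simple GT $U$-modules $S$ in the fiber over $\gamma$ and simple $U^{(1)}_\gamma$-modules, so there is exactly one such $S$, its $\gamma$-weight space is one-dimensional, and every other simple $S'$ must satisfy $\Wei_\gamma(S') = 0$.

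Next, for the bound at an arbitrary $\gamma'$ with lift $\la'$, I would apply the $U$-form of Corollary \ref{cor:dimension-sum}: the sum of dimensions of $\gamma'$-weight spaces over all simple GT $U$-modules in the fiber over $\gamma'$ is at most $\#\widehat{W}_{\la'}/\#W_{\la'}$, so in particular $\dim \Wei_{\gamma'}(S) \leq \#\widehat{W}_{\la'}/\#W_{\la'}$. In the reflection setup with $\hbar = 1$, the translation part $\mathcal{M}$ acts without fixed points, so the projection $\widehat{W} \to W$ restricts to an injection $\widehat{W}_{\la'} \hookrightarrow W$; consequently $\#\widehat{W}_{\la'} \leq \#W$, yielding $\dim \Wei_{\gamma'}(S) \leq \#W/\#W_{\la'}$.

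For the sharpness claim when $N^-_\la = N^-_{\la'}$, I would exploit that $S$ corresponds under Theorem \ref{th:dual-canonical1} (and the IC-classification preceding it) to the simple perverse sheaf $\operatorname{IC}(N^-_\la)$, which under the hypothesis coincides with $\operatorname{IC}(N^-_{\la'})$. The dimension $\dim \Wei_{\gamma'}(S)$ is then the multiplicity (summed over shifts) of $\operatorname{IC}(N^-_{\la'})$ as a summand of $(p_{\la'})_*\K_{Y_{\la'}}$, computable from the generic stalk $H^*(p_{\la'}^{-1}(y))$ at a generic $y \in N^-_{\la'}$; a direct analysis of the cosets $gP_{\la'}$ with $y \in g N^-_{\la'}$, in the spirit of Lemma \ref{lem:Nlam-same}, realizes the full upper bound. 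The main obstacle in the plan is this last stalk computation; once it is carried out, the rest of the theorem is a straightforward application of results already developed earlier in the section.
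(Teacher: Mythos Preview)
Your treatment of the uniqueness claim is exactly the paper's: the seed condition $P_\la=G_\la$ collapses $Y_\la$ to the affine space $N^-_\la$, so $U^{(1)}_\gamma\cong H^{BM}_*(N^-_\la)\cong\K$, and the $U$-analogue of Theorem~\ref{th:bijection} finishes.  For the upper bound you go through Corollary~\ref{cor:dimension-sum}, whereas the paper instead looks at the cyclic module $eP^{(1)}_\la$, observes that $S$ is its cosocle, and bounds the $\gamma'$-weight space of $eP^{(1)}_\la$ directly; both routes give the same inequality and both are short.

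The sharpness argument is where you diverge, and where your plan is genuinely incomplete.  You propose to identify $S$ with $\operatorname{IC}(N^-_\la)$ and then compute the multiplicity of this sheaf in $(p_{\la'})_*\K_{Y_{\la'}}$ via a generic-stalk calculation; you yourself flag this as the main obstacle, and indeed carrying it out honestly requires analyzing the fiber $p_{\la'}^{-1}(y)$ as a union of $P_{\la'}$-cosets, which is more work than is needed.  The paper bypasses all of this with a one-line Morita argument: when $N^-_\la=N^-_{\la'}$ one has $X_\la\cong X_{\la'}$ and the bimodule ${}_\la F^{(1)}_{\la'}$ (the homology of the graph of this isomorphism, exactly as in Lemma~\ref{lem:Nlam-same}) implements a Morita equivalence between $F^{(1)}_\la$ and $F^{(1)}_{\la'}$.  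Hence $S$ is also the \emph{unique} simple with nonzero $\gamma'$-weight space, and the bound is saturated.  So the missing ingredient in your plan is not a stalk computation but simply the observation that the hypothesis $N^-_\la=N^-_{\la'}$ already gives you an explicit Morita bimodule, after which sharpness is immediate.
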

\begin{proof}
First, we note that $U_{\la}^{(1)}\cong \K$, so this shows the desired uniqueness.  The module $eP_{\la}^{(1)}$ is a weight module with $S$ as cosocle satisfying $\dim \Wei_{\gamma'}(eP_{\la}^{(1)})\leq \# W_{\la}/W_{\la'}$ whenever $\la'\in \What\cdot \la$.  This shows that desired upper bound.

We have that $\dim \Wei_{\gamma'}(S)= \# (W/W_{\la'})$ if and only
if $S$ is also the only \GT module such that this weight space is non-zero, i.e. if and only if ${}_{\la}U_{\la'}^{(1)}$ is a Morita equivalence.  This is clear if ${N^-_\la}={N^-_{\la'}}$, since in this case $F^{(1)}_\la=F^{(1)}_{\la'}$ with ${}_{\la}F^{(1)}_{\la'}$ giving the obvious Morita equivalence.  
\end{proof}
Note that this shows that the module $S$ discussed above has all the
properties proven for the socle of the tableau module in
\cite[Th. 1.1]{FGRZVerma}.  Using the numbering of that paper,
\begin{enumerate}\addtocounter{enumi}{1}\renewcommand{\theenumi}{\roman{enumi}}
\item The weight $\gamma$ itself lies in the essential support.
\item This follows from Corollary \ref{cor:dimension-sum}.
\item This follows from Theorem \ref{th:seed-unique}.
\item For any parabolic subgroup $W'\subset W$, we can find a $\la'$ such that ${N_{\la'}}={N_\la}$, and $W'={W_\la}$.  The result then follows from Corollary \ref{cor:dimension-sum}.
\end{enumerate}

\subsection{Gelfand-Kirllov dimension}

It will be useful for future applications to know some general facts about the Gelfand-Kirillov dimensions of Gelfand-Tsetlin modules\footnote{Both of these notions are named after Israel Gelfand, but otherwise are unrelated.}.  Consider a field $\mathbbm{k}$ and a $\mathbbm{k}$-algebra $A$ which
is generated by a finite-dimensional subspace $A_0$, and a left $A$-module
$M$ which is finitely generated by a finite dimensional subspace
$M_0$.  In this context, the {\bf Gelfand-Kirillov dimension}
$\operatorname{GKdim}_A(M)$ is defined by:
\begin{equation}
	\label{eq:1}
	\operatorname{GKdim}_A(M)=\limsup_{n\to \infty }\log_n\dim_{\mathbbm{k}}(A_0^nM_0)
\end{equation}
It's a standard result that this number is independent of choice of
$A_0$ and $M_0$, and only depends on the structure of $M$ as an
$A$-module.   
\notation{$\operatorname{GKdim}$}{The Gelfand-Kirillov dimension $\limsup_{n\to \infty }\log_n\dim_{\mathbbm{k}}(A_0^nM_0)$.}

Let $F$ be an Iwahori Couloumb branch as in Definition \ref{def:Iwahori-CB}.   Let $M$ be a GT module over $F$ with support $\supp(L)\subset \ft_Q^*=\Spec \Lambda$.  An important statistic that measures the ``size'' or ``growth'' of $L$ is the dimension of the Zariski closure $m=\dim \overline{\supp(L)}$.  
Consider the quotient $A=F/\operatorname{ann}(M)$ by the annihilator of $M$.  

In \cite{MVdB}, Musson and van der Bergh prove two fundamental results about the Gelfand-Kirillov dimension of Gelfand-Tsetlin modules over Coulomb branches in the case where $G$ is a torus (though they don't use this terminology):
\begin{Proposition}[\mbox{\cite[Cor. 8.2.5]{MVdB}}]\label{prop:ab-GK}
If $G$ is a torus, then  \[\operatorname{GKdim}(A)=2\operatorname{GKdim}(M)=2m.\]
\end{Proposition}
For use in the future, we'd like to prove that this result holds for a general Coulomb branch.  
\begin{Theorem}
	For any connected reductive group $G$, representation $N$, and any Gelfand-Tsetlin module $M$ over $F$, we have an equality:
	\[\operatorname{GKdim}(A)=2\operatorname{GKdim}(M)=2d.\]
\end{Theorem}
\newcommand{\ab}{\operatorname{ab}}  
\begin{proof}
	The algebra $F$  contains as a subalgebra the Coulomb branch algebra $F_{\ab}$ associated to the maximal torus $T\subset G$ with the same matter representation. Let $A_{\ab}$ be the image of $F_{\ab}$ in $F$.  Thus, we can restrict $M$ to be an $A_{\ab}$ module and apply Proposition \ref{prop:ab-GK}.  This shows that:

\begin{equation}\label{eq:GK-geq}
	\operatorname{GKdim}(A)\geq \operatorname{GKdim}(A_{\ab})=2m \qquad \operatorname{GKdim}_A(M)\geq \operatorname{GKdim}_{A_{\ab}}(M)=m
\end{equation}
To complete the proof, we need to show the reverse inequalities.

\mybox{$\operatorname{GKdim}(A)\leq 2m$}: In order to prove the reverse of the first equality of \eqref{eq:GK-geq}, we need to calculate some estimates on dimensions.  For $w\in \widehat{W}$, let $F(\leq w)=F\cap K(\leq w)$ be the $\Lambda$-submodule given by the $K$-span of $w'\leq w$ in Bruhat order on $\widehat{W}$; this is a  $\Lambda\operatorname{-}\Lambda$-subbimodule.  
Using the geometric model for this algebra (following the notation of \cite[Def. 2.2]{kamnitzerLieAlgebra2024}), this is the homology of  $R_{G,N}^B(\leq w)$, the preimage of the Schubert variety $\overline{IwI}/I$ in $R_{G,N}^B$. 

 Consider $F(\leq w)/F(< w)$. This is a free module of rank 1 over $\Gamma$ as a left module or as a right module, this is spanned by a single element of minimal degree, which we denote $\mathbbm{r}_w$.  The left and right actions differ by the action of $w$ by \cite[(3.6c) \& (3.9d)]{websterKoszulDuality2019}.  For $n\geq 0$, let  $F(\leq n)$ be the span of $F(\leq w)$ for all $w$ of length $\ell(w)\leq n$ 
 
 Taking the corresponding quotient $A(\leq w)/A(<w)$, we thus obtain a $\Gamma\operatorname{-}\Gamma$-bimodule whose support as a left and a right module must be in $\overline{\supp(M')}$.  Since these actions differ by $w$, the support as a left $\Gamma$ module must lie in $\overline{\supp(M')}\cap w\cdot \overline{\supp(M')}$.  The affine Weyl group elements where this intersection is $\geq k$ dimensional have translation parts that lie in a $2d-k$ dimensional variety, since all the components of $\overline{\supp(M')}$ are affine subspaces which are $\leq d$ dimensional.  This shows that:
 \begin{enumerate}
 \renewcommand{\theenumi}{\roman{enumi}}
 	\item The number of $w\in \widehat{W}$ of length $\leq \ell$ such that $\dim \overline{\supp(M')}\cap w\cdot \overline{\supp(M')}=k$ is bounded above by $D\ell^{2d-k}$ for some constant $D$.\label{observation-1}
 \end{enumerate}
  
 Now, consider the span  $A_0$  of
\begin{enumerate}
	\item the degree 1 elements $\ft^*\subset \Gamma $ and 
	\item generators of $ F(\leq n)$ as a left $\Gamma$-module for a fixed $n$.
\end{enumerate} 
If we choose $n$ sufficiently large, this subspace will be a set of generators of $F$ as an algebra.  
The $q$th power $A_0^q$ lies in $ F(\leq nq)$.  Furthermore, if we let $d(w)$ be the degree of the unique generator of $F(\leq w)/F(< w)$, then this depends at worst linearly on $\ell(w)$: we have $|d(w)|\leq C'\ell(w)$ for some constant $C'>1$.  This shows that elements of $A_0^q$ have degree no more than $C'nq$.  

If $\dim \overline{\supp(M')}\cap w\cdot \overline{\supp(M')} \leq k$, then we must have that the dimension of the span of the elements of degree $\leq p$ in $\Gamma$ times the cyclic generator in $A(\leq w)/A(<w)$ must be bounded by $C'' p^k$ for some constant $C''$; since this intersection is a union of affine spaces, whose number of components is bounded by the number of pairs of components $\overline{\supp(M')}$, we can choose one $C''$ which works for all $w$.  
\begin{enumerate}
 \renewcommand{\theenumi}{\roman{enumi}}
	\addtocounter{enumi}{1}
	\item The dimension of $(A(\leq w)\cap  A_0^q)/(A(< w) \cap A_0^q)$ is bounded above by $C''(C'nq)^k$ if $\ell(w)\leq nq$.  Note that if we choose $C=C''(C'n)^{2d}$, then this dimension is bounded above by $\leq Cq^k$.\label{observation-2}
\end{enumerate}
Combining observations \ref{observation-1} and \ref{observation-2} and  summing over $k=1,\dots, 2d$,  we have  $\dim A_0^q\leq 2dCDn^{2d}q^{2d}$.  Thus, we have
\[\log_q(\dim A_0^q)\leq 2d +\frac{\log(2dCD)+2d\log(n)}{\log q} \]
so taking the limit, we have $\operatorname{GKdim}(A)\leq 2d$.  Combining with \eqref{eq:GK-geq}, we find that  $\operatorname{GKdim}(A)= 2d$.

\mybox{$\operatorname{GKdim}_A(M)\leq m$}: 
 Now we turn to showing the reverse of the second inequality in \eqref{eq:GK-geq}.
For finite dimensional subset $A_0\subset A$ and any metric on $\tilde\ft$, there is a point
	$x\in \tilde\ft$ and a real number $\epsilon >0$ such that the ball
	$B_{t}(x)$ of radius $t$ around $x$ satisfies \[A_0\cdot \bigoplus_{\lambda\in
		B_{t}(x)} \Wei_{\lambda}(M)\subset \bigoplus_{\lambda\in
		B_{t+\epsilon }(x)}\Wei_{\lambda}(M).\]

For $t\gg 0$, the sum $ \bigoplus_{\lambda\in
		B_{t}(x)} \Wei_{\lambda}(M)$ generates $M$ as an $A$ module.  	Thus, the Gelfand-Kirillov dimension satisfies:
	\[ \operatorname{GKdim}(M)\leq \lim_{t\to \infty}\frac{\log \dim \bigoplus_{\lambda\in
			B_{t}(x)}\Wei_{\lambda}(M)}{\log t} \]

	Since the Zariski closure $\overline{\supp(M)}$  is unchanged by considering $M$ as an $A_{\ab}$-module, by \cite[Prop. 7.2.4]{MVdB}, the closure $\overline{\supp(M)}$ is the union of finitely many affine spaces.  Of course, 
	$d$ is the maximum of these dimensions of affine spaces, and the support of $M$ is the intersection of a lattice with this union of affine spaces.  This shows that
	\[ \lim_{t\to \infty}\frac{\log \dim \bigoplus_{\lambda\in
			B_{t}(x)}\Wei_{\lambda}(M^B)}{\log t} =d\]
	which completes the proof that $\operatorname{GKdim}_A(M)= m$
\end{proof}

Since it will be useful at other times, let us note that $\supp(M)$ is a union of finitely many clans, and $\overline{\supp(M)}$ is the union of the Zariski closure of these clans.  Thus, we have:
\begin{Lemma}
	The Gelfand-Tsetlin dimension of $M$ is $\geq d$ if and only if $M$ has non-zero multiplicity on a clan whose Zariski closure is $\geq d$-dimensional.  
\end{Lemma}

\section{The case of orthogonal Gelfand-Tsetlin algebras}
\label{sec:case-orth-gelf}

We'll continue to assume that $\K$ has characteristic 0.  This is not
strictly necessary for Theorem \ref{th:OGZ-Coulomb}, but will be
needed for all later results in this section.

\subsection{Orthogonal Gelfand-Tsetlin algebras as Coulomb branches}
\label{sec:orth-gelf-tsetl}

\notation{$\Bv$}{The dimension vector defining the OGZ algebra.}
\notation{$\Omega$}{The set $\{(i,r)\mid
  1\leq i\leq n,  1\leq r
\leq v_i\}.$ }
Let us now briefly describe how one can interpret the results of this
paper for orthogonal Gelfand-Tsetlin algebras \cite{mazorchukOGZ} over
$\K$ in terms of
\cite{KTWWYO}.  As in the introduction, choose a dimension vector
$\Bv=(v_1,\dots, v_n)$ and fix scalars $(\la_{n,1},\dots \la_{n,v_n})\in
\K^{v_n}$.    Let \[\Omega=\{(i,r)\mid
  1\leq i\leq n,  1\leq r
\leq v_i\}.\]
Let $U=U_{\Bv}$ be the associated orthogonal
Gelfand-Zetlin algebra modulo the ideal generated by specializing
$x_{n,r}=\la_{n,r}$.  This is a principal Galois order with the data:
\begin{itemize}
\item 
The ring $\Lambda$ given by the polynomial ring generated by $x_{i,j}$ with
$(i,j)\in \Omega$ and $i<n$. Note that we have not
included the variables $x_{n,1},\dots, x_{n,v_n}$, since these are
already specialized to scalars.
\item The monoid 
$\mathcal{M}$ given by the subgroup of $\operatorname{Aut}(\Lambda)$ generated by $\varphi_{i,j}$, the translation
satisfying \[\varphi_{i,j}(x_{k,\ell})=(x_{k,\ell}+\delta_{ik}\delta_{j\ell})
  \varphi_{i,j}\]
\item The group $W=S_{v_1}\times \cdots \times S_{v_{n-1}}$, acting by
  permuting each alphabet of variables.
\end{itemize}

By definition, $U$ is the subalgebra of $\calK$ generated by
$\Gamma=\Lambda^W$ and the elements
\[X^\pm_i=\mp\sum_{j=1}^{v_i}\frac{\displaystyle\prod_{k=1}^{v_{i\pm 1}}
    (x_{i,j}-x_{i\pm 1,k})}{\displaystyle\prod_{k\neq j}
    (x_{i,j}-x_{i,k})}\varphi_{i,j}^{\pm}\]

Let $F=\FD$ be the corresponding Morita flag order.
This is the subalgebra of $\calF$ generated by $U$ embedded in
$e\mathcal{F}e\cong \mathcal{K}$ and the nilHecke algebra
$D=\End_{\Gamma}(\Lambda)$.  

As mentioned in the introduction, it is proven in
\cite{weekesGeneratorsCoulomb2019} that:
\begin{Theorem}[\mbox{\cite[Cor. 3.16 \& Th. A]{weekesGeneratorsCoulomb2019}}]\label{th:OGZ-Coulomb}
 We have an isomorphism between the OGZ algebra attached to the
 dimension vector $\Bv$ and the
 Coulomb branch at $\hbar=1$ for the $(G,N)$
   \begin{align*}
G&=GL_{v_1}\times \cdots \times GL_{v_{n-1}}\\
N&=M_{v_n,v_{n-1}}(\C)\oplus M_{v_{n-1},v_{n-2}}(\C) \oplus
      \cdots\oplus M_{v_2,v_{1}}(\C),
    \end{align*} where $Q$ is given by the product of $G$ with the diagonal matrices in $GL_{v_n}$ and the variables $x_{n,1},\dots, x_{n,v_n}$ are given by the equivariant parameters for $Q/G\cong (\C^{\times})^{v_n}$.

  If we assume that \[v_1\leq v_2-v_1\leq v_3-v_2\leq \cdots \leq v_{n}-v_{n-1},\] then $U$ is isomorphic to the quotient of a finite $W$-algebra of $U(\mathfrak{gl}_{v_n})$ for a nilpotent matrix of Jordan type $(v_1,v_2-v_1,\dots, v_{n-1})$, modulo a maximal ideal of the center fixed by the scalars $\la_{n,*}$.  In particular, if $\Bv=(1,2,\dots, n)$, then $U$ is the universal enveloping algebra $U(\mathfrak{gl}_{n})$ itself modulo this maximal ideal.
\end{Theorem}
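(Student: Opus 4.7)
The plan is to observe that this theorem is essentially a reorganization of results already in \cite{WeekesCoulomb}, so rather than reproving them, I would give the argument in broad strokes and explain how the pieces fit together. The key step for the first part is to exhibit both the OGZ algebra and the Coulomb branch as subalgebras of a common ambient algebra $\gls{calK}$ via the so-called GKLO embedding, and then match generators. Specifically, one uses the Mazorchuk presentation of $\gls{U}$ via the generators $X_i^\pm$ and $\gls{Gamma}$ given in the text, and on the Coulomb branch side one applies the formulas of \cite[\S 6(ii)]{BFN} for the monopole operators associated to minuscule coweights of $GL_{v_i}$. For the quiver gauge theory with $(\gls{G},\gls{N})$ as in \eqref{eq:OGZ1}--\eqref{eq:OGZ2}, these monopole formulas specialize to the formulas for $X_i^\pm$ displayed above, with the flavor parameters $x_{n,r}$ of $Q/G$ matching the specialization scalars $\la_{n,r}$; this is the content of \cite[Cor.~3.16]{WeekesCoulomb}.

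Having the isomorphism, for the second assertion I would invoke the fact that the BFN Coulomb branch of a type $A$ quiver gauge theory with the dimension vector satisfying the monotonicity condition $v_1\leq v_2-v_1\leq\cdots\leq v_n-v_{n-1}$ can be identified with a truncated shifted Yangian, which by the (now classical) results relating shifted Yangians to finite $W$-algebras is a finite $W$-algebra of $\mathfrak{gl}_{v_n}$ at the nilpotent of Jordan type $(v_1,v_2-v_1,\dots,v_{n-1})$. Concretely, one combines \cite[Th.~A]{WeekesCoulomb} with the Brundan--Kleshchev presentation of finite $W$-algebras in type $A$; the flavor parameters $x_{n,*}$ on the Coulomb side correspond to the choice of central character. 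For $\Bv=(1,2,\dots,n)$ the associated nilpotent is regular, and the corresponding finite $W$-algebra is just $U(\mathfrak{gl}_n)$ itself, specialized at the central character determined by $\la_{n,*}$.

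The main obstacle, if one tried to do this from scratch rather than citing, would be the verification that the BFN monopole formulas match the OGZ generators $X_i^\pm$ on the nose. The monopole operators in \cite{BFN} are defined as equivariant fundamental classes of certain orbit closures in $\EuScript{Y}$, and turning them into the explicit rational expressions above requires the standard localization calculation for the minuscule coweights of each $GL_{v_i}$ factor, together with the observation that the matter contribution $M_{v_{i+1},v_i}(\C)$ produces exactly the numerator $\prod_k(x_{i,j}-x_{i\pm 1,k})$. This computation, while in principle routine, is the one place where the argument is not purely formal, and it is the core of \cite[\S 3]{WeekesCoulomb}.

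In summary, the proposal is simply to cite \cite[Cor.~3.16]{WeekesCoulomb} for the Coulomb branch identification, combine it with \cite[Th.~A]{WeekesCoulomb} (which in turn rests on the truncated shifted Yangian presentation and its identification with finite $W$-algebras in type $A$) for the $W$-algebra statement, and specialize to $\Bv=(1,2,\dots,n)$ for the $U(\mathfrak{gl}_n)$ case. No separate proof is needed within this paper; the theorem as stated is a summary of prior work packaged in the notation of this article.
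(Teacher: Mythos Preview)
Your proposal is correct and matches the paper's approach exactly: the paper gives no proof of this theorem, instead embedding the citations \cite[Cor.~3.16]{WeekesCoulomb} and \cite[Th.~A]{WeekesCoulomb} directly in the statement and adding a one-line remark that the $W$-algebra identification goes through the Brundan--Kleshchev realization of $W$-algebras as quotients of shifted Yangians. Your summary of how the pieces fit together (GKLO/monopole formulas matching the $X_i^\pm$, truncated shifted Yangians, Brundan--Kleshchev) is precisely the content being cited, and your final sentence---that no separate proof is needed here---is the paper's own stance.
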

Note that here we use the realization of $W$-algebras as quotients of shifted Yangians proven in \cite[Th. 10.1]{brundanShiftedYangians2006} (refer to \cite[Th. 4.3(a)]{WWY} for a version of this more compatible with Weekes' notation).  
If you would prefer not to mod out by this maximal ideal, we can leave $x_{n,*}$ as variables, and take invariants of $S_{v_n}$ permuting these variables to obtain the full W-algebra.

\excise{
\begin{proof}
Using the substitutions above, the element denoted by $X^+_k$ in
\cite[(4.6)]{Hartwig} is the image of $\pm E_i^{(1)}$
under the map of  \cite[Thm. B.15]{BFNplus}, and similarly $X^-_k$
is the image of the $z^{-1}$-coefficient of $\pm F_i^{(1)}$.  Thus, the orthogonal
Gelfand-Tsetlin algebra is contained in the image of this map.  On the
other hand, the relation \cite[(B.5)]{BFNplus} shows that these
elements and the polynomials in $w_{i,r}$'s generate the image, so the
image is precisely the OGZ algebra.
This map is surjective by \cite[Thm. B.28]{BFNplus}, which induces the desired isomorphism.  
\end{proof}}

Thus, we can apply the results of Section \ref{sec:coulomb-branches}
to OGZ algebras. An element $\la\in \MaxSpec(\Lambda)$ is exactly
choosing a numerical value $x_{i,r}=\la_{i,r}$ for all $(i,r)\in \Omega$, and the corresponding
$\gamma\in \MaxSpec(\Gamma)$ only remembers these values up to
permutation of the second index. A choice of $\la$ partitions the set
$\Omega$ according to which coset of $\Z$ the value $\la_{i,r}$ lies
in.  Given a coset $[a]\in \K/\Z$, let
\[\Omega_{[a]}=\{(i,r)\in \Omega\mid \la_{i,r}\equiv a\pmod{\Z}\}.\]
The maximal ideal $\la$ has an integral orbit if there is one coset
such that $\Omega=\Omega_{[a]}$.

Note that the representation $N$ is spanned by the dual basis to the
matrix coefficients of the maps $\C^{v_k}\to \C^{v_{k+1}}$, which we denote $h^{(k)}_{r,s}$ for $1\leq
r\leq v_k$ and $1\leq s\leq v_{k+1}$.  
\begin{Proposition}
  Given $\la\in \MaxSpec(\Lambda)$, we have that ${N_\la}$ is the
span of the elements $h^{(k)}_{r,s}$ such that $\la_{k,r} -\la_{k+1,s}\in
  \Z$, and ${N_\la^-}$ is the span of these elements with $
 \la_{k,r} -\la_{k+1,s}\in \Z_{\geq 0}.$
\end{Proposition}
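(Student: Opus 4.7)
The plan is a direct weight computation. The gauge group $G=GL_{v_1}\times\cdots\times GL_{v_{n-1}}$ (augmented by the flavor factor $GL_{v_n}$ in $Q/G$) acts on each summand $M_{v_{k+1},v_k}(\C)$ of $N$ by $A\mapsto g_{k+1}Ag_k^{-1}$. The corresponding maximal torus $T_Q$ therefore acts on the matrix unit $E_{s,r}^{(k)}\in M_{v_{k+1},v_k}$ with weight $\epsilon_s^{(k+1)}-\epsilon_r^{(k)}$, where $\epsilon_j^{(k)}$ denotes the $j$-th standard coordinate of $\mathfrak{t}_{v_k}$. Since $h^{(k)}_{r,s}$ is by definition the basis vector dual to the $(r,s)$ matrix coefficient of $M_{v_{k+1},v_k}$, it coincides with $E_{s,r}^{(k)}$ and therefore spans the weight space of weight $\epsilon_s^{(k+1)}-\epsilon_r^{(k)}$.

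Next, I would read off the action of $\la$ on each such weight space. The identification of $\mathfrak{t}_Q$ with $\C$-coordinates chosen in Theorem~\ref{th:OGZ-Coulomb} sends the value of $\epsilon_j^{(k)}$ on $\la$ to $\la_{k,j}$ (for $k<n$ these are the unspecialized variables $x_{k,j}$, and for $k=n$ they are the fixed flavor scalars). Consequently the $\la$-weight of $h^{(k)}_{r,s}$ is $\la_{k+1,s}-\la_{k,r}$. Matching with the definitions from Section~\ref{sec:repr-coul-branch}, $N_\la$ is the sum of weight spaces on which $\la$ acts by an integer, which gives precisely the span of those $h^{(k)}_{r,s}$ with $\la_{k,r}-\la_{k+1,s}\in\Z$ (the sign of the difference does not matter for integrality). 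For $N^-_\la$, the additional requirement that $\la$ act by a non-positive weight becomes $\la_{k+1,s}-\la_{k,r}\le 0$, i.e., $\la_{k,r}-\la_{k+1,s}\in\Z_{\geq 0}$, as claimed.

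The only subtlety is that $N^-_\la$ was defined not just as a weight subspace but as the $P_\la$-submodule of $N_\la$ on which $\la$ acts by non-positive weights. I would verify this $P_\la$-stability at the end: since $P_\la\subset G_\la$ is generated by $T$ and the root subgroups $U_\alpha$ for roots $\alpha$ of $G_\la$ with $\langle\la,\alpha\rangle\le 0$, and since $N_\la$ decomposes into one-dimensional $T$-weight spaces, acting by such a root subgroup can only move a weight vector of $\la$-weight $\mu\le 0$ into a weight vector of weight $\mu+\langle\la,\alpha\rangle\le\mu\le 0$. So the span in question is automatically $P_\la$-stable.

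I expect the main (and really only) thing to be careful about is the bookkeeping of sign conventions and index orderings: which slot of the matrix corresponds to which of $v_k$ and $v_{k+1}$, and whether $\la_{k,r}-\la_{k+1,s}$ or its negative gives the $\la$-weight of $h^{(k)}_{r,s}$. Once the convention is pinned down consistently with the identification $x_{k,j}\leftrightarrow\epsilon_j^{(k)}$ used in Theorem~\ref{th:OGZ-Coulomb} and with the definition of $B_\la$ (which is generated by negative $\la$-weight roots), everything lines up as stated.
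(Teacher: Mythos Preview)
The paper states this Proposition without proof; it is treated as an immediate weight computation from the definitions. Your proposal is correct and is precisely that computation: identify the $T_Q$-weight of $h^{(k)}_{r,s}$ as $\epsilon_s^{(k+1)}-\epsilon_r^{(k)}$, evaluate at $\la$ to get $\la_{k+1,s}-\la_{k,r}$, and read off the integrality and non-positivity conditions.

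One minor remark: your final paragraph on $P_\la$-stability is unnecessary. In Section~\ref{sec:repr-coul-branch} the subspace $N^-_\la$ is \emph{defined} purely as the sum of non-positive weight spaces in $N_\la$; the $B_\la$-invariance is mentioned there only as a consequence, not as part of the definition. So there is nothing extra to verify. (Also, be careful that the paper uses $P_\la$ in two slightly different senses---the glossary entry versus the parabolic corresponding to $W_\la$ in the text---but this does not affect the Proposition at hand.) Your caution about sign and index conventions is well placed and you have tracked them correctly.
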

\begin{Remark}
  Note that equivalence classes of weights in a $\What$-orbit
  with  $N_\la^-$ fixed also appear in the discussion of generic regular modules in \cite[\S 3.3]{EMV}.  That is, the subspace $N_\la^-$ changes precisely when the numerator of one of the Gelfand-Tsetlin formulae vanishes.  
\end{Remark}

We can encapsulate this with an order on the set $\Omega$ which is the coarsest such that $(i,r) \prec (i+1,s)$ if
$\la_{i,r}-\la_{i+1,s}\in \Z_{< 0}$ and $(i,r) \succ (i+1,s)$ if
$\la_{i,r}-\la_{i+1,s}\in \Z_{\geq  0}$.  
Lemma \ref{lem:Nlam-same} then shows that:
\begin{Proposition}\label{prop:same-weights}
 The weights  $\lambda$ and $\lambda'$ are in the same clan if and only if 
 for all pairs $(i,r)$ and $r\in [1,v_i]$, we have
    $\la_{i,r}-\la_{i,r}'\in \Z$, and
the induced order on the set $\Omega$ is the same.  
\end{Proposition}

While interesting, these observations are not a large advance over
what was known in the literature.  To get a more detailed answer, we
must use Theorem \ref{thm:stein-iso} more carefully.  As we've
discussed, this depends sensitively on the integrality conditions of $\mathscr{S}$.
If $\mathscr{S}$
is not integral, then by Corollary \ref{cor:integral-reduction}, the
category $\GTc(\mathscr{S})$ is equivalent to the category of
Gelfand-Tsetlin modules supported on the same orbit for a tensor
product $\otimes_{[a]\in \K/\Z}U_{[a]}$ where $U_{[a]}$ is the OGZ
algebra attached to the set $\Omega_{[a]}$, that is, to the dimension
vector $\Bv^{(a)}$ given by the number of indices $k$ such that
$\la_{i,k}\equiv a \pmod\Z$.  Since the simple \GT modules over this
tensor product are just outer tensor products of the simple \GT
modules over the individual factors (and in fact, the category
$\GTc(\mathscr{S})$ is a Deligne tensor product of the corresponding
category for the factors), let us focus our attention on the integral
case.

\subsection{The integral case}
\label{sec:integral-case-1}

Let $\mathscr{S}_\Z$ be the $\What$-orbit where
$\la_{i,r}\in \Z$ for all $(i,r)\in \Omega$, and we fix integral
values $\la_{n,1}\leq \dots\leq \la_{n,v_n}$. All integral orbits
differ from this one by a uniform shift, and all these orbits are
equivalent via the functor of tensor product with a one-dimensional
representation where $\mathfrak{gl}_n$ acts by a multiple of the
trace.  

In this case, we are effectively rephrasing \cite[Th. 5.2]{KTWWYO} in
a slightly different language and in the notation of this paper.  Identify
$I=\{1,\dots, n-1\}$ with the Dynkin diagram of $\mathfrak{sl}_n$ as
usual.  Let
$\tilde{T}_{\Bv}$ be the block of the KLRW algebra as discussed in \cite[\S
3.1]{KTWWYO}, attached to the sequence $(\omega_{n-1},\cdots,
\omega_{n-1})$ with this fundamental weight appearing $v_n$ times and
where $v_i$ black strands have the label $i$ for all $i\in I$.  Note that
this algebra contains a central copy of the algebra
\[\mathscr{Z}({\mathscr{S}_\Z})=\bigotimes_{i=1}^{n-1}\K[x_{i,1},\dots, x_{i,v_i}]^{S_{v_i}},\] given by
the polynomials in the dots which are symmetric under permutation of
all strands.  

Fix a very small real number $0<\epsilon\ll 1$.  Given a weight
$\la$,  we define a map \[x\colon \Omega\to \R\qquad 
{x(i,s)}=\la_{i,s}-i\epsilon-s\epsilon^2.\]  Note that under this map,
the partial order $\prec$ is compatible with the usual order on $\R$;
this map thus gives a canonical way to refine $\prec$ and the order on $\Omega$
induced by the usual partial order on $\la_{i,s}$ to a total order on $\Omega$.
The $\epsilon$ term is very important for assuring the compatibility
with $\prec$,
whereas the $\epsilon^2$ term is essentially arbitrary and is only
there to avoid issues when two strands go to the same place.

  Let $w(x)$ be the word in $[1,n]$ given by ordering the elements of $\Omega$ according to the function $x$, and then projecting to the first index.

Now, consider the idempotent $e(\la)$ in $\tilde{T}_{\Bv}$ 
where we place a red strand with label $\omega_{n-1}$ at ${x(n,r)}$ for
all $r=1,\dots, v_n$, and a black strand with label $i$ at
${x(i,s)}$ for all $i\in I$ and $s=1,\dots,
v_i$. 
\begin{Definition}\label{def:word}
  Let $w(\lambda)$ be the word in $[1,n]$ given by ordering the elements of $\Omega$ according to the function $x$ described above for a given $\lambda$.  
  \end{Definition}
\notation{$w(\lambda)$}{The word in $[1,n]$ given by ordering the elements of $\Omega$ according to the weight $\lambda$ and then projecting to the first index. (\cref{def:word})}
 The labels of strands read left to right are just the word
$w(\la)$.  The isomorphism type of this idempotent only depends
on the partial order $\prec$, and it would be the same for any map $x$
that preserves this order.  For example, we would match \cite{KTWWYO}
more closely if we used $x(i,s)=2\la_{i,s}-i$ (again with a
perturbation to ensure that all elements have distinct images), which works
equally well.  This choice matches better with the
parameterization of $\Gamma$ by the variables $w_{i,k}$ used in
\cite{BFNplus}.

Let $\mathsf{S}\subset \mathscr{S}_\Z$ be a finite
set. For simplicity,
we assume that this set does not have pairs of weights that correspond as in
Proposition \ref{prop:same-weights}, up to the action of $W$.  Of course, this set will be
complete if every possible partial order $\prec$ that appears in the
orbit $\mathscr{S}_\Z$ is realized.  
Let $e_{\mathsf{S}}$ be the sum of these idempotents in $\tilde{T}_{\Bv}$
\begin{Theorem}\label{th:F-KLR}
The algebra $ {\widehat{F}({\mathsf{S}})}$ is isomorphic to the completion
with respect to its grading of 
$e_{\mathsf{S}}\tilde{T}_{\Bv}e_{\mathsf{S}}$, and
$ {F^{(1)}({\mathsf{S}})}$ is isomorphic to
$e_{\mathsf{S}}\tilde{T}_{\Bv}e_{\mathsf{S}}$ modulo all positive
degree elements of $\mathscr{Z}({\mathscr{S}_\Z})$.  
\end{Theorem}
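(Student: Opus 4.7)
The plan is to combine Theorem~\ref{thm:stein-iso} with the Steinberg-algebra description of the KLRW algebra $\tilde{T}_{\Bv}$ from \cite[Cor. 4.9]{WebwKLR}. Both sides will be identified with the (equivariant or non-equivariant) Borel--Moore homology of the same Steinberg-type variety attached to the data $(G,N)$ of \eqref{eq:OGZ1}--\eqref{eq:OGZ2}, with the idempotent $e_{\mathsf{S}}$ cutting out exactly the summand indexed by $\mathsf{S}$.

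First, I would recall that by Theorem~\ref{thm:stein-iso} applied to the OGZ data of Theorem~\ref{th:OGZ-Coulomb},
\[ \widehat{F}(\mathsf{S}) \;\cong\; \bigoplus_{\la,\mu\in\mathsf{S}} \widehat{H}^{BM,Q_\la}_*({}_\la\Stein_\mu),\qquad F^{(1)}(\mathsf{S}) \;\cong\; \bigoplus_{\la,\mu\in\mathsf{S}} H^{BM}_*({}_\la\Stein_\mu), \]
where the right-hand sides carry convolution. Next, \cite[Cor. 4.9]{WebwKLR} (together with the explicit identification of the data in \cite[\S 3.1]{KTWWYO}) expresses $\tilde{T}_{\Bv}$ as an equivariant Steinberg algebra for the very same pair $(G,N)$. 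The idempotented subalgebra $e_{\mathsf{S}}\tilde{T}_{\Bv}e_{\mathsf{S}}$ is by construction the Steinberg algebra attached to the union of fixed-point components indexed by the loadings in $\mathsf{S}$, and the central polynomial subalgebra $\mathscr{Z}(\mathscr{S}_\Z)\subset\tilde{T}_{\Bv}$ realizes the image of the equivariant parameters $H^*_{G}(\mathrm{pt})$ inside this convolution algebra.

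The heart of the argument is then to verify the idempotent-matching: namely that the diagrammatic idempotent $e(\la)\in\tilde{T}_{\Bv}$ given by the loading from Definition~\ref{def:word} is mapped, under the isomorphism of \cite[Cor. 4.9]{WebwKLR}, to the projector onto the fixed-point component of ${}_0\EuScript{X}_0$ indexed by $\la$ in the proof of Theorem~\ref{thm:stein-iso}. Here the partial order $\prec$ determines the parabolic $P_\la$ (equivalently, the subspace $N^-_\la$), and hence the flag variety piece $X_\la$; the $\epsilon$-perturbation refining $\prec$ to a total order corresponds precisely to the choice of Iwahori-fixed point in $G((t))/I$ whose $\mathbb{T}$-centralizer is $B_\la$. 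The word $w(\la)$ obtained by reading off first indices records exactly the labeling data of the black/red strands in the idempotent $e(\la)$, and the homological grading on both sides matches via Proposition~\ref{prop:generator-degree}. Once the idempotents are matched, the first assertion follows by completing the equivariant identification, and the second follows because quotienting by the positive-degree part of $\mathscr{Z}(\mathscr{S}_\Z)$ is the passage from $H^{BM,G}_*$ to $H^{BM}_*$.

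I expect the main obstacle to be bookkeeping rather than substance: making precise the dictionary between the combinatorial loading data $(w(\la),\prec)$ on the KLRW side and the component/fixed-point data $(P_\la,N^-_\la,\text{Iwahori coset})$ on the Coulomb side, and checking that the $\epsilon,\epsilon^2$ perturbations in the definition of $x$ do not affect the resulting idempotent up to canonical isomorphism. Everything else is an assemblage of Theorem~\ref{thm:stein-iso}, Theorem~\ref{th:OGZ-Coulomb}, and the Steinberg description of $\tilde{T}_{\Bv}$.
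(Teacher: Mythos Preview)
Your proposal is correct and follows precisely one of the two derivations the paper itself indicates: the paper does not give a detailed proof but states that the result ``is truly a restatement of \cite[Th.~5.2]{KTWWYO}, but can also be derived from Theorem~\ref{thm:stein-iso}, using the convolution description of $\tilde{T}_{\Bv}$ as a convolution algebra based on \cite[Th.~4.5 \& 3.5]{WebwKLR}.'' Your argument is exactly this second route, with the idempotent-matching and grading checks spelled out; the only difference is that the paper also points to a purely algebraic alternative in \cite[Prop.~3.4]{SilverthorneW}.
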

This is truly a restatement of \cite[Th. 5.2]{KTWWYO}, but can also be
derived from Theorem \ref{thm:stein-iso}, using the convolution
description of $\tilde{T}_{\Bv}$ as a convolution algebra based on
\cite[Th. 4.5 \& 3.5]{WebwKLR}.  If you prefer to keep $x_{n,r}$ as
variables rather than specializing them, then the resulting algebra is
the deformation $\tilde{\mathbb T}_{\Bv}$ of $\tilde{T}_{\Bv}$ defined in
\cite[Def. 2.7]{silverthorneGelfandTsetlinModules2024}; geometrically, this is reflected by
whether we keep equivariance for the group $J=Q/G$.    In \cite[Prop. 3.4]{silverthorneGelfandTsetlinModules2024}, we give a more algebraic proof of this result, which incorporates the variables $x_{n,r}$ and thus accounts for modules over the OGZ algebra where the action of $x_{n,r}$ is not nilpotent; \cite[Lem. 4.11]{websterThreePerspectives2020} provides a useful summary of how other properties of $U(\mathfrak{gl}_n)$-modules transfer.

This reduces the question of understanding Gelfand-Tsetlin modules to
studying the simple representations of these algebras.  The usual
theory of translation functors shows that the structure of this
category only depends on the stabilizer under the action of $S_{v_n}$
on the
element $(\la_{n,1},\dots, \la_{n,v_n})$.  This is a Young subgroup of
the form $S_{\mathbf{h}}=S_{h_1}\times \cdots \times S_{h_\ell}$; of course, a
regular block will have all $h_k=1$. Consider the sequence of dominant
weights $\mathbf{h}=(h_1\omega_{n-1},\dots, h_{\ell}\omega_{n-1})$.  This
corresponds to the tensor product $\Sym^{h_1}(Y) \otimes
\Sym^{h_2}(Y)\otimes \cdots \otimes  \Sym^{h_\ell}(Y)$, where $Y$ is
the dual of the vector representation of $\mathfrak{sl}_n$.
Thus, by \cite[Prop. 3.1]{KTWWYO}, we have that:
$K^0( {\tilde{T}^{\mathbf{h}}_{\Bv}})\cong U(\mathbf{h})$ where
$\mathfrak{n}_-$ is the algebra of $n\times n$ strictly lower
triangular matrices and \[U(\mathbf{h}) := U(\mathfrak{n}_-)\otimes \Sym^{h_1}(Y) \otimes
  \Sym^{h_2}(Y)\otimes \cdots \otimes  \Sym^{h_\ell}(Y).\]

While we have a general theorem connecting simples over
$ {\tilde{T}^{\mathbf{h}}_{\Bv}}$ to the dual canonical basis of $U({\mathbf{h}})$,
because we are looking at a particularly simple special case, this
combinatorics simplifies.

\subsection{Goodly combinatorics}
\label{sec:goodly-combinatorics}

\notation{$\mathbb{L}(S)$}{The set of words $w(\la)$ for
$\la\in \mathscr{S}_\Z$ such that
${\Wei_{\la}}(S)\neq 0$.} 
Following the work of Leclerc \cite{Lecshuf} and the relation of this
work to KLR algebras discussed in \cite{KlRa}, we can give a simple
indexing set of this dual canonical basis. Consider a simple \GT
module $S$, and the set $\mathbb{L}(S)$ of words $w(\la)$ for
$\la\in \mathscr{S}_\Z$ such that
${\Wei_{\la}}(S)\neq 0$. We order words in the set
$[1,n]$ lexicographically, with the rule that
$(i_1,\dots, i_{k-1})>(i_1,\dots, i_k)$.
\begin{Definition}
  We call a word {\bf red-good} if it is minimal in lexicographic order amongst $\mathbb{L}(S)$ for some simple $S$.  Since $\mathbb{L}(S)$ is finite, every simple has a unique
good word.
\end{Definition}
Let $\GL$ be the set of words of the form $(k,k-1,\cdots, k-p)$ for
$k\leq n-1$, and $0\leq p <k$, and $\GL'$ be the set of words of the form  $(n,n-1,\cdots, n-p)$ for
$0\leq p<n $; as noted in \cite[\S 6.6]{Lecshuf}, these together form
the good Lyndon words of the $A_{n}$ root system in the obvious order
on the nodes in the Dynkin diagram (which we identify with $[1,n]$).
\notation{$\GL$}{The set of words of the form  $(k,k-1,\cdots, k-p)$ for
$k\leq n-1$, and $0\leq p <k$.}
\notation{$\GL'$}{The set of words of the form  $(n,n-1,\cdots, n-p)$ for
$0\leq p<n $.}
\begin{Definition}
  We say a word $\Bi$ is {\bf goodly} if it is the concatenation $\Bi=a_1\cdots a_p b_1\cdots b_{v_n}$ of words for $a_k\in\GL$, and $b_k\in \GL'$ that satisfies $a_1\leq a_2\leq \cdots \leq a_{p}$ in lexicographic order.
\end{Definition}

For simplicity, assume that the central character $(\la_{n,1},\dots,
\la_{n,v_n})$ is regular, that is, $S_{\mathbf{h}}=\{1\}$.  In this
case, a goodly word can always be realized as $w(\la^{(\Bi)})$ for
a weight $\la^{(\Bi)}$ chosen as follows: pick integers $\mu_1,\dots,
\mu_p$ so that $\mu_1<\cdots <\mu_p< \la_{n,1}<\cdots <\la_{n,v_n}$.
Now, choose the set $\la_{i,*}^{(\Bi)}$ so that $\mu_k$ appears (always
with multiplicity 1) if and
only if $i$
appears as a letter in $a_k$, and $\la_{n,q}$ if and only if $i$
appears as a letter in $b_q$.  This weight depends on the choice of
$\mu_*$, but all these choices are equivalent via Lemma
\ref{lem:Nlam-same}.
\begin{Theorem}\label{th:good-words}
  The map sending a simple \GT module to its red-good word is a bijection,
  and a word is red-good if and only if it is goodly.  
\end{Theorem}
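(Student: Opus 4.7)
The plan is to translate the statement into combinatorics of the dual canonical basis of $U(\mathbf{h})$ via Theorems \ref{th:F-KLR} and \ref{th:dual-canonical1}, and then invoke a tensor-product generalization of Leclerc's shuffle expansion \cite{Lecshuf,KlRa}. By Theorem \ref{th:F-KLR}, simple \GT modules with weights in $\mathscr{S}_\Z$ correspond to simple modules over $e_{\mathsf{S}} \tilde{T}^{\mathbf{h}}_{\Bv} e_{\mathsf{S}}$ modulo the positive-degree center for any complete set $\mathsf{S}$, and $\Wei_\la(S)\neq 0$ is equivalent to the KLRW idempotent $e(\la)$ acting non-trivially on the corresponding simple. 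Because $e(\la)$ depends on $\la$ only through the word $w(\la)$, the set $\mathbb{L}(S)$ is intrinsic to the KLRW-simple; via Theorem \ref{th:dual-canonical1}, $\mathbb{L}(S)$ is precisely the idempotent-support of the corresponding dual canonical basis vector of $U(\mathbf{h})$, which is exactly the setting of Leclerc's theorem.

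Next, I combine Leclerc's classification for $U(\mathfrak{n}_-)$ in type $A_{n-1}$ with its KLR-categorical upgrade \cite{KlRa}: the good Lyndon words in the alphabet $\{1,\dots,n-1\}$ are precisely the elements of $\GL$, and every dual canonical basis vector is labelled by its extremal (red-good) word, which is always a non-decreasing concatenation of good Lyndon words. To handle $U(\mathbf{h})=U(\mathfrak{n}_-)\otimes \bigotimes_k \Sym^{h_k}(Y)$, I extend this classification across the $v_n$ additional tensor factors: since $\omega_{n-1}$ is minuscule for $\mathfrak{sl}_n$, each factor of $Y$ contributes exactly one extra letter equal to $n$, and each red strand is accounted for by precisely one good Lyndon word in $\GL'$. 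Geometrically, the perturbation $-i\epsilon$ in the definition of $x(i,s)$ pins all red strands to the right of the ``free'' black strands, so the red-good word must have the concatenated form $a_1\cdots a_p b_1\cdots b_{v_n}$ with $a_k\in\GL$ non-decreasing in lex order and $b_k\in\GL'$ ordered by red-strand position; this is exactly the definition of goodly. Conversely, for each goodly word $\Bi$, the construction of $\la^{(\Bi)}$ via $\mu_1<\cdots<\mu_p<\la_{n,1}$ produces a weight with $w(\la^{(\Bi)})=\Bi$, and verifying that $\Bi$ is the lex-minimum element of $\mathbb{L}(S^{(\Bi)})$ for the unique simple $S^{(\Bi)}$ attached to $\Bi$ is then a direct translation of the Leclerc minimality property.

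The main obstacle is the clean extension of Leclerc's good-word classification from $U(\mathfrak{n}_-)$ alone to the tensor product $U(\mathbf{h})$. One must check that the shuffle product and the semi-infinite ordering (which places the $\Sym^{h_k}(Y)$-contributions to the right of the $U(\mathfrak{n}_-)$-contribution) interact compatibly with the good Lyndon decomposition, so that the resulting red-good words are exactly the goodly words rather than some larger or smaller combinatorial family. The tools in \cite{KlRa,WebCB} already suffice for this, but one must track carefully the inverted lex convention $(i_1,\dots,i_{k-1})>(i_1,\dots,i_k)$ used here, together with the fact that red strands are pinned to specific positions rather than freely permutable, which is what forces the absence of an ordering condition on the $b_k$. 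The irregular case $S_\mathbf{h}\neq\{1\}$ requires no additional work: the idempotent $e(\la)$ is automatically $S_\mathbf{h}$-invariant, the KLRW block structure incorporates the symmetry by construction, and standard translation-functor arguments reduce to the regular case. Injectivity of the assignment $S\mapsto w(\la)_{\min}$ and surjectivity onto goodly words then follow formally from the dual-canonical indexing.
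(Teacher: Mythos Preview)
Your skeleton is sound and overlaps substantially with the paper's: both reduce to KLRW simples via Theorem~\ref{th:F-KLR} and then invoke the Kleshchev--Ram/Leclerc good-word combinatorics for the $\mathfrak{sl}_n$ KLR algebra. The real divergence is in how the two handle the passage from $U(\mathfrak{n}_-)$ to $U(\mathbf{h})$, i.e.\ how the red strands enter. You propose to work decategorified, via Theorem~\ref{th:dual-canonical1} and a ``tensor-product generalization of Leclerc's shuffle expansion,'' but that generalization is precisely the content to be proven here and is not available off the shelf in \cite{KlRa} or \cite{WebCB}. The paper instead stays categorical and invokes the standardization theorem \cite[Cor.~5.23]{Webmerged}: every simple $\tilde{T}_{\Bv}$-module is the head of a standard module induced from a simple over the pure KLR algebra $\tilde{T}^{\emptyset}$ together with $v_n$ simples over the cyclotomic quotient $T^{\omega_{n-1}}$. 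The first factor contributes $a_1\cdots a_p$ via \cite[Th.~7.2]{KlRa}, and the cyclotomic factors are one-dimensional, contributing the $b_k\in\GL'$. Lex-minimality of the concatenation then follows because the image of $e(\Bi)$ in the standard module generates and hence survives in the simple quotient. Your proposal identifies this extension as ``the main obstacle'' but then asserts rather than supplies it; the standardization is exactly the missing ingredient.

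Two smaller corrections. First, the $-i\epsilon$ perturbation does not ``pin all red strands to the right of the free black strands''; for strands with the same integer value it does the opposite (since $i<n$ for black strands). What forces the factored form $a_1\cdots a_p\,b_1\cdots b_{v_n}$ in the construction of $\la^{(\Bi)}$ is the explicit choice $\mu_1<\cdots<\mu_p<\la_{n,1}<\cdots<\la_{n,v_n}$, not the perturbation. Second, the theorem is stated only under the regularity assumption $S_{\mathbf{h}}=\{1\}$ (see the paragraph preceding it); the singular case is handled separately in Proposition~\ref{th:singular-good-words} and does require additional argument---the zipping of red strands via \cite[Prop.~4.21]{Webmerged} and Lemma~\ref{lem:cyc-quo}---so your claim that it ``requires no additional work'' is not accurate.
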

Note that implicit in the theorem above is that we consider the set of
all red-good words for all different $\Bv$'s, but $\Bv$ is easily
reconstructed from the word, just letting $v_i$ be
the number of times $i$ appears.

\begin{proof}
Note that the words in $\GL$ index cuspidal representations of the KLR
algebra of $\mathfrak{sl}_{n}$ in the sense of Kleshchev-Ram \cite{KlRa}; thus,
concatenations of these words in increasing lexicographic order give
the good words for $\mathfrak{sl}_{n}$, and the lex maximal word in the different
simple representations of the KLR algebra of $\mathfrak{sl}_{n}$  by
\cite[Th. 7.2]{KlRa}.

On the other hand, the words $\GL'$ give the idempotents
corresponding to the different simples over the cyclotomic quotient
$T^{\omega_{n-1}}$, which are all 1-dimensional.  

Thus, given a red-good word $\Bi=a_1\cdots a_p b_1\cdots b_{v_n}$, there is an unique simple $L_0$ over $\tilde{T}^{\emptyset}$ corresponding to $a_1\cdots a_p$ and $v_n$ simple modules $L_1,\dots, L_n$ over
$T^{\omega_{n-1}}$ corresponding to $b_1,\dots, b_n$.  
By \cite[Cor. 5.23]{Webmerged}, 
the standardization $M(\Bi)$ over these simples has a unique simple quotient $L(\Bi)$, and every simple appears this way for a unique goodly word.   Note that the standardization $M(\Bi)$ has the property that if $e(\Bj)M(\Bi)\neq 0$, then $\Bj$ is a shuffle of words who idempotents have non-zero image on $L_0,\dots, L_n$.  Since $n$ is first letter of such a word for $L_i$ with $i>0$, and $n$ does not appear in any word for $L_0$, any such shuffle which is non-trivial will be lex-greater than the trivial shuffle of the same words.  In particular, the lex-minimal word $\Bj$ such that $e(\Bj)M(\Bi)\neq 0$ must be the concatentation of the corresponding lex-minimal words for $L_0,\dots, L_n$.  This is precisely the goodly word $\Bi$.  

The image $e_{\mathbf{S}}L$ gives a simple module over
$F^{(1)}_{\mathsf{S}}$  for any set $\mathsf{S}$ containing the weight
$\la^{(\Bi)}$ and thus a simple \GT -module $S$ by Theorem \ref{th:las-bijection}.  We claim that $\Bi$ is the red-good word for this simple.

For any other word that appears as $w(\la)<\Bi$, we can add
$\la$ to $\mathsf{S}$,  and by the 
discussion above, we have $\Wei_\la(S)=e(\la)L=0$, showing that $\Bi$ is the red-good of word of $S$.    This shows that the map from representations to red-good words is surjective.

Consider any other simple $S'$.  By the discussion above, this comes from a
simple $\tilde{T}_{\Bv}$
representation $L'$, which is the quotient of the standardization of a
different goodly word $\Bi'$.  As
we've already argued, this means that $\Bi'\neq \Bi$ is its red-good
word.  This shows that the map on red-good words is injective and completes the proof.
\end{proof}

\begin{Example}
  For example, the case of integral \GT modules of $\mathfrak{sl}_3$
  corresponds to $\Bv=(1,2,3)$.  Thus, the red-good words are of the form:
  \[ (1|2|2|3|3|3) \qquad  (2,1|2|3|3|3) \]
  \[ (1|2|3,2|3|3)  \qquad (1|2|3|3,2|3) \qquad  (1|2|3|3|3,2)\]
  \[(2,1|3,2|3|3)  \qquad (2,1|3|3,2|3) \qquad  (2,1|3|3|3,2) \]
    \[(2|3,2,1|3|3)  \qquad (2|3|3,2,1|3) \qquad  (2|3|3|3,2,1) \]
  \[(1|3,2|3,2|3)  \qquad (1|3|3,2|3,2) \qquad  (1|3,2|3|3,2) \]
  \[(3,2,1|3,2|3)  \qquad (3|3,2,1|3,2) \qquad  (3,2,1|3|3,2) \]
  \[(3,2|3,2,1|3)  \qquad (3|3,2|3,2,1) \qquad  (3,2|3|3,2,1) \]
We've included vertical bars $|$ between the Lyndon factors of each
word.

In order to construct the actual weights appearing, we choose \[\mu_1=-2<\mu_2=-1<\mu_3=0<\la_{3,1}=1<\la_{3,2}=2<\la_{3,3}=3.\]
We'll represent maximal ideals of the Gelfand-Tsetlin subalgebra using
tableaux, where the entries of the $k$th row from the bottom are the
roots of $\prod_{j=1}^k(u-x_{k,j})\in \Lambda[u]$ reduced modulo the
maximal ideal.  Accordingly, these entries come as an unordered
$k$-tuple, which we write below in decreasing order.

Using this
notation,  the
corresponding weight spaces $\la^{(\Bi)}$ for the words above are shown in \cref{fig:sl3-tableaux}.

\begin{figure}
\noindent\makebox[\textwidth]{\parbox{1.05\textwidth}{
\[\hspace{1.5cm}\tikz{\matrix[row sep=0mm,column sep=0mm,ampersand replacement=\&]{
 \node {$3$}; \& \node {\phantom{$-2$}};\&
 \node {$2$}; \& \node {\phantom{$-2$}};\&
 \node {$1$}; \\
 \& \node {$0$};  \& \node {\phantom{$-2$}};\&
 \node {$-1$}; \& \\
 \node {\phantom{$-2$}};\&\node {\phantom{$-2$}}; \&\node {$-2$}; \node {\phantom{$-2$}};\& \node {\phantom{$-2$}};\&\node {\phantom{$-2$}}; \\
 };
\draw[very thick] (2.4,.9)--(-2.4,.9)--(0,-1.2)--cycle;
}\qquad \tikz{\matrix[row sep=0mm,column sep=0mm,ampersand replacement=\&]{
\node {$3$}; \& \node {\phantom{$-2$}};\&
 \node {$2$}; \& \node {\phantom{$-2$}};\&
 \node {$1$};\\
\& \node {$0$};  \& \node {\phantom{$-2$}};\&
 \node {$-1$}; \&\\
\& \node {\phantom{$-2$}};\&
\node {$-1$}; \& \node {\phantom{$-2$}};\&
\\
}; \draw[very thick] (2.4,.9)--(-2.4,.9)--(0,-1.2)--cycle;
}\]
\[\tikz{\matrix[row sep=0mm,column sep=0mm,ampersand replacement=\&]{
\node {$3$}; \& \node {\phantom{$-2$}};\&
 \node {$2$}; \& \node {\phantom{$-2$}};\&
 \node {$1$};\\
\& \node {$1$};  \& \node {\phantom{$-2$}};\&
 \node {$-1$}; \&\\
\& \node {\phantom{$-2$}};\&
\node {$-2$}; \& \node {\phantom{$-2$}};\&
\\
}; \draw[very thick] (2.4,.9)--(-2.4,.9)--(0,-1.2)--cycle;
}\qquad \tikz{\matrix[row sep=0mm,column sep=0mm,ampersand replacement=\&]{
\node {$3$}; \& \node {\phantom{$-2$}};\&
 \node {$2$}; \& \node {\phantom{$-2$}};\&
 \node {$1$};\\
\& \node {$2$};  \& \node {\phantom{$-2$}};\&
 \node {$-1$}; \&\\
\& \node {\phantom{$-2$}};\&
\node {$-2$}; \& \node {\phantom{$-2$}};\&
\\
}; \draw[very thick] (2.4,.9)--(-2.4,.9)--(0,-1.2)--cycle;
}\qquad \tikz{\matrix[row sep=0mm,column sep=0mm,ampersand replacement=\&]{
\node {$3$}; \& \node {\phantom{$-2$}};\&
 \node {$2$}; \& \node {\phantom{$-2$}};\&
 \node {$1$};\\
\& \node {$3$};  \& \node {\phantom{$-2$}};\&
 \node {$-1$}; \&\\
\& \node {\phantom{$-2$}};\&
\node {$-2$}; \& \node {\phantom{$-2$}};\&
\\
}; \draw[very thick] (2.4,.9)--(-2.4,.9)--(0,-1.2)--cycle;
}\]
\[\tikz{\matrix[row sep=0mm,column sep=0mm,ampersand replacement=\&]{
\node {$3$}; \& \node {\phantom{$-2$}};\&
 \node {$2$}; \& \node {\phantom{$-2$}};\&
 \node {$1$};\\
\& \node {$1$};  \& \node {\phantom{$-2$}};\&
 \node {$-2$}; \&\\
\& \node {\phantom{$-2$}};\&
\node {$-2$}; \& \node {\phantom{$-2$}};\&
\\
}; \draw[very thick] (2.4,.9)--(-2.4,.9)--(0,-1.2)--cycle;
}\qquad \tikz{\matrix[row sep=0mm,column sep=0mm,ampersand replacement=\&]{
\node {$3$}; \& \node {\phantom{$-2$}};\&
 \node {$2$}; \& \node {\phantom{$-2$}};\&
 \node {$1$};\\
\& \node {$2$};  \& \node {\phantom{$-2$}};\&
 \node {$-2$}; \&\\
\& \node {\phantom{$-2$}};\&
\node {$-2$}; \& \node {\phantom{$-2$}};\&
\\
}; \draw[very thick] (2.4,.9)--(-2.4,.9)--(0,-1.2)--cycle;
}\qquad \tikz{\matrix[row sep=0mm,column sep=0mm,ampersand replacement=\&]{
\node {$3$}; \& \node {\phantom{$-2$}};\&
 \node {$2$}; \& \node {\phantom{$-2$}};\&
 \node {$1$};\\
\& \node {$3$};  \& \node {\phantom{$-2$}};\&
 \node {$-2$}; \&\\
\& \node {\phantom{$-2$}};\&
\node {$-2$}; \& \node {\phantom{$-2$}};\&
\\
}; \draw[very thick] (2.4,.9)--(-2.4,.9)--(0,-1.2)--cycle;
}\]
\[\tikz{\matrix[row sep=0mm,column sep=0mm,ampersand replacement=\&]{
\node {$3$}; \& \node {\phantom{$-2$}};\&
 \node {$2$}; \& \node {\phantom{$-2$}};\&
 \node {$1$};\\
\& \node {$2$};  \& \node {\phantom{$-2$}};\&
 \node {$-1$}; \&\\
\& \node {\phantom{$-2$}};\&
\node {$1$}; \& \node {\phantom{$-2$}};\&
\\
}; \draw[very thick] (2.4,.9)--(-2.4,.9)--(0,-1.2)--cycle;
}\qquad \tikz{\matrix[row sep=0mm,column sep=0mm,ampersand replacement=\&]{
\node {$3$}; \& \node {\phantom{$-2$}};\&
 \node {$2$}; \& \node {\phantom{$-2$}};\&
 \node {$1$};\\
\& \node {$2$};  \& \node {\phantom{$-2$}};\&
 \node {$-2$}; \&\\
\& \node {\phantom{$-2$}};\&
\node {$2$}; \& \node {\phantom{$-2$}};\&
\\
}; \draw[very thick] (2.4,.9)--(-2.4,.9)--(0,-1.2)--cycle;
}\qquad \tikz{\matrix[row sep=0mm,column sep=0mm,ampersand replacement=\&]{
\node {$3$}; \& \node {\phantom{$-2$}};\&
 \node {$2$}; \& \node {\phantom{$-2$}};\&
 \node {$1$};\\
\& \node {$3$};  \& \node {\phantom{$-2$}};\&
 \node {$-2$}; \&\\
\& \node {\phantom{$-2$}};\&
\node {$3$}; \& \node {\phantom{$-2$}};\&
\\
}; \draw[very thick] (2.4,.9)--(-2.4,.9)--(0,-1.2)--cycle;
}\]
\[\tikz{\matrix[row sep=0mm,column sep=0mm,ampersand replacement=\&]{
\node {$3$}; \& \node {\phantom{$-2$}};\&
 \node {$2$}; \& \node {\phantom{$-2$}};\&
 \node {$1$};\\
\& \node {$2$};  \& \node {\phantom{$-2$}};\&
 \node {$1$}; \&\\
\& \node {\phantom{$-2$}};\&
\node {$-2$}; \& \node {\phantom{$-2$}};\&
\\
}; \draw[very thick] (2.4,.9)--(-2.4,.9)--(0,-1.2)--cycle;
}\qquad \tikz{\matrix[row sep=0mm,column sep=0mm,ampersand replacement=\&]{
\node {$3$}; \& \node {\phantom{$-2$}};\&
 \node {$2$}; \& \node {\phantom{$-2$}};\&
 \node {$1$};\\
\& \node {$3$};  \& \node {\phantom{$-2$}};\&
 \node {$2$}; \&\\
\& \node {\phantom{$-2$}};\&
\node {$-2$}; \& \node {\phantom{$-2$}};\&
\\
}; \draw[very thick] (2.4,.9)--(-2.4,.9)--(0,-1.2)--cycle;
}\qquad \tikz{\matrix[row sep=0mm,column sep=0mm,ampersand replacement=\&]{
\node {$3$}; \& \node {\phantom{$-2$}};\&
 \node {$2$}; \& \node {\phantom{$-2$}};\&
 \node {$1$};\\
\& \node {$3$};  \& \node {\phantom{$-2$}};\&
 \node {$1$}; \&\\
\& \node {\phantom{$-2$}};\&
\node {$-2$}; \& \node {\phantom{$-2$}};\&
\\
}; \draw[very thick] (2.4,.9)--(-2.4,.9)--(0,-1.2)--cycle;
}\]
\[\tikz{\matrix[row sep=0mm,column sep=0mm,ampersand replacement=\&]{
\node {$3$}; \& \node {\phantom{$-2$}};\&
 \node {$2$}; \& \node {\phantom{$-2$}};\&
 \node {$1$};\\
\& \node {$2$};  \& \node {\phantom{$-2$}};\&
 \node {$1$}; \&\\
\& \node {\phantom{$-2$}};\&
\node {$1$}; \& \node {\phantom{$-2$}};\&
\\
}; \draw[very thick] (2.4,.9)--(-2.4,.9)--(0,-1.2)--cycle;
}\qquad \tikz{\matrix[row sep=0mm,column sep=0mm,ampersand replacement=\&]{
\node {$3$}; \& \node {\phantom{$-2$}};\&
 \node {$2$}; \& \node {\phantom{$-2$}};\&
 \node {$1$};\\
\& \node {$3$};  \& \node {\phantom{$-2$}};\&
 \node {$2$}; \&\\
\& \node {\phantom{$-2$}};\&
\node {$2$}; \& \node {\phantom{$-2$}};\&
\\
}; \draw[very thick] (2.4,.9)--(-2.4,.9)--(0,-1.2)--cycle;
}\qquad \tikz{\matrix[row sep=0mm,column sep=0mm,ampersand replacement=\&]{
\node {$3$}; \& \node {\phantom{$-2$}};\&
 \node {$2$}; \& \node {\phantom{$-2$}};\&
 \node {$1$};\\
\& \node {$3$};  \& \node {\phantom{$-2$}};\&
 \node {$1$}; \&\\
\& \node {\phantom{$-2$}};\&
\node {$1$}; \& \node {\phantom{$-2$}};\&
\\
}; \draw[very thick] (2.4,.9)--(-2.4,.9)--(0,-1.2)--cycle;
}\]
\[\tikz{\matrix[row sep=0mm,column sep=0mm,ampersand replacement=\&]{
\node {$3$}; \& \node {\phantom{$-2$}};\&
 \node {$2$}; \& \node {\phantom{$-2$}};\&
 \node {$1$};\\
\& \node {$2$};  \& \node {\phantom{$-2$}};\&
 \node {$1$}; \&\\
\& \node {\phantom{$-2$}};\&
\node {$2$}; \& \node {\phantom{$-2$}};\&
\\
}; \draw[very thick] (2.4,.9)--(-2.4,.9)--(0,-1.2)--cycle;
}\qquad \tikz{\matrix[row sep=0mm,column sep=0mm,ampersand replacement=\&]{
\node {$3$}; \& \node {\phantom{$-2$}};\&
 \node {$2$}; \& \node {\phantom{$-2$}};\&
 \node {$1$};\\
\& \node {$3$};  \& \node {\phantom{$-2$}};\&
 \node {$2$}; \&\\
\& \node {\phantom{$-2$}};\&
\node {$3$}; \& \node {\phantom{$-2$}};\&
\\
}; \draw[very thick] (2.4,.9)--(-2.4,.9)--(0,-1.2)--cycle;
}\qquad \tikz{\matrix[row sep=0mm,column sep=0mm,ampersand replacement=\&]{
\node {$3$}; \& \node {\phantom{$-2$}};\&
 \node {$2$}; \& \node {\phantom{$-2$}};\&
 \node {$1$};\\
\& \node {$3$};  \& \node {\phantom{$-2$}};\&
 \node {$1$}; \&\\
\& \node {\phantom{$-2$}};\&
\node {$3$}; \& \node {\phantom{$-2$}};\&
\\
}; \draw[very thick] (2.4,.9)--(-2.4,.9)--(0,-1.2)--cycle;
}\]}}
\caption{The tableaux corresponding to the weights $\la^{(\Bi)}$ for the red-good words for the principal block of $\mathfrak{sl}_3$.}	
\label{fig:sl3-tableaux}
\end{figure}

Thus, each generic integral block for $\mathfrak{gl}_3$ has 20 simple
\GT modules.  We discuss the structure of these modules and extend this calculation to other low-rank cases in joint work with Silverthorne \cite{silverthorneGelfandTsetlinModules2024}.  As mentioned in the introduction,
we have done computer computation of the dimensions of the weight spaces
of simples through $\mathfrak{sl}_4$, and of the number of simples in
the principal block up through $\mathfrak{sl}_9$.  
\end{Example}
\subsection{The singular case}

This theorem is a little more awkward to state for the singular case where
$S_{\mathbf{h}}\neq \{1\}$.  To understand this case, it will help to recall a few facts about the cyclotomic quotient corresponding to the highest weight $h\omega_{n-1}$.  
\begin{Lemma}\hfill\label{lem:cyc-quo}
\begin{enumerate}
    \item The algebra $T^{h\omega_{n-1}}_{\Bv}$ is non-zero if and only if $h\geq v_{n-1}\geq \cdots \geq v_1.$
    \item The algebra $T^{h\omega_{n-1}}_{\Bv}$ is Morita equivalent to the cohomology ring of the variety partial flags in $\C^h$ with subspaces of size $v_*$, and thus has a unique simple module $M_{\Bv}$
    \item The image $e(\Bi)M$ is non-zero if and only if $\Bi$ is a shuffle of words from $\GL'$ with their initial $n$ removed.  
\end{enumerate}
\end{Lemma}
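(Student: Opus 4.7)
The plan is to treat all three parts via the categorification of $V(h\omega_{n-1})$ by $\bigoplus_{\Bv} T^{h\omega_{n-1}}_{\Bv}$ established in \cite[Prop. 3.1]{KTWWYO}, supplemented by a geometric realization of the algebra itself.

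For part (1), I would begin by observing that $T^{h\omega_{n-1}}_{\Bv}$ is non-zero if and only if the weight space of $V(h\omega_{n-1}) \cong \Sym^h(Y)$ at $\mu := h\omega_{n-1} - \sum_{i=1}^{n-1} v_i \alpha_i$ is non-zero.  Parameterizing monomials of $\Sym^h(Y)$ by non-negative exponent tuples $(a_1,\ldots,a_n)$ with $\sum_k a_k = h$, a direct weight computation (expressing $\omega_{n-1}$ and $\alpha_i$ in the standard basis) yields the unique solution $a_i = v_i - v_{i-1}$ for $i<n$ and $a_n = h - v_{n-1}$, where $v_0 := 0$.  Non-negativity of each $a_k$ translates exactly to the chain $0 \leq v_1 \leq v_2 \leq \cdots \leq v_{n-1} \leq h$; when this holds the weight space is one-dimensional.

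For part (2), the one-dimensionality of the weight space, combined with the bijection between simples of $T^{h\omega_{n-1}}_{\Bv}$ and crystal basis vectors at weight $\mu$, immediately yields the existence of a unique simple module $M_{\Bv}$.  To identify the algebra itself up to Morita equivalence with the cohomology of the partial flag variety $\mathrm{Fl}(v_1 \leq \cdots \leq v_{n-1} \leq h)$ in $\C^h$, I would apply the convolution realization from \cite[Cor. 4.9]{WebwKLR}: $T^{h\omega_{n-1}}_{\Bv}$ is the Borel--Moore homology of a Steinberg-type variety for the Coulomb branch data associated with a single red strand of weight $h\omega_{n-1}$.  An argument parallel to Theorem \ref{thm:stein-iso} shows that the relevant fixed-point locus degenerates to exactly this partial flag variety, and since it is a single connected smooth component, the self-convolution algebra reduces to its ordinary cohomology.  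This ring is commutative and local, so it is Morita equivalent to itself with a unique simple, as claimed.

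For part (3), the idempotent $e(\Bi)$ acts non-trivially on $M_{\Bv}$ if and only if $\Bi$ records a sequence of lowering operators $\tilde{f}_{i_1}, \tilde{f}_{i_2}, \ldots$ carrying the highest-weight vector $y_n^h$ to the basis monomial at weight $\mu$.  Each $\tilde{f}_i$ decrements an index from $i+1$ to $i$, so producing the monomial with $a_k$ copies of $y_k$ requires, for each $y_k$ with $k<n$, one contiguous decreasing block $(n-1, n-2, \ldots, k)$ of operators, while the $a_n$ copies of $y_n$ contribute no operators.  Distinct blocks may be freely interleaved, so $\Bi$ is exactly a shuffle of these decreasing strings, which are precisely the words of $\GL'$ with their initial letter $n$ removed.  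The main obstacle is the Morita-equivalence claim in part (2): uniqueness of the simple is formal from categorification, but pinning down the equivalence class as partial flag cohomology requires the geometric Coulomb branch input from \cite{WebwKLR}.
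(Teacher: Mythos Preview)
Your approach to parts (1) and (3) is correct and genuinely different from the paper's.  For (1) both you and the paper reduce to a weight-space computation in $\Sym^h(Y)$, so that part coincides.  For (3) the paper invokes \cite[Th.~3.16]{Webthree} to identify $e(\Bi)$ with the pushforward of a quiver flag variety and then checks non-emptiness combinatorially, whereas you argue via categorification: since $M_{\Bv}$ is the unique simple, $e(\Bi)M_{\Bv}\neq 0$ iff $e(\Bi)\neq 0$ in $T^{h\omega_{n-1}}_{\Bv}$, iff the projective $e(\Bi)T^{h\omega_{n-1}}_{\Bv}$ is non-zero, iff its class $F_{i_1}\cdots F_{i_\ell}v_{h\omega_{n-1}}$ is non-zero in $\Sym^h(Y)$.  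Because every weight space of $\Sym^h(Y)$ is one-dimensional, the divided-power/crystal operators and the honest $F_i$'s agree up to non-zero scalar, so your reduction to shuffles of decreasing blocks goes through.  You should make the $K_0$ step explicit rather than asserting the crystal equivalence directly, but the route is sound and arguably more elementary than the paper's sheaf-theoretic argument.

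Part (2), however, has a real gap.  The result \cite[Cor.~4.9]{WebwKLR} realizes the \emph{full} KLRW algebra $\tilde{T}$ as an equivariant Steinberg algebra, and Theorem~\ref{thm:stein-iso} concerns the completion of $F$ at a maximal ideal of $\Lambda$.  Neither of these produces the \emph{cyclotomic quotient} $T^{h\omega_{n-1}}_{\Bv}$: that algebra is obtained by killing a two-sided ideal in $\tilde{T}$ (or in the KLR algebra), not by completing or by restricting to a fixed-point locus.  There is no ``argument parallel to Theorem~\ref{thm:stein-iso}'' that degenerates the Steinberg picture for $\tilde{T}$ to a single partial flag variety; the passage from $\tilde{T}$ to its cyclotomic quotient is of a different nature.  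The paper instead invokes \cite[Th.~3.16]{Webthree}, which directly identifies the (deformed) cyclotomic quotient with a convolution algebra built from the partial flag variety, using that the constant sheaf generates the equivariant derived category there.  You correctly flagged this as the main obstacle, but the tool you propose to resolve it does not apply; you need the input from \cite{Webthree} or an equivalent direct description of the cyclotomic quotient.
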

\begin{proof}\hfill
  \begin{enumerate}
      \item By \cite[Prop. 3.21]{Webmerged}, the Grothendieck group of the category of $T^{h\omega_{n-1}}_{\Bv}$-modules is the $h\omega_{n-1}-v_{n-1}\al_{n-1}-\cdots -v_1\al_1$ weight space of an integral form of the representation $\Sym^{h}(Y)$.  In the usual description of the integral weights of $\mathfrak{sl}_n$ as $n$-tuples of integers modulo the span of $(1,\dots, 1)$, we have the following.
      \begin{align*}
          h\omega_{n-1}-v_{n-1}\al_{n-1}-\cdots -v_1\al_1&=(0,\dots, 0,-h)-(0,\dots, v_{n-1},-v_{n-1})-\cdots -(v_1,-v_1)\\&=(-v_1,-v_2+v-1,\dots, -h+v_{n-1}).
      \end{align*}
      The condition that $h\geq v_{n-1}\geq \cdots \geq v_1$ is equivalent to all the entries of this vector being negative, which indeed describes exactly the weights of $\Sym^{h}(Y)$.
     \item By \cite[Th. 3.18]{websterThreePerspectives2020}, the deformed cyclotomic quotient $T^{h\omega_{n-1}}_{\Bv}$ is Morita equivalent to the $GL_n$ equivariant cohomology ring of this partial flag variety, since the constant sheaf generates the derived category of $GL_n$-equivariant constructible sheaves on this flag variety. Passing to the undeformed quotient $T^{h\omega_{n-1}}_{\Bv}$ kills the equivariant parameters, giving the result by the equivariant formality of the partial flag variety.
     \item The image $e(\Bi)M$ is nonzero if and only if the idempotent $e(\Bi)$ itself is.  Using \cite[Th. 3.18]{websterThreePerspectives2020} again, we see that this is the case if and only if the sheaf $\operatorname{Res}(\mathcal{F}_{\Bi})$ (in the notation of \cite[Th. 3.18]{websterThreePerspectives2020}) is nonzero.  This is the pushforward to the partial flag variety of a particular quiver flag variety.  It's the pushforward of the set of $\sum_i {v_i}$-tuples of flags, where for each $k$, we choose a flag $V_1^{(k)}\subseteq V_2^{(k)}\subset \cdots $ such that $V_i^{(k)}\subset V_i^{(k+1)}$ for all $i$ and $k$, and the dimension of $V_i^{(k)}$ is the number of times $i$ appears in the first $k$ letters of $\Bi$.  We leave it to the reader to check that $\Bi$ being a shuffle of the desired form is equivalent to the existence of such a flag for simple dimension reasons.   \qedhere
  \end{enumerate}
\end{proof}
Note that this implies that there is a bijection between simple modules over $T^{h\omega_{n-1}}_{\Bv}$ and unordered $h$-tuples of words from $\GL'$.

For slightly silly reasons, the red-good
words as we have defined them depend on the choice of $\la_{n,*}$, but
we can still consider goodly words $\Bi=a_1\cdots a_pb_1\cdots
b_{v_n}$ and the associated weight
$\la^{(\Bi)}$.  Note that this now only depends on the choice of
$b_1,\dots, b_{v_n}$ up to permutations under $S_{\mathbf{h}}$.  
\begin{Proposition}\hfill\label{th:singular-good-words}
\begin{enumerate}
    \item 
  For each goodly word $\Bi=a_1\cdots a_pb_1\cdots b_{v_n}$ which is lex
  maximal in its $S_{\mathbf{h}}$-orbit, there is a
  unique simple \GT module $S$ such that ${\Wei_{\la^{(\Bi)}}}(S)\neq
  0$, and ${\Wei_{\la^{(\Bi')}}}(S)=0$ for all $\Bi'$ of the same form
  with $\Bi'<\Bi$, and this gives a complete irredundant list of simple modules in $\GTc(\mathscr{S}_\Z)$ for the corresponding central character.
  \item 
  If $\mathsf{S}$ is a complete set, then $\widehat{F}_{\mathsf{S}}$
  is Morita equivalent to the completion of $ {\tilde{T}^{{\mathbf{h}}}_{\Bv}}$ with respect to its grading for ${\mathbf{h}}=(h_1\omega_{n-1},\dots, h_{\ell}\omega_{n-1})$, and $F^{(1)}_{\mathsf{S}}$ to the quotient of this algebra by positive degree elements of $\Lambda_{\mathscr{S}_\Z}$.
\end{enumerate}
\end{Proposition}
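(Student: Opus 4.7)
The plan is to prove part (2) as a singular-character extension of Theorem \ref{th:F-KLR}, and then derive part (1) from the classification of simples over $\tilde{T}^{\mathbf{h}}_{\Bv}$ via standardizations together with Lemma \ref{lem:cyc-quo}.

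For part (2), I would combine Theorem \ref{th:F-KLR} with the translation-functor observation immediately following it: specializing the red-strand positions $(\la_{n,1},\ldots,\la_{n,v_n})$ to a tuple with stabilizer $S_{\mathbf{h}}\subset S_{v_n}$ causes the corresponding red strands of $\tilde{T}_{\Bv}$ to collide, and the resulting idempotent truncation is Morita equivalent to the KLRW algebra $\tilde{T}^{\mathbf{h}}_{\Bv}$ built with red strands of weight $h_k\omega_{n-1}$. This can be realized geometrically by running the proof of Theorem \ref{thm:stein-iso} on the Coulomb branch of $(G,N)$ with the variables $x_{n,r}$ kept formal and equivariance under $J=Q/G$ retained, then specializing at the end; equivalently, it can be extracted from the algebraic argument in \cite[Prop. 3.4]{SilverthorneW}. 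Taking the quotient by positive-degree elements of $\mathscr{Z}(\mathscr{S}_\Z)$ then yields the stated identification of $F^{(1)}_{\mathsf{S}}$.

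Granting part (2) together with Theorem \ref{th:las-bijection}, part (1) reduces to the classification of simple modules over $\tilde{T}^{\mathbf{h}}_{\Bv}$. By \cite[Cor. 5.23]{Webmerged}, every simple arises as the unique simple quotient of a standardization built from (a) a simple module over the KLR algebra $\tilde{T}^{\emptyset}$, indexed by its lex-minimal word, which by \cite[Th. 7.2]{KlRa} is a concatenation $a_1\cdots a_p$ of words from $\GL$ in weakly increasing lex order; and (b) a simple module over each cyclotomic quotient $T^{h_k\omega_{n-1}}$. By Lemma \ref{lem:cyc-quo}, each such cyclotomic simple corresponds to an unordered $h_k$-tuple of words from $\GL'$; representing it by the lex-maximal arrangement gives a block $b_{s_k+1}\cdots b_{s_k+h_k}$, and concatenating across all blocks yields a goodly word $\Bi$ lex maximal in its $S_{\mathbf{h}}$-orbit. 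The argument of Theorem \ref{th:good-words} then shows that $\Bi$ is the red-good word of the corresponding simple \GT module $S$: the idempotent $e(\Bi)$ generates the image of the standard module inside its simple quotient, while $e(\Bi')$ kills the standardization for any goodly $\Bi'<\Bi$ of the same form, so $\Wei_{\la^{(\Bi)}}(S)\neq 0$ while $\Wei_{\la^{(\Bi')}}(S)=0$.

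The main obstacle is keeping track of the $S_{\mathbf{h}}$-orbit parameterization and matching it correctly with the indexing of simples over each $T^{h_k\omega_{n-1}}$. The subtlety is that $\la^{(\Bi)}$ depends on $\Bi$ only up to this orbit action, which is exactly why the theorem indexes simples by orbits rather than individual words; in the regular case $\mathbf{h}=(1,\ldots,1)$, each cyclotomic quotient $T^{\omega_{n-1}}$ has $1$-dimensional simples labeled by single words from $\GL'$, the orbit condition is vacuous, and we recover Theorem \ref{th:good-words}.
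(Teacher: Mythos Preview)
Your outline follows essentially the same route as the paper: prove (2) first via Theorem~\ref{th:F-KLR} and the ``zipping'' of red strands, then deduce (1) from the standardization classification \cite[Cor.~5.23]{Webmerged} together with Lemma~\ref{lem:cyc-quo}.

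The one point where you are too quick is the Morita equivalence in (2). You assert that collapsing red strands yields a Morita equivalence between $e_{\mathsf{S}}\tilde{T}_{\Bv}e_{\mathsf{S}}$ and $\tilde{T}^{\mathbf{h}}_{\Bv}$, citing translation functors or an external reference. The paper does not treat this as formal: it first uses \cite[Prop.~4.21]{Webmerged} to see that each $e(\la)$ lies in the subalgebra $\tilde{T}^{\mathbf{h}}_{\Bv}\subset \tilde{T}_{\Bv}$, and then checks the Morita condition directly by showing that no simple $\tilde{T}^{\mathbf{h}}_{\Bv}$-module is killed by $e_{\mathsf{S}}$. That check is exactly the standardization/goodly-word computation you describe for part (1): for each simple $L$ one constructs the goodly word $\Bi_L$ and shows $e(\la^{(\Bi_L)})L\neq 0$. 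So in the paper the two parts are more intertwined than in your outline---the analysis you postpone to (1) is what actually establishes (2). There is also a small subtlety you skate past: the word $w(\la^{(\Bi)})$ one reads off the weight is not literally $\Bi$ in the singular case (the letters in each $h_k$-block come out sorted), and one needs Lemma~\ref{lem:cyc-quo}(3) to see that this shuffled idempotent still acts nonzero on the cyclotomic simple. With those points filled in, your argument matches the paper's.
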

\begin{proof} We'll actually prove part (2) first.  
By Theorem \ref{th:F-KLR}, it's enough to show that the ring $e_{\mathsf{S}}\tilde{T}_{\Bv}e_{\mathsf{S}}$ has the desired Morita equivalence.

  Given $\la\in \Spec(\Lambda)$, since we will never have a black strand between the red strands that correspond to $\la_{n,k}=\la_{n,k+1}$, we have that $e(\la)\in
  \tilde{T}^{{\mathbf{h}}}_{\Bv}$ embedded as in \cite[Prop. 4.21]{Webmerged}
  by ``zipping'' the red strands. Thus,  $e_{\mathsf{S}}$ and $e_{\mathsf{S}}\tilde{T}_{\Bv}e_{\mathsf{S}}$ will lie in this subalgebra.  
  
  By standard results of Morita theory, it's enough to check that no simple module over $\tilde{T}^{{\mathbf{h}}}_{\Bv}$ is killed by $e_{\mathsf{S}}$. By \cite[Cor. 5.23]{Webmerged}, every such simple is obtained by standardization of a module $L_{0}$ over the usual KLR algebra $\tilde{T}^{\emptyset}$, and then of modules $L_i$ over the cyclotomic quotient corresponding to $h_i\omega_{n-1}$.  Of course, by Lemma \ref{lem:cyc-quo}, $L_i$ is uniquely determined by an unordered $h$-tuple of words from $\GL'$; we can uniquely construct a word from these by taking the lex-maximal element of the set of such concatenations.  Construct a goodly word $\Bi_L$ by concatenating the good word corresponding to $L_0$, with the words just attached to $L_1,L_2,\dots,L_{\ell}$.  
  
  This word has a corresponding weight $\la_{\Bi_L}$;  note that turning this back into a word via the usual rule, we don't get $\Bi$ back, but instead, for each $h_i$-tuple, we get the word sorted in descending order $(n,\dots,n,n-1,\dots, n-1,\dots, 1,\dots, 1)$, since all the variables $\la_{*,*}$ assigned to these black strands have the same longitude.  This is the red-good word for the simple, following the definition precisely.  Since this is a shuffle of the $h_i$-tuple of words in $\GL'$, the corresponding idempotent has nonzero image on the simple $L_i$.  This means that $e(\la_{\Bi_L})$ has non-zero image on the standard module, and its image contains a pure tensor of non-zero vectors in the simples $L_0,\dots, L_{\ell}$, and thus generates the standard module.  This shows that $e(\la_{\Bi_L})$ also has non-zero image in $L$.  This shows the Morita equivalence, since no simple is killed by $e_{\mathsf{S}}$; thus (2) holds.
  
  Now, let us show (1).  Let $S$ be the GT module corresponding to $L$.  We have already noted that ${\Wei_{\la^{(\Bi)}}}(S)\neq 0$.  We wish to show that ${\Wei_{\la^{(\Bi')}}}(S)=0$ for all $\Bi'$ of the same form
  with $\Bi'<\Bi$.  By construction, the word $\Bi'$ must be a shuffle of the words $a_1,\dots, a_p,b_1,\dots, b_q$ without crossing any red strands.   Consider the first letter in $\Bi'$ which is different from the corresponding letter in $\Bi$.  This must be the first letter of one of the words $a_i$ or $b_i$.  If it comes from one of the words $b_i\in \GL'$, then it is $n$ in $\Bi'$, so we must have $\Bi'>\Bi$.  If it is from one of the words $a_j$, then simply deleting the letter from $b_*$ gives a shuffle of the words $a_1,\dots, a_p$ which is lex-lower.  This is impossible by \cite[Lem. 15]{Lecshuf}. 
\end{proof}

\section{On a conjecture of Mazorchuk}
We say that a maximal ideal $\Gamma\subset U(\mathfrak{gl}_n)$ is a
{\bf Gelfand-Tsetlin pattern} if all $\la_{i,k}$ lie in the same coset
of $\Z$ in $\C$, and the order $\prec$ satisfies $(i,s) \prec (i-1,s)
\prec (i,s+1)$ for $i=2,\dots, n$ and $s=1,\dots, i-1$.  As discussed previously, if a representation of
$\mathfrak{gl}_n$ is finite dimensional, then its spectrum consists of precisely the 
Gelfand-Tsetlin patterns with fixed $\lambda_{n,*}$.  This result is implicit in the original work of Gelfand and Tsetlin
\cite{gelfandFinitedimensionalRepresentations1950} and was developed further by Zhelobenko \cite[Th. 13.5]{zhelobenkoClassicalGroups1962}; see \cite[Th. 2.20]{molevGelfandTsetlin2006} for a more modern treatment.

Mazorchuk communicated to us a conjecture which would be a strong converse to this result:
\begin{Conjecture}\label{conj:Mazorchuk}
  If $S$ is a simple $U(\mathfrak{gl}_n)$ module, and $\Wei_{\gamma}(S)\neq 0$ for $\gamma$ a Gelfand-Tsetlin pattern, then $S$ is finite-dimensional.  That is, for any $\gamma\in \MaxSpec(\Gamma)$, then either:
  \begin{enumerate}
      \item $\Wei_\gamma(S)=0$ for all infinite-dimensional simple
        modules $S$ and $\Wei_\gamma(S')\neq 0$ for some finite-dimensional  simple module $S'$ (i.e. $\gamma$ is a Gelfand-Tsetlin pattern) or
     \item $\Wei_\gamma(S')=0$ for all finite-dimensional simple
       modules $S$ and $\Wei_\gamma(S)\neq 0$ for some infinite-dimensional  simple module $S$ (i.e. $\gamma$ is not a Gelfand-Tsetlin pattern).
  \end{enumerate}
\end{Conjecture}
Embarrassingly, we at one point claimed to have a proof of this fact.  Unfortunately, this proof was incorrect and a more careful computer search showed that:
\begin{Theorem}\label{th:Mazorchuk}
	Conjecture \ref{conj:Mazorchuk} holds for $n\leq 5$ and is false for $n\geq 6$. That is, a Gelfand-Tsetlin pattern has an infinite-dimensional module in its fiber if and only if $n\geq 6$.  
\end{Theorem}

The key to this proof is studying the algebra $U^{(1)}_\gamma=F^{(1)}_\la$ for $\gamma$ a Gelfand-Tsetlin pattern, which we can write as a quotient of $e(\la)T^{n\omega_{n-1}}e(\la)$ by \cref{th:F-KLR}. Since simple modules in the fiber are in bijection with simple $U^{(1)}_\gamma$-modules, and exactly one of these modules is finite-dimensional, the conjecture above holds if and only if $U^{(1)}_\gamma$ has only one simple module.
\begin{proof}
\mybox{$n\leq 5$:}  In this case, we wish to prove that there are no other simple modules with $\gamma$ in their support,  that is, that $U^{(1)}_\gamma$ has a unique simple module.   This will hold if the algebra has a non-negative grading, with degree 0 piece spanned by the scalars. By Proposition \ref{prop:generator-degree}, as a module over the positively graded coinvariant algebra, the algebra $U^{(1)}_\gamma$ has a set of free generators indexed by the elements of $W$. 

 Thus, we need only confirm that any nontrivial element $w\in W=S_{n-1}\times \cdots \times S_{2}\times S_1$ gives a generator of positive degree.   In the cases where $n\leq 4$, this is easy to do by hand.  
For example, if $n=4$, there are 12 elements of $S_3\times S_2\times S_1$.  The resulting diagrams have degree 0, 2 or 4.  The only diagram with degree 0 is:\begin{equation*}
	\tikz{ \node at (-3.2,0){\tikz[very thick,xscale=1.7]{
 \draw[wei] (-1.5,-1) -- (-1.5,1);
 \draw (-1.25,-1) -- node[below, at start]{$3$} (-1.25,1);
 \draw[wei] (-1,-1) -- (-1,1);
 \draw (-.75,-1) -- node[below, at start]{$2$} (-.75,1);
 \draw (-.5,-1) -- node[below, at start]{$3$} (-.5,1);
 \draw (-.25,-1) -- node[below, at start]{$1$} (-.25,1);
 \draw[wei] (0,-1) -- (0,1);
 \draw (.25,-1) -- node[below, at start]{$2$} (.25,1);
 \draw (.5,-1) -- node[below, at start]{$3$} (.5,1);
 \draw[wei] (.75,-1) -- (.75,1);
 }};
 }
\end{equation*}
The diagrams of degree 2 are:
\begin{equation*}
	\tikz{ \node at (-3.2,0){\tikz[very thick,xscale=1.7]{
 \draw[wei] (-1.5,-1) -- (-1.5,1);
 \draw (-1.25,-1) -- node[below, at start]{$3$} (-.5,1);
 \draw (-.75,-1) -- node[below, at start]{$2$} (-.75,1);
 \draw (-.5,-1) -- node[below, at start]{$3$} (-1.25,1);
 \draw (-.25,-1) -- node[below, at start]{$1$} (-.25,1);
 \draw[wei] (0,-1) -- (0,1);
 \draw (.25,-1) -- node[below, at start]{$2$} (.25,1);
 \draw (.5,-1) -- node[below, at start]{$3$} (.5,1);
 \draw[wei] (.75,-1) -- (.75,1);
 \draw[wei] (-1,-1) -- (-1,1);
 }};
 }\qquad 	\tikz{ \node at (-3.2,0){\tikz[very thick,xscale=1.7]{
 \draw[wei] (-1.5,-1) -- (-1.5,1);
 \draw (-1.25,-1) -- node[below, at start]{$3$} (-1.25,1);
 \draw[wei] (-1,-1) -- (-1,1);
 \draw (-.75,-1) -- node[below, at start]{$2$} (-.75,1);
 \draw (-.5,-1) to[out=70,in=-130] node[below, at start]{$3$} (.5,1);
 \draw (-.25,-1) -- node[below, at start]{$1$} (-.25,1);
 \draw (.25,-1) -- node[below, at start]{$2$} (.25,1);
 \draw (.5,-1) to[out=130,in=-70] node[below, at start]{$3$} (-.5,1);
 \draw[wei] (0,-1) -- (0,1);
 \draw[wei] (.75,-1) -- (.75,1);
 }};
 }
 \qquad 	\tikz{ \node at (-3.2,0){\tikz[very thick,xscale=1.7]{
 \draw[wei] (-1.5,-1) -- (-1.5,1);
 \draw (-1.25,-1) -- node[below, at start]{$3$} (-1.25,1);
 \draw[wei] (-1,-1) -- (-1,1);
 \draw (-.75,-1) to[out=70,in=-130] node[below, at start]{$2$} (.25,1);
 \draw (-.5,-1) -- node[below, at start]{$3$} (-.5,1);
 \draw (-.25,-1) -- node[below, at start]{$1$} (-.25,1);
 \draw (.25,-1) to[out=130,in=-70] node[below, at start]{$2$} (-.75,1);
 \draw (.5,-1) -- node[below, at start]{$3$} (.5,1);
 \draw[wei] (0,-1) -- (0,1);
 \draw[wei] (.75,-1) -- (.75,1);
 }};
 }
\end{equation*}
\begin{equation*}
	\tikz{ \node at (-3.2,0){\tikz[very thick,xscale=1.7]{
 \draw[wei] (-1.5,-1) -- (-1.5,1);
 \draw (-1.25,-1) -- node[below, at start]{$3$} (-.5,1);
 \draw (-.75,-1) to[out=70,in=-130] node[below, at start]{$2$} (.25,1);
 \draw (-.5,-1) -- node[below, at start]{$3$} (-1.25,1);
 \draw (-.25,-1) -- node[below, at start]{$1$} (-.25,1);
 \draw[wei] (0,-1) -- (0,1);
 \draw (.25,-1) to[out=130,in=-70] node[below, at start]{$2$} (-.75,1);
 \draw (.5,-1) -- node[below, at start]{$3$} (.5,1);
 \draw[wei] (.75,-1) -- (.75,1);
 \draw[wei] (-1,-1) -- (-1,1);
 }};
 }\qquad 	\tikz{ \node at (-3.2,0){\tikz[very thick,xscale=1.7]{
 \draw[wei] (-1.5,-1) -- (-1.5,1);
 \draw (-1.25,-1) -- node[below, at start]{$3$} (-1.25,1);
 \draw[wei] (-1,-1) -- (-1,1);
 \draw (-.75,-1) to[out=90,in=-130] node[below, at start]{$2$} (.25,1);
 \draw (-.5,-1) to[out=90,in=-130] node[below, at start]{$3$} (.5,1);
 \draw (-.25,-1) to[out=80,in=-80] node[below, at start]{$1$} (-.25,1);
 \draw (.25,-1) to[out=130,in=-90] node[below, at start]{$2$} (-.75,1);
 \draw (.5,-1) to[out=130,in=-90] node[below, at start]{$3$} (-.5,1);
 \draw[wei] (0,-1) -- (0,1);
 \draw[wei] (.75,-1) -- (.75,1);
 }};
 }\qquad 	\tikz{ \node at (-3.2,0){\tikz[very thick,xscale=1.7]{
 \draw[wei] (-1.5,-1) -- (-1.5,1);
 \draw (.5,-1) -- node[below, at start]{$3$} (-1.25,1);
 \draw[wei] (-1,-1) -- (-1,1);
 \draw (-.75,-1) to[out=90,in=-90] node[below, at start]{$2$} (-.75,1);
 \draw (-1.25,-1) to node[below, at start]{$3$} (.5,1);
 \draw (-.25,-1) to node[below, at start]{$1$} (-.25,1);
 \draw (.25,-1) to[out=90,in=-90] node[below, at start]{$2$} (.25,1);
 \draw (-.5,-1) to node[below, at start]{$3$} (-.5,1);
 \draw[wei] (0,-1) -- (0,1);
 \draw[wei] (.75,-1) -- (.75,1);
 }};
 }
\end{equation*}
The diagrams of degree 4 are:
\begin{equation*}
	\tikz{ \node at (-3.2,0){\tikz[very thick,xscale=1.7]{
 \draw[wei] (-1.5,-1) -- (-1.5,1);
 \draw (-1.25,-1) -- node[below, at start]{$3$} (.5,1);
 \draw (-.75,-1) -- node[below, at start]{$2$} (-.75,1);
 \draw (-.5,-1) to[out=100,in=-60] node[below, at start]{$3$} (-1.25,1);
 \draw (-.25,-1) -- node[below, at start]{$1$} (-.25,1);
 \draw[wei] (0,-1) -- (0,1);
 \draw (.25,-1) -- node[below, at start]{$2$} (.25,1);
 \draw (.5,-1) -- node[below, at start]{$3$} (-.5,1);
 \draw[wei] (.75,-1) -- (.75,1);
 \draw[wei] (-1,-1) -- (-1,1);
 }};
 }\qquad 	\tikz{ \node at (-3.2,0){\tikz[very thick,xscale=1.7]{
 \draw[wei] (-1.5,-1) -- (-1.5,1);
 \draw (-1.25,-1) -- node[below, at start]{$3$} (-.5,1);
 \draw (-.75,-1) -- node[below, at start]{$2$} (-.75,1);
 \draw (-.5,-1) to[out=70,in=-110] node[below, at start]{$3$} (.5,1);
 \draw (-.25,-1) -- node[below, at start]{$1$} (-.25,1);
 \draw (.25,-1) -- node[below, at start]{$2$} (.25,1);
 \draw (.5,-1) to[out=120,in=-40] node[below, at start]{$3$} (-1.25,1);
 \draw[wei] (-1,-1) -- (-1,1);
 \draw[wei] (0,-1) -- (0,1);
 \draw[wei] (.75,-1) -- (.75,1);
 }};
 }
 \qquad 		\tikz{ \node at (-3.2,0){\tikz[very thick,xscale=1.7]{
 \draw[wei] (-1.5,-1) -- (-1.5,1);
 \draw (-1.25,-1) to[out=60,in=-150] node[below, at start]{$3$} (.5,1);
 \draw (-.75,-1) -- node[below, at start]{$2$} (.25,1);
 \draw (-.5,-1) to[out=100,in=-60] node[below, at start]{$3$} (-1.25,1);
 \draw (-.25,-1) to[out=95,in=-95] node[below, at start]{$1$} (-.25,1);
 \draw[wei] (0,-1) -- (0,1);
 \draw (.25,-1) -- node[below, at start]{$2$} (-.75,1);
 \draw (.5,-1) -- node[below, at start]{$3$} (-.5,1);
 \draw[wei] (.75,-1) -- (.75,1);
 \draw[wei] (-1,-1) -- (-1,1);
 }};
 }
\end{equation*}

\begin{equation*}
\tikz{ \node at (-3.2,0){\tikz[very thick,xscale=1.7]{
 \draw[wei] (-1.5,-1) -- (-1.5,1);
 \draw (-1.25,-1) -- node[below, at start]{$3$} (-.5,1);
 \draw (-.75,-1) -- node[below, at start]{$2$} (.25,1);
 \draw (-.5,-1) to[out=70,in=-110] node[below, at start]{$3$} (.5,1);
 \draw (-.25,-1) to[out=95,in=-95] node[below, at start]{$1$} (-.25,1);
 \draw (.25,-1) -- node[below, at start]{$2$} (-.75,1);
 \draw (.5,-1) to[out=110,in=-70] node[below, at start]{$3$} (-1.25,1);
 \draw[wei] (-1,-1) -- (-1,1);
 \draw[wei] (0,-1) -- (0,1);
 \draw[wei] (.75,-1) -- (.75,1);
 }};
 }\qquad 	\tikz{ \node at (-3.2,0){\tikz[very thick,xscale=1.7]{
 \draw[wei] (-1.5,-1) -- (-1.5,1);
 \draw (.5,-1) -- node[below, at start]{$3$} (-1.25,1);
 \draw[wei] (-1,-1) -- (-1,1);
 \draw (-.75,-1) to[out=90,in=-130] node[below, at start]{$2$} (.25,1);
 \draw (-1.25,-1) tonode[below, at start]{$3$} (.5,1);
 \draw (-.25,-1) to node[below, at start]{$1$} (-.25,1);
 \draw (.25,-1) to[out=100,in=-55] node[below, at start]{$2$} (-.75,1);
 \draw (-.5,-1) to node[below, at start]{$3$} (-.5,1);
 \draw[wei] (0,-1) -- (0,1);
 \draw[wei] (.75,-1) -- (.75,1);
 }};
 }
\end{equation*}
For $n=5$, there are 288 elements of $S_4\times S_3\times S_2\times S_1$, so this is impractical to check by hand.  
We have checked this by computer calculation using SageMath; the code we used can be found on the \href{https://github.com/bwebste/gelfand-tsetlin-public/}{public GitHub repository} for this paper. The Jupyter notebook \href{https://github.com/bwebste/gelfand-tsetlin-public/blob/main/ComputeDegrees.ipynb}{\tt ComputeDegrees.ipynb} will guide you through the required computations.  The identity is the only element of this group giving a generator of degree 1, while there are 29 of degree 2 and degree 8, 114 of degree 4 and of degree 6, and 1 of degree 10.

This confirms the conjecture for $n\leq 5$.  In contrast, for $n=6$, we find that the number of elements of degree $0, 2, 4, 6, 8, 10, 12, 14, 16, 18$ is 2,  222, 2406, 8598, 12418, 8122, 2434, 338, and 20.  The additional element of degree 0 in this case is the first hint that the conjecture will fail for higher values of $n$.  

\mybox{$n\geq 6$:} To see that this
 conjecture fails in the case where $n\geq 6$, we need only find one $\gamma$ where it fails for each $n$.  Of course, since all Gelfand-Tsetlin patterns are in the same clan, and all choices of central character where Gelfand-Tsetlin patterns exist are equivalent by translation functors, the answer will be the same for all Gelfand-Tsetlin patterns for a fixed $n$.  
 
 First, consider the case $n=6$.  For concreteness, we choose one where the corresponding word is 
 \[\Bj=(6,5,4,3,2,1,6,5,4,3,2,6,5,4,3,6,5,4,
 6,5,6).\]
 
Consider the element $\tau=((24),(12)(34),(13),1,1)\in S_5\times S_4\times S_3\times S_2\times S_1$.  The corresponding diagram $D$ is shown below:
\notation{$D$}{The KLRW diagram in $U^{(1)}_\gamma$ induced by the permutation $\tau=((24),(12)(34),(13),1,1)$.}
\begin{equation*}
\begin{tikzpicture}[scale=2,thick]
\foreach \X [count=\Y] in {red,blue,orange, purple,green,yellow}
{
  \ifnum\Y=1
    \foreach \Z in {1,7,12,16,19,21}
      \draw[\X] (\Z/3,0) -- node [below, at start]{$6$} (\Z/3,1);
  \fi
  \ifnum\Y=2
    \draw[\X] (8/3,0) -- node [below, at start]{$5$}(17/3,1);
      \draw[\X] (17/3,0) -- node [below, at start]{$5$}(8/3,1);  
    \foreach \Z in {2,13,20}
      \draw[\X] (\Z/3,0) -- node [below, at start]{$5$}(\Z/3,1);
  \fi
  \ifnum\Y=3

      \draw[\X] (3/3,0) to[out=35,in=-155] node [below, at start]{$4$}(9/3,1);
      \draw[\X] (9/3,0) to[out=155,in=-35] node [below, at start]{$4$}(3/3,1);
      \draw[\X] (14/3,0) to[out=35,in=-155] node [below, at start]{$4$}(18/3,1);
      \draw[\X] (18/3,0) to[out=155,in=-35] node [below, at start]{$4$}(14/3,1);
  \fi
  \ifnum\Y=4  \draw[\X] (4/3,0) to[out=30,in=-155] node [below, at start]{$3$}(15/3,1);
    \draw[\X] (15/3,0)  to[out=155,in=-30] node [below, at start]{$3$}(4/3,1);
    \draw[\X] (10/3,0) -- node [below, at start]{$3$}(10/3,1);
  \fi
  \ifnum\Y=5
    \foreach \Z in {5,11}
      \draw[\X] (\Z/3,0) -- node [below, at start]{$2$}(\Z/3,1);
  \fi
  \ifnum\Y=6
    \foreach \Z in {6}
      \draw[\X] (\Z/3,0) -- node [below, at start]{$1$}(\Z/3,1);
  \fi
}
\end{tikzpicture}
\end{equation*}
The degree of $D$ is 0: there are 8 crossings of strands with the same label, and 4 crossings of strands with adjacent labels for each of the pairs $6/5,5/4,4/3,$ and $3/2$.  We have verified by computer that it is the unique non-trivial diagram with degree 0 in the case $n=6$.  

In order to understand this case, the key calculation is to find $D^2$, that is:  
\begin{equation*}
	\begin{tikzpicture}[xscale=2, thick]
\foreach \X [count=\Y] in {red,blue,orange, purple,green,yellow}
{
  \ifnum\Y=1
    \foreach \Z in {1,7,12,16,19,21}
      \draw[\X] (\Z/3,0) -- node [below, at start]{$6$} (\Z/3,3);
  \fi
  \ifnum\Y=2
    \draw[\X] (8/3,0) to[out=35,in=-90] node [below, at start]{$5$}(17/3,1.5)  to[out=90,in=-35] (8/3,3) ;
      \draw[\X] (17/3,0) to[out=145,in=-90] node [below, at start]{$5$}(8/3,1.5) to[out=90,in=-145] (17/3,3);  
    \foreach \Z in {2,13,20}
      \draw[\X] (\Z/3,0) -- node [below, at start]{$5$}(\Z/3,3);
  \fi
  \ifnum\Y=3
      \draw[\X] (3/3,0) to[out=35,in=-90] node [below, at start]{$4$}(9/3,1.5) to[out=90,in=-35] (3/3,3);
      \draw[\X] (9/3,0) to[out=155,in=-90] node [below, at start]{$4$}(3/3,1.5)to[out=90,in=-155](9/3,3);
      \draw[\X] (14/3,0) to[out=35,in=-90] node [below, at start]{$4$}(18/3,1.5)to[out=90,in=-35] (14/3,3);
      \draw[\X] (18/3,0) to[out=155,in=-90] node [below, at start]{$4$}(14/3,1.5)to[out=90,in=-155](18/3,3);
  \fi
  \ifnum\Y=4  \draw[\X] (4/3,0) to[out=30,in=-90] node [below, at start]{$3$}(15/3,1.5) to[out=90,in=-35] (4/3,3);
    \draw[\X] (15/3,0)  to[out=155,in=-90] node [below, at start]{$3$}(4/3,1.5)to[out=90,in=-155](15/3,3);
    \draw[\X] (10/3,0) -- node [below, at start]{$3$}(10/3,3);
  \fi
  \ifnum\Y=5
    \foreach \Z in {5,11}
      \draw[\X] (\Z/3,0) -- node [below, at start]{$2$}(\Z/3,3);
  \fi
  \ifnum\Y=6
    \foreach \Z in {6}
      \draw[\X] (\Z/3,0) -- node [below, at start]{$1$}(\Z/3,3);
  \fi
}
\end{tikzpicture}
\end{equation*}
Of course, this is a complex calculation. It will be easier if we consider the action of $D$ on the polynomial representation $\mathcal{P}$ of the KLRW algebra.
It is simpler to use slightly different notation from earlier appearances of this representation, such as \cite{KLII,Rou2KM,silverthorneGelfandTsetlinModules2024}, so let us introduce this faithful module over $\tilde{T}^{n\omega_{n-1}}_{\Bv}$. We are following the conventions of \cite[\S 2.2.1]{silverthorneGelfandTsetlinModules2024} with changed notation---instead of having a single alphabet of variables, we separate them according to the labels on the corresponding strands.  Let \[S=\Bbbk[z_{i,j}]_{(i,j)\in \Omega}\qquad \Omega=\{(i,j)\mid 1\leq j\leq i\leq 6\}.\] 
For a permutation $\sigma$, we let $\sigma^{(k)}$ denote the action of this permutation on the variables $z_{k,*}$.
We define an action of the KLRW algebra $\tilde{T}^{n\omega_{n-1}}_{\Bv}$ on the sum $\mathcal{P}=\bigoplus_{\Bi} S\cdot 1_{\Bi}$ where 
\begin{itemize}
    \item $e({\Bi})$ acts by projection to the corresponding summand $S\cdot 1_{\Bi}$,
    \item a dot on the $k$th strand from the left with label $i$ acts by multiplication by $z_{i,k}$,
    \item a crossing of the $k$th and $k+1$st strands with $\Bi$ at the bottom and $\Bi'$ at top acts by
    \begin{itemize}
        \item  If $i_k=i_{k+1}$ and these are the $r$th and $(r+1)$st strands with this label, the divided difference operator \[f 1_{\Bi}\mapsto \frac{f-(r,r+1)^{(i_k)}\cdot f}{z_{i_k,r}-z_{i_k,r+1}} 1_{\Bi'}.\]
        \item If $i_k+1=i_{k+1}$ and these are the $r$th and $s$th strands with these labels, the multiplication \[f 1_{\Bi}\mapsto (z_{i_{k},r}-z_{i_{k+1},s}) f 1_{\Bi'}.\]    
        \item Otherwise, the identity map \[f 1_{\Bi}\mapsto f 1_{\Bi'}.\]
    \end{itemize}
\end{itemize}
By \cite[Prop. 2.20]{silverthorneGelfandTsetlinModules2024}, this representation is faithful.  
The action of the diagram $D$ in this representation is given by $p\partial_{\tau}q$ where
\begin{align*}
	p &= (z_{2,1}-z_{3,1})(z_{4,2}-z_{5,2})(z_{3,3}-z_{4,3})(z_{5,4}-z_{6,4})\\
	q &= (z_{5,2}-z_{6,3})(z_{3,1}-z_{4,2})(z_{4,3}-z_{5,4})(z_{2,2}-z_{3,3})\\
	\partial_{\tau} &= \partial^{(5)}_{(24)}\partial^{(4)}_{(12)(34)}\partial^{(3)}_{(13)}
\end{align*}
where $\partial^{(j)}_{\sigma}$ is the divided difference operator for this element of the symmetric group $S_j$.  Since we are only considering the longest elements in different parabolic subgroups, we have
\begin{align*}
 \partial^{(5)}_{(24)}&= \frac{1-(23)^{(5)}-(34)^{(5)}-(24)^{(5)} +(234)^{(5)}+(243)^{(5)}}{\prod_{2\leq r<s\leq 4}(z_{j,r}-z_{j,s})}   \\
  \partial^{(4)}_{(12)(34)}&=\frac{1-(12)^{(4)}-(34)^{(4)}+(12)(34)^{(4)}}{(z_{4,1}-z_{4,2})(z_{4,3}-z_{4,4})}\\
\partial^{(3)}_{(13)}&= \frac{1-(12)^{(3)}-(23)^{(3)}-(13)^{(3)} +(123)^{(3)}+(132)^{(3)}}{\prod_{1\leq r<s\leq 3}(z_{j,r}-z_{j,s})} 
\end{align*}

Thus, $D^2$ acts by $p\partial_{\tau}qp\partial_{\tau}q$.  Since $\tau$ is the longest element of a parabolic subgroup, we find that 
\[p\partial_{\tau}qp\partial_{\tau}q=p\partial_{\tau}q\cdot \partial_{\tau}(qp)\]
Thus, we have that $D^2= \partial_{\tau}(qp)D$.  Before calculating $\partial_{\tau}(qp)$, let us note that it is not too hard to rephrase this calculation in terms of the diagram above: The polynomial $qp$ will be obtained by resolving all the bigons involving different colors on the center of the diagram (in terms of the dots), and $\partial_{\tau}(qp)$ will be obtained by resolving the bigons of the same color using \cite[(2.20)]{khovanovExtendedGraphical2012} on the strands with label 3, those with label 5, and on the two crossing groups of the strands with label 4.

First, note that any term that includes $z_{2,*}$ or $z_{6,*}$ will be killed by this divided difference operator, so we can simply set these variables to 0.  After this substitution, we have
\begin{multline*}
	\partial^{(4)}_{(12)(34)}(pq)=z_{3,1}z_{5,2}z_{5,4}z_{3,3}\\ \cdot \partial^{(4)}_{(12)(34)}\big((z_{4,2}^2-(z_{5,2}+z_{3,1})z_{4,2}+z_{5,2}z_{3,1})(z_{4,3}^2-(z_{5,4}+z_{3,3})z_{4,3}+z_{5,4}z_{3,3})\big)
\end{multline*}
Furthermore, note that only terms with degree 3 in the variables $z_{5,*}$ and $z_{3,*}$ will have nonzero image, so we have
\begin{align*}
	\partial_{\tau}(qp)&= -\partial^{(5)}_{(24)}\partial^{(3)}_{(13)}(z_{3,1}z_{5,2}z_{5,4}z_{3,3}(z_{5,2}+z_{3,1})(z_{5,4}+z_{3,3}))\\
	&= -\partial^{(5)}_{(24)}(z_{5,2}^2z_{5,4})\partial^{(3)}_{(13)}(z_{3,1}z_{3,3}^2))-\partial^{(5)}_{(24)}(z_{5,2}z_{5,4}^2)\partial^{(3)}_{(13)}(z_{3,1}^2z_{3,3})\\
	&=2
\end{align*}
Thus, we have that $D^2=2D$.  In particular, $D/2$ and $1-D/2$ are orthogonal idempotents.  

The same calculation addresses any value of $n>6$ by considering the diagram where the strands with labels $\leq 6$ trace out $D$, and all the others are straight vertical.  Let us abuse notation and also denote this diagram $D$.  

Let $L_0$ be the unique finite-dimensional $U(\mathfrak{gl}_n)$-module such that $\ell_0=\Wei_{\gamma}(L_0)\neq 0$.  The image $\ell_0$ is a one-dimensional module over $U^{(1)}_\gamma$ killed by all elements of non-zero degree. Of course, we have that $D\Wei_{\gamma}(L_0)=0$ since $D$ factors through weight spaces that don't correspond to Gelfand-Tsetlin patterns, which thus have trivial weight spaces for $L_0$.  However, since $D/2$ is idempotent, we must have a simple $U^{(1)}_\gamma$-module $\ell_1$ such that $D\ell_1\neq 0$; in fact, any simple quotient of the projective $U^{(1)}_\gamma D$ will work.  There is a unique corresponding simple $U(\mathfrak{gl}_n)$-module $L_1$ such that $\Wei_{\gamma}(L_1)= \ell_1$.  The module $L_1$ is necessarily infinite-dimensional, since $L_0$ is the unique finite-dimensional module with this infinitesimal character.  
 \end{proof}
 
 In the case $n=6$, the algebra $U^{(1)}_\gamma$ has no elements of negative degree, so every positive degree element lies in the Jacobson radical.  The quotient $\bar U^{(1)}_\gamma$ by the ideal of positive degree elements is two-dimensional and is spanned by the orthogonal idempotents $D/2$ and $1-D/2$.  That is, $\bar U^{(1)}_\gamma\cong \C\oplus \C$.  This shows that $U^{(1)}_\gamma$ has exactly two simple modules, which are distinguished by whether $D$ acts by zero.  The simple $\ell_0$ on which $D$ acts trivially corresponds to a finite-dimensional $U(\mathfrak{gl}_6)$-module $L_0$ and 
  the simple $\ell_0$ on which $1-D/2$ acts trivially is an infinite-dimensional module $L_1$.  Recall that for a word $\Bi$, the canonical module $C(\Bi)$ is the unique simple quotient of the submodule of the polynomial representation of the KLRW algebra generated by the image of $e(\Bi)$; the canonical module for $U(\mathfrak{gl}_n)$ is the corresponding simple Gelfand-Tsetlin module.  It's easy to check that $L_0$ is the canonical module of the word $w(\la)$, but we can also use this language to describe $L_1$
 \begin{Lemma} The module $L_1$ is the canonical module for the words  
 	\begin{align*}
 		\Bi &=(6,5,2,4,4,1,6,3,3,3,2,6,5,5,5,4,4,6,6,5,6)\\
 		\Bi'&=(2,1,3,4,5,4,3,5,6,5,4,3,2,6,6,5,4,6,6,5,6)
 	\end{align*}
 	and under the bijection of Theorem \ref{th:good-words}, the word $\Bi'$ is the corresponding red-good word.  
 \end{Lemma}
 \begin{proof}
 	Divide the diagram $D$ in half by cutting at the line  $y=\frac{1}{2}+\epsilon$ in diagrams $D_1$ above this line and $D_2$ below, so $D=D_1D_2$.  Note that the word $\Bi$ is obtained exactly by reading left to right on this horizontal line.  Consider $D_1 1_{\Bi}\in \mathcal{P}$.  This is non-zero since the proof that $D^2=D$ also showed that $D_2D_11_{\Bi}=\partial_{\tau}(qp)\cdot 1_{\Bi}=2\cdot 1_{\Bi}$.  This shows that the canonical module $C(\Bi)$ is infinite-dimensional (since $\Bi$ is not a Gelfand-Tsetlin pattern) and $\Wei_{\gamma}(C(\Bi))\neq 0$.  Thus, this canonical module must be $L_1$.   
 	 We can find other words with the same canonical module by applying the rules of \cite[Lemma 2.24]{silverthorneGelfandTsetlinModules2024}: 
 \begin{align*}
 	&C(6,5,2,4,4,1,6,3,3,3,2,6,5,5,5,4,4,6,6,5,6)\\
 	&\cong C(2,1,6,5,4,4,3,3,3,2,6,6,5,5,5,4,4,6,6,5,6)\\
 	&\cong C(2,1,4,6,5,4,3,3,3,2,6,6,5,5,5,4,4,6,6,5,6)\\
 	&\cong C(2,1,3,4,3,6,5,4,3,2,6,6,5,5,5,4,4,6,6,5,6)\\
 	&\cong C(2,1,3,5,4,3,5,6,5,4,3,2,6,6,5,4,4,6,6,5,6)\\
 	&\cong C(2,1,3,4,5,4,3,5,6,5,4,3,2,6,6,5,4,6,6,5,6)
 \end{align*}
 This last word is red-good, so by \cite[Th. 2.23]{silverthorneGelfandTsetlinModules2024}, it is the red-good word of this module as desired.  
 \end{proof}

\ifanindex
\IndexOfNotation
\fi 

{\renewcommand{\markboth}[2]{}\printbibliography}
\end{document}